\definecolor{darkred}{RGB}{100,0,0}
\definecolor{darkgreen}{RGB}{0,100,0}
\definecolor{darkblue}{RGB}{0,0,150}
\newtheorem{thm}{Theorem}
\newtheorem{prp}{Proposition}
\newtheorem{lem}{Lemma}
\newtheorem{cor}{Corollary}
\theoremstyle{remark}
\newtheorem{rem}{Remark}
\def\beq{\begin{equation}}
\def\eeq{\end{equation}}
\def\beqn{\begin{eqnarray*}}
\def\eeqn{\end{eqnarray*}}
\def\bitem{\begin{itemize}}
\def\eitem{\end{itemize}}
\def\benum{\begin{enumerate}}
\def\eenum{\end{enumerate}}
\def\bmult{\begin{multline*}}
\def\emult{\end{multline*}}
\def\bcenter{\begin{center}}
\def\ecenter{\end{center}}
\newcommand{\thmref}[1]{Theorem~\ref{thm:#1}}
\newcommand{\prpref}[1]{Proposition~\ref{prp:#1}}
\newcommand{\corref}[1]{Corollary~\ref{cor:#1}}
\newcommand{\lemref}[1]{Lemma~\ref{lem:#1}}
\newcommand{\secref}[1]{Section~\ref{sec:#1}}
\newcommand{\figref}[1]{Figure~\ref{fig:#1}}
\DeclareMathOperator{\tr}{tr}
\def\cB{\mathcal{B}}
\def\cC{\mathcal{C}}
\def\cH{\mathcal{H}}
\def\cN{\mathcal{N}}
\def\cS{\mathcal{S}}
\def\cV{\mathcal{V}}
\def\bA{\mathbf{A}}
\def\bB{\mathbf{B}}
\def\bF{\mathbf{F}}
\def\bI{\mathbf{I}}
\def\bR{\mathbf{R}}
\def\bW{\mathbf{W}}
\newcommand\bSigma{{\boldsymbol\Sigma}}
\newcommand\bGamma{{\boldsymbol\Gamma}}
\def\bbE{\mathbb{E}}
\def\bbI{\mathbb{I}}
\def\bbN{\mathbb{N}}
\def\bbP{\mathbb{P}}
\def\bbR{\mathbb{R}}
\def\bbZ{\mathbb{Z}}
\newcommand{\E}{\operatorname{\mathbb{E}}}
\renewcommand{\P}{\operatorname{\mathbb{P}}}
\newcommand{\Var}{\operatorname{Var}}
\newcommand{\Cov}{\operatorname{Cov}}
\newcommand{\Cor}{\operatorname{Cor}}
\newcommand{\pr}[1]{\mathbb{P}\left\{#1\right\}}
\newcommand{\var}[1]{\operatorname{Var}\left(#1\right)}
\newcommand\indep{\protect\mathpalette{\protect\independenT}{\perp}}
\def\independenT#1#2{\mathrel{\rlap{$#1#2$}\mkern2mu{#1#2}}}
\def\iid{\stackrel{\rm iid}{\sim}}
\def\eps{\epsilon}
\def\defeq{\stackrel{\rm def}{=}}
\newcommand{\PROB}{\bbP}
\newcommand{\EXP}{\bbE}
\def\tr{\operatorname{Tr}}
\def\ar{{\rm AR}}
\newcommand{\1}{{\rm 1}\kern-0.24em{\rm I}}
\newcommand{\IND}[1]{\bbI\{ #1 \}}
\def\X{X}
\def\Y{Y}
\begin{document}

\title{Detecting 
Markov Random Fields Hidden in White Noise}
\author{
Ery Arias-Castro\footnote{Department of Mathematics, University of California, San Diego} \and 
S\'ebastien Bubeck\footnote{Department of Operations Research and Financial Engineering, Princeton University} \and
G\'abor Lugosi\footnote{ICREA and Department of Economics, Universitat Pompeu Fabra} \and
Nicolas Verzelen\footnote{(corresponding author) INRA, UMR 729 MISTEA, F-34060 Montpellier, FRANCE}
} 
\date{}
\maketitle

\begin{abstract}
Motivated by change point problems in time series and the detection of textured objects in images, we consider the problem of detecting a piece of a Gaussian Markov random field hidden in white Gaussian noise.  We derive minimax lower bounds and propose near-optimal tests.  
\end{abstract}

\section{Introduction} \label{sec:intro}

Anomaly detection is important in a number of applications, including surveillance and environment monitoring systems using sensor networks, object tracking from video or satellite images, and tumor detection in medical imaging.  The most common model is that of an object or signal of unusually high amplitude hidden in noise.  
In other words, one is interested in detecting the presence of an object in which the mean of the signal is different from that of the background.
We refer to this as the \emph{detection-of-means} problem.
In many situations, anomaly manifests as unusual dependencies in the data. 
This  \emph{detection-of-correlations} problem is the one that we consider in this paper.

\subsection{Setting and hypothesis testing problem} \label{sec:setting}
It is common to model dependencies by a Gaussian random field $\X = (X_i : i \in \cV)$, where $\cV \subset \cV_\infty$ is of size $|\cV| = n$, while $\cV_\infty$ is countably infinite.  
We focus on the important example of a $d$-dimensional integer lattice
\beq \label{lattice}
\cV = \{1, \dots, m\}^d \subset \cV_\infty = \bbZ^d.
\eeq

We formalize the task of detection as the following hypothesis testing problem.
One observes a realization of $\X = (X_i : i \in \cV)$, where the $X_i$'s are known to be standard normal.
Under the null hypothesis $\cH_0$, the $X_i$'s are independent.
Under the alternative hypothesis $\cH_1$, the $X_i$'s are correlated in one of the following ways. 
Let $\cC$ be a class of subsets of $\cV$. 
Each set $S\in \cC$ represents a possible anomalous subset of the components of $\X$.
Specifically, when $S \in \cC$ is the anomalous subset of nodes, each $X_i$ with $i \notin S$ is still independent of all the other variables, while $(X_i : i \in S)$ coincides with $(Y_i : i \in S)$, where $\Y=(Y_i : i \in \cV_\infty)$ is
a stationary Gaussian Markov random field.  
We emphasize that, in this formulation, the anomalous subset $S$ is only known to belong to $\cC$.

We are thus addressing the problem of detecting a region of a Gaussian Markov random field against a background of white noise.  
This testing problem models important detection problems such as the detection of a piece of a time series in a signal and the detection of a textured object in an image, which we describe below. 
Before doing that, we further detail the model and set some foundational notation and terminology.

\subsection{Tests and minimax risk} \label{sec:tests}
We denote the distribution of $\X$ under $\cH_0$ by $\PROB_0$.
The distribution of the zero-mean stationary Gaussian Markov random
field $\Y$ is determined by its covariance operator
$\bGamma=(\bGamma_{i,j} : i,j\in \cV_\infty)$ defined by
$\bGamma_{i,j}=\EXP[Y_i Y_j]$.
We denote the distribution of $\X$ under $\cH_1$ by $\PROB_{S,\bGamma}$ when $S\in \cC$ is the anomalous set and $\bGamma$ is
the covariance operator of the Gaussian Markov random field $\Y$.

A \emph{test} is a measurable function $f: \bbR^\cV \to \{0,1\}$. When $f(\X)=0$, 
the test accepts the null hypothesis and it rejects it otherwise.
The probability of \emph{type I} error of a test $f$ is $\PROB_0\{f(\X)=1\}$.
When $S\in \cC$ is the anomalous set and $\Y$ has covariance operator $\bGamma$, the probability of \emph{type II} error is $\PROB_{S,\bGamma}\{f(\X)=0\}$.
In this paper we evaluate tests based on their \emph{worst-case risks}.
The risk of a test $f$ corresponding to a covariance operator $\bGamma$ and class of sets $\cC$ is defined as 
\beq \label{risk-known-f}
R_{\cC,\bGamma}(f) = \PROB_0\{f(\X)=1\} +  \max_{S \in \cC} \, \PROB_{S,{\bGamma}}\{f(\X)=0\}~.
\eeq
Defining the risk this way is meaningful when the distribution of $\Y$ is known, 
meaning that $\bGamma$ is available to the statistician. 
In this case, the minimax risk is defined as 
\beq \label{risk-known}
R^*_{\cC,\bGamma} = \inf_f R_{\cC,\bGamma}(f)~,
\eeq
where the infimum is over all tests $f$.
When $\bGamma$ is only known to belong to some class of covariance operators $\mathfrak{G}$, it is more meaningful to define the risk of a test $f$ as
\beq \label{risk-unknown-f}
R_{\cC,\mathfrak{G}}(f) = \PROB_0\{f(\X)=1\} +  \max_{\bGamma \in \mathfrak{G}} \max_{S \in \cC} \, \PROB_{S,{\bGamma}}\{f(\X)=0\}~.
\eeq
The corresponding minimax risk is defined as
\beq \label{risk-unknown}
R_{\cC,\mathfrak{G}}^* = \inf_f R_{\cC,\mathfrak{G}}(f)~.
\eeq
In this paper we consider situations in which the covariance operator $\Gamma$ is known
(i.e., the test $f$ is allowed to be constructed using this information) and other situations
when $\Gamma$ is unknown but it is assumed to belong to a class $\mathfrak{G}$.
When $\bGamma$ is known (resp.~unknown), we say that a test $f$ \emph{asymptotically separates the two hypotheses} if $R_{\cC,\bGamma}(f) \to 0$ (resp.~$R_{\cC,\mathfrak{G}}(f) \to 0$), and we say that the hypotheses \emph{merge asymptotically} if $R_{\cC,\bGamma}^* \to 1$ (resp.~$R_{\cC,\mathfrak{G}}^* \to 1$), as $n = |\cV| \to \infty$.  We note that, as long as $\bGamma \in \mathfrak{G}$, $R_{\cC,\bGamma}^* \le R_{\cC,\mathfrak{G}}^*$, and that $R_{\cC,\mathfrak{G}}^* \le 1$, since the test $f \equiv 1$ (which always rejects) has risk equal to $1$.

At a high-level, our results are as follows.  We characterize the minimax testing risk for both known  ($R^*_{\cC,\bGamma}$) and unknown ($R_{\cC,\mathfrak{G}}^*$) covariances when the anomaly is a Gaussian Markov random field. More precisely, we give conditions on $\bGamma$ or $\mathfrak{G}$ enforcing the hypotheses to merge asymptotically so that detection problem is nearly impossible. Under nearly matching conditions, we exhibit tests that asymptotically separate the hypotheses. Our general results are illustrated in the following subsections.

\subsection{Example: detecting a piece of time series} \label{sec:intro-tseries}
As a first example of the general problem described above, consider the case of observing a time series $X_1,\ldots,X_n$. 
This corresponds to the setting of the lattice \eqref{lattice} in dimension $d=1$.
Under the null hypothesis, the $X_i$'s are i.i.d.\ standard normal random variables.
We assume that the anomaly comes in the form of temporal correlations over an (unknown) interval $S = \{i+1, \dots, i+k\}$ of, say, known length $k<n$.  Here, $i\in \{0,1\ldots,n-k\}$ is thus unknown.  Specifically, when $S$ is the anomalous interval, $(X_{i+1}, \dots, X_{i+k}) \sim (Y_{i+1}, \dots, Y_{i+k})$, where
$(Y_i: i \in \bbZ)$ is an autoregressive process of order $h$ (abbreviated $\ar_h$) with zero mean and unit variance, that is,
\beq \label{ARp}
Y_i = \psi_1 Y_{i-1} + \cdots + \psi_h Y_{i-h} + \sigma Z_i, \quad \forall i \in \bbZ,
\eeq
where $(Z_i: i \in \bbZ)$ are i.i.d.\ standard normal random variables, $\psi_1, \dots, \psi_h \in \bbR$ are the coefficients of the process---assumed to be stationary---and $\sigma>0$ is such that $\Var(Y_i) = 1$ for all $i$.  Note that $\sigma$ is a function of $\psi_1, \dots, \psi_h$, so that the model has effectively $h$ parameters. 
It is well-known that the parameters $\psi_1,\ldots,\psi_h$ define a stationary process when the roots of the polynomial $z^p - \sum_{i=1}^p \psi_i z^{p-i}$ in the complex plane lie within the open unit circle. See \cite{MR1093459} for a standard reference on time series.

In the simplest setting $h=1$ and the parameter space for $\psi$ is $(-1,1)$. Then, the hypothesis testing problem is to distinguish
\[
\cH_0: X_1, \dots, X_n \iid \cN(0,1),
\]
versus
\[\cH_1: \exists i \in \{0,1,\ldots,n-k\} \text{ such that }\]
\[X_1, \dots, X_i, X_{i+k+1}, \dots, X_n \iid \cN(0,1) \]
and
$(X_{i+1}, \dots, X_{i+k})$ is independent of $X_1, \dots, X_i, X_{i+k+1}, \dots, X_n$
with
\[
X_{i+j+1} - \psi X_{i+j} \iid \cN(0,1-\psi^2), \quad \forall j \in \{1, \dots, k-1\}\ .
\]
Typical realizations of the observed vector under the null and alternative hypotheses
are illustrated in \figref{timeseries}.  

\begin{figure}[htbp]
\centering
\includegraphics[width=0.9\linewidth]{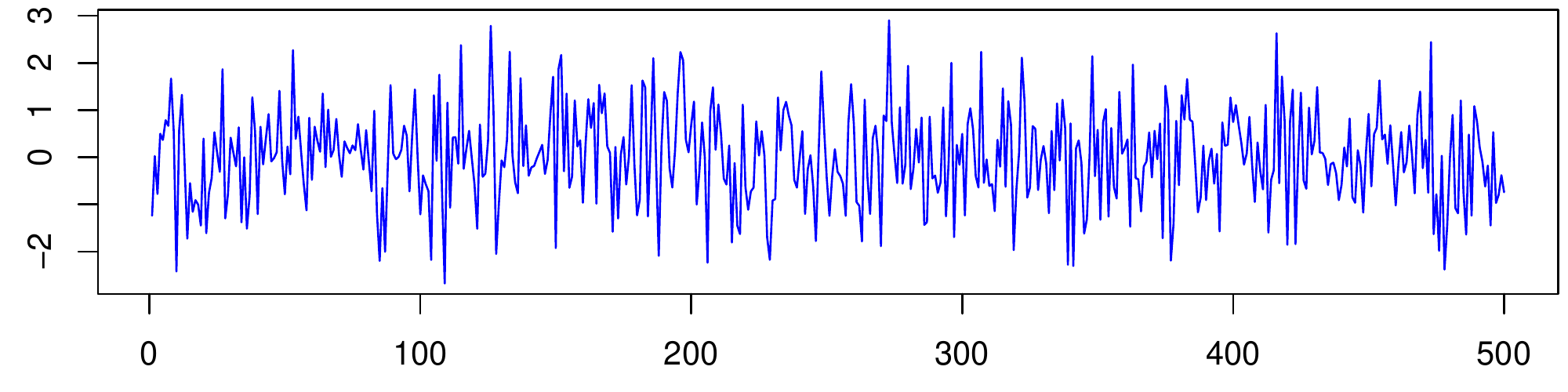} \\ 
\includegraphics[width=0.9\linewidth]{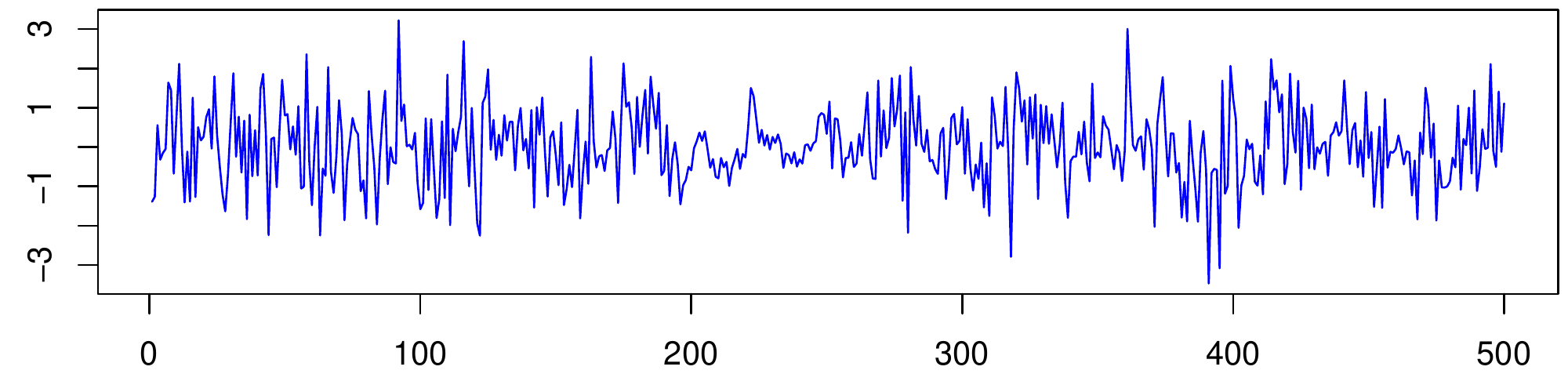}
\caption{Top: a realization of the observed time series under the null hypothesis (white noise).  Bottom: a realization under the alternative with anomalous interval $S = \{201, \dots, 250\}$, assuming an $\ar_1$ covariance model with parameter $\psi = 0.9$.}
\label{fig:timeseries}
\end{figure}

Gaussian autoregressive processes and other correlation models
are special cases of Gaussian Markov random fields, and therefore this setting is a special case of our general framework, with $\cC$ being the class of discrete intervals of length $k$. 
In the simplest case, the length of the anomalous interval is known beforehand.  In more complex settings, it is unknown, in which case $\cC$ may be taken to be the class of all intervals within $\cV$ of length at least $k_{\rm min}$.

This testing problem has been extensively studied in the slightly different context of change-point analysis, where under the null hypothesis $X_1, \dots, X_n$ are generated from an $\ar_h(\psi^0)$ process for some $\psi^0\in \mathbb{R}^h$, while under the alternative hypothesis there is an $i \in \cV$ such that $X_1, \dots, X_i$ and $X_{i+1}, \dots, X_n$ are generated from $\ar_h(\psi^0)$ and $\ar_h(\psi^1)$, with $\psi^0 \ne \psi^1$, respectively.  The order $h$ is often given.  In fact, instead of assuming autoregressive models, nonparametric models are often favored.  See, for example, \cite{MR0269062,MR1331669,springerlink:10.1007/BF00970969,MR1791905,MR2597589,MR2301477,MR809433,MR1200409} and many other references therein.  These papers often suggest maximum likelihood tests whose limiting distributions are studied under the null and (sometimes fixed) alternative hypotheses.  For example, in the special case of $h=1$, such a test would reject $\cH_0$  when $|\hat \psi|$ is large, where $\hat \psi$ is the maximum likelihood estimate for $\psi$.  
In particular, from \cite{MR809433}, we can speculate that such a test can asymptotically separate the hypotheses in the simplest setting described above when $\psi k^\alpha \to \infty$ for some $\alpha < 1/2$ fixed.  
See also \cite{MR2301477,MR2597589} for power analyses against fixed alternatives.

Our general results imply the following in the special case when the anomaly comes in the form of an autoregressive process with unknown parameter $\psi \in \bbR^h$.  We note that the order of the autoregressive model $h$ is allowed to grow with $n$ in this asymptotic result.  

\begin{cor}\label{cor:AR}
Assume $n, k \to \infty$, and that $h = o\big(\sqrt{k/\log(n)}\wedge k^{1/4}\big)$.
Denote by $\mathfrak{F}(h,r)$ the class of covariance operators corresponding to $AR_h$ processes with valid parameter $\psi=(\psi_1,\ldots, \psi_h)$ satisfying $\|\psi\|_2^2\geq r^2$.
Then $R_{\cC,\mathfrak{F}(h,r)}^* \to 1$ when
\beq\label{AR1}
r^2 \leq C_1 \big(\log(n/k)/k + \sqrt{h\log(n/k)}/k\big)\ .
\eeq
Conversely, if $f$ denotes the pseudo-likelihood test of Section~\ref{sec:fisher}, then $R_{\cC,\mathfrak{F}(h,r)}(f) \to 0$ when 
\beq\label{AR2}
r^2 \geq C_2 \big(\log(n)/k + \sqrt{h\log(n)}/k\big)\ .
\eeq
In both cases, $C_1$ and $C_2$ denote numerical constants.
\end{cor}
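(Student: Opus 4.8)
The corollary is a specialization of our general lower bound and of the analysis of the pseudo‑likelihood test of Section~\ref{sec:fisher} to the class $\cC$ of length‑$k$ intervals of $\cV=\{1,\dots,n\}$ (so $|\cC|=n-k+1$) and to $\mathfrak{G}=\mathfrak{F}(h,r)$. The only model‑specific ingredient is a spectral description of the restriction $\bGamma_S$ of the covariance operator of a stationary $\ar_h(\psi)$ process to an interval $S$ of length $k$: by the Markov factorization property $(Y_i:i\in S)$ is centered Gaussian whose precision matrix $\bGamma_S^{-1}$ is banded with bandwidth $h$ and, outside an $O(h)$‑dimensional boundary block, equals the Toeplitz matrix $\sigma^{-2}\,\Theta^\top\Theta$, where $\Theta$ is lower triangular with unit diagonal and $-\psi_j$ on its $j$th subdiagonal and $\sigma^2=\sigma^2(\psi)$ is the innovation variance. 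I would first record the elementary expansion $\sigma^2=1-\|\psi\|_2^2+O(\sqrt{h}\,\|\psi\|_2^3)$ together with its consequences
\[
\|\bGamma_S-\mathrm{I}\|_{\rm F}^2 \;\asymp\; \|\bGamma_S^{-1}-\mathrm{I}\|_{\rm F}^2 \;\asymp\; k\,\|\psi\|_2^2, \qquad D(\PROB_0\,\|\,\PROB_{S,\bGamma}) \;\asymp\; k\,\|\psi\|_2^2,
\]
the $O(h)$‑dimensional boundary block and the higher‑order‑in‑$\psi$ terms contributing corrections of smaller order under $h=o\big(\sqrt{k/\log n}\wedge k^{1/4}\big)$.

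\textbf{Lower bound.} To obtain \eqref{AR1} I would run the second‑moment (Ingster--Suslina) argument on the sub‑family of the $\lfloor n/k\rfloor$ \emph{pairwise disjoint} length‑$k$ intervals, under which the single‑interval likelihood ratios are $\PROB_0$‑independent, so that the second moment of the corresponding mixture equals $1$ plus $k/n$ times an average over $\psi$ of single‑interval $\chi^2$‑divergence contributions. I would then combine two priors on $\psi$. A prior degenerate at a worst‑case $\psi^\star$ with $\|\psi^\star\|_2^2=r^2$, together with $\log(1+\chi^2(\PROB_{S,\bGamma}\,\|\,\PROB_0))\asymp\|\bGamma_S-\mathrm{I}\|_{\rm F}^2\asymp kr^2$, yields merging once $kr^2\lesssim\log(n/k)$. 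A prior uniform on the sphere $\{\|\psi\|_2=r\}\subset\bbR^h$ instead contributes a factor controlled by $\EXP_{\psi,\psi'}\big[\exp(\mathrm{tr}(\Delta_\psi\Delta_{\psi'}))\big]$ for independent $\psi,\psi'$, writing $\Delta_\psi$ for $\bGamma_S-\mathrm{I}$; since $\mathrm{tr}(\Delta_\psi\Delta_{\psi'})\asymp k\langle\psi,\psi'\rangle$ and $\langle\psi,\psi'\rangle$ concentrates like $\cN(0,r^4/h)$, this yields merging once $k^2r^4/h\lesssim\log(n/k)$. Taking whichever prior is more favourable gives merging as soon as $r^2\lesssim\log(n/k)/k+\sqrt{h\log(n/k)}/k$, i.e.\ \eqref{AR1}; the restriction $h=o(k^{1/4})$ is what makes the cubic‑in‑$\psi$ and boundary remainders negligible relative to $kr^2$ at this threshold.

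\textbf{Upper bound.} For each length‑$k$ interval $S$, the pseudo‑likelihood test of Section~\ref{sec:fisher} forms the pseudo‑likelihood statistic for the $h$‑dimensional parameter $\psi$, which up to lower‑order terms is a nonnegative quadratic form in the lag‑$1,\dots,h$ sample autocovariances $\widehat\gamma_j(S)=\sum_{i\in S}X_iX_{i-j}$ (the pseudo‑likelihood scores at $\psi=0$); under $\cH_0$ it is approximately $\chi^2_h$‑distributed, uniformly over $S$. Under $\cH_1$ with anomalous interval $S$ and parameter $\psi$ it is noncentral with noncentrality $\asymp k\|\psi\|_2^2\ge kr^2$, because the lag‑$j$ autocorrelation of an $\ar_h(\psi)$ process equals $\psi_j+O(\sqrt{h}\,r^2)$. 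Comparing the scan maximum of $\asymp n$ such statistics to the deviations of its null distribution, which are of order $\sqrt{h\log n}+\log n$, shows that the test asymptotically separates the hypotheses once $kr^2\gtrsim\sqrt{h\log n}+\log n$, i.e.\ \eqref{AR2}; here $h=o(\sqrt{k/\log n})$ guarantees that the $\chi^2_h$ approximation, the control of the noncentrality, and the weak‑dependence large‑deviation estimates over the $\asymp n$ windows are all uniformly valid over $\|\psi\|_2\ge r$.

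\textbf{Main obstacle.} The crux is the quantitative, \emph{uniform} control of the truncated precision matrix $\bGamma_S^{-1}$: one must show that, over all valid $\psi$ with $\|\psi\|_2=r$ (resp.\ $\ge r$), the Frobenius distance to $\mathrm{I}$, the Kullback--Leibler and $\chi^2$ divergences, and the noncentrality of the pseudo‑likelihood statistic are all comparable to $kr^2$ to leading order, with the non‑Toeplitz $O(h)$‑dimensional boundary block and the higher‑order terms in $\psi$ absorbed precisely under $h=o\big(\sqrt{k/\log n}\wedge k^{1/4}\big)$. Establishing this uniformity --- in particular for the test of Section~\ref{sec:fisher}, which does not know $\psi$ and hence pays the $\sqrt{h\log n}$ price --- and combining the two priors so as to end up with the \emph{larger} (not the smaller) of the two regimes in \eqref{AR1} are the steps that require the most care.
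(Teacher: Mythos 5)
Your proposal is correct in outline but takes a genuinely different, and far more laborious, route than the paper. The paper's proof of Corollary~\ref{cor:AR} is a short \emph{reduction}: every $\ar_h(\psi)$ process is a stationary Gaussian Markov random field with neighborhood radius $h$, and the explicit bijection \eqref{eq:phi_psi_ar}--\eqref{eq:variance_sigma_ar} between $(\psi,\tau_\psi^2)$ and the GMRF parameter $(\phi,\sigma_\phi^2)$ gives $\|\phi\|_2\asymp\|\psi\|_2$ in the regime $r\le 1/h$ (which \eqref{AR1} and $h=o(k^{1/4})$ guarantee). This yields $R^*_{\cC,\mathfrak{G}(h,r)}\le R^*_{\cC,\mathfrak{F}(h,Cr)}$ and $R_{\cC,\mathfrak{F}(h,r)}(f)\le R_{\cC,\mathfrak{G}(h,r/\sqrt{8})}(f)$, after which the corollary is just Corollary~\ref{cor:lower_hypercube} with $d=1$ and Theorem~\ref{thm:LS1}; the only extra wrinkle is a monotonicity argument ($R_{\cC,\mathfrak{F}(h,r)}(f)\le R_{\cC,\mathfrak{F}(h,h^{-1/2})}(f)$) to cover $r^2\ge 1/h$ in the upper bound. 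You instead re-derive the whole machinery in the AR parameterization: the two-prior second-moment computation you sketch is essentially a specialization of Theorem~\ref{thrm3:non_overlap_GMRF} (your degenerate prior plays the role of Corollary~\ref{cor:non_overlap_GMRF}, your sphere prior the role of the Rademacher prior, and your two-regime bookkeeping reproduces the $\frac{k^2r^4}{h}\wedge kr^2$ dichotomy in the $\cosh$ bound), and your autocovariance-quadratic-form analysis of the scan statistic reproduces the content of Theorem~\ref{thm:LS1}. What the reduction buys is precisely the elimination of your stated ``main obstacle'': the uniform control of $\bGamma_S^{-1}$, the boundary blocks, and the cubic remainders is done once, in the $\phi$-parameterization, in Lemmas~\ref{lem:spectrum_gamma}--\ref{lem:det}. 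One concrete point where your sketch as written would not go through: the claim that the noncentrality is $\asymp k\|\psi\|_2^2$ \emph{uniformly over} $\|\psi\|_2\ge r$ fails for large $\psi$, where the linearizations $\rho_j\approx\psi_j$ and $\|\phi\|_2\asymp\|\psi\|_2$ break down; the paper's Theorem~\ref{thm:LS1} deliberately caps the guaranteed noncentrality at $|S^h|(\|\phi\|_2^2\wedge|\bbN_h|^{-1})$ and handles the large-parameter regime by the monotonicity trick above, and you would need an analogous case split to make the upper bound rigorous.
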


\begin{rem}
In the interesting setting where $k = n^\kappa$ for some $\kappa > 0$ fixed, the lower and upper bounds provided by \corref{AR} match up to a multiplicative constant that depends only on $\kappa$. 
\end{rem}

Despite an extensive literature on the topic, we are not aware of any other minimax optimality result for time series detection.

\subsection{Example: detecting a textured region} \label{sec:intro-images}
In image processing, the detection of textured objects against a textured background is relevant in a number of applications, such as in the detection of local fabric defects in the textile industry by automated visual inspection~\citep{4418522}, the detection of a moving object in a textured background~\citep{yilmaz2006object,Kim2005172}, the identification of tumors in medical imaging~\citep{1229852,breast-tumor}, the detection of man-made objects in natural scenery~\citep{10.1109/CVPR.2003.1211345}, the detection of sites of interest in archeology \citep{litton} and of weeds in crops \citep{MR1959083}.  In all these applications, the object is generally small compared to the size of the image.  

Common models for texture include Markov random fields~\citep{4767341}
and joint distributions over filter banks such as wavelet
pyramids~\citep{531803,springerlink:10.1023/A:1026553619983}.  We
focus here on textures that are generated via Gaussian Markov random
fields \citep{1164641,springerlink:10.1023/A:1007925832420}.  Our goal
is to detect a textured object hidden in white noise.  
For this discussion, we place ourselves in the lattice setting \eqref{lattice} in dimension $d=2$.
Just like before, under $\cH_0$, the $(X_i : i\in \cV)$ are independent standard
normal random variables. Under $\cH_1$, 
when the region $S \subset \cV$ is anomalous, the $(X_i : i\notin S)$ are still i.i.d.\ standard normal, while $(X_i : i \in S) \sim (Y_i : i \in S)$, where $(Y_i: i \in \bbZ^2)$ is such that
for each $i\in \bbZ^2$, the conditional distribution of $Y_i$ given the rest of the variables $Y^{(-i)} := (Y_j : j \ne i)$ is normal with mean
\beq \label{markov-lattice}
\sum_{(t_1,t_2)\in [-h,h]^2\setminus \{(0,0)\}} \phi_{t_1, t_2} Y_{i + (t_1, t_2)}
\eeq
and variance $\sigma_{\phi}^2$, where the $\phi_{t_1,t_2}$'s are the coefficients of the process and $\sigma_{\phi}$ is such that $\Var(Y_i) = 1$ for all $i$. 
The set of valid parameters $\phi$ is defined in Section~\ref{sec:prelim_gmrf}. A simple sufficient condition is $\|\phi\|_1= \sum_{(t_1,t_2)\in [-h,h]^2\setminus \{(0,0)\}} |\phi_{t_1, t_2}|<1$. 
In this model, the \emph{dependency neighborhood} of $i \in \bbZ^2$ is $i + [-h,h]^2 \cap \bbZ^2$.
One of the simplest cases is when $h=1$ and $\phi_{t_1, t_2} = \phi$ when $(t_1,t_2) \in \{(\pm1,0), (0,\pm1)\}$ for some $\phi \in (-1/4, 1/4)$, and the anomalous region is a discrete square; see Figure \ref{fig:texture} for a realization of the resulting process.

This is a special case of our setting.
While intervals are natural in the case of time series, squares are rather restrictive models of anomalous regions in images.  We consider instead the ``blob-like'' regions (to be defined later) that include convex and star-shaped regions.  

\begin{figure}[htbp]
\centering
\includegraphics[width=.30\linewidth]{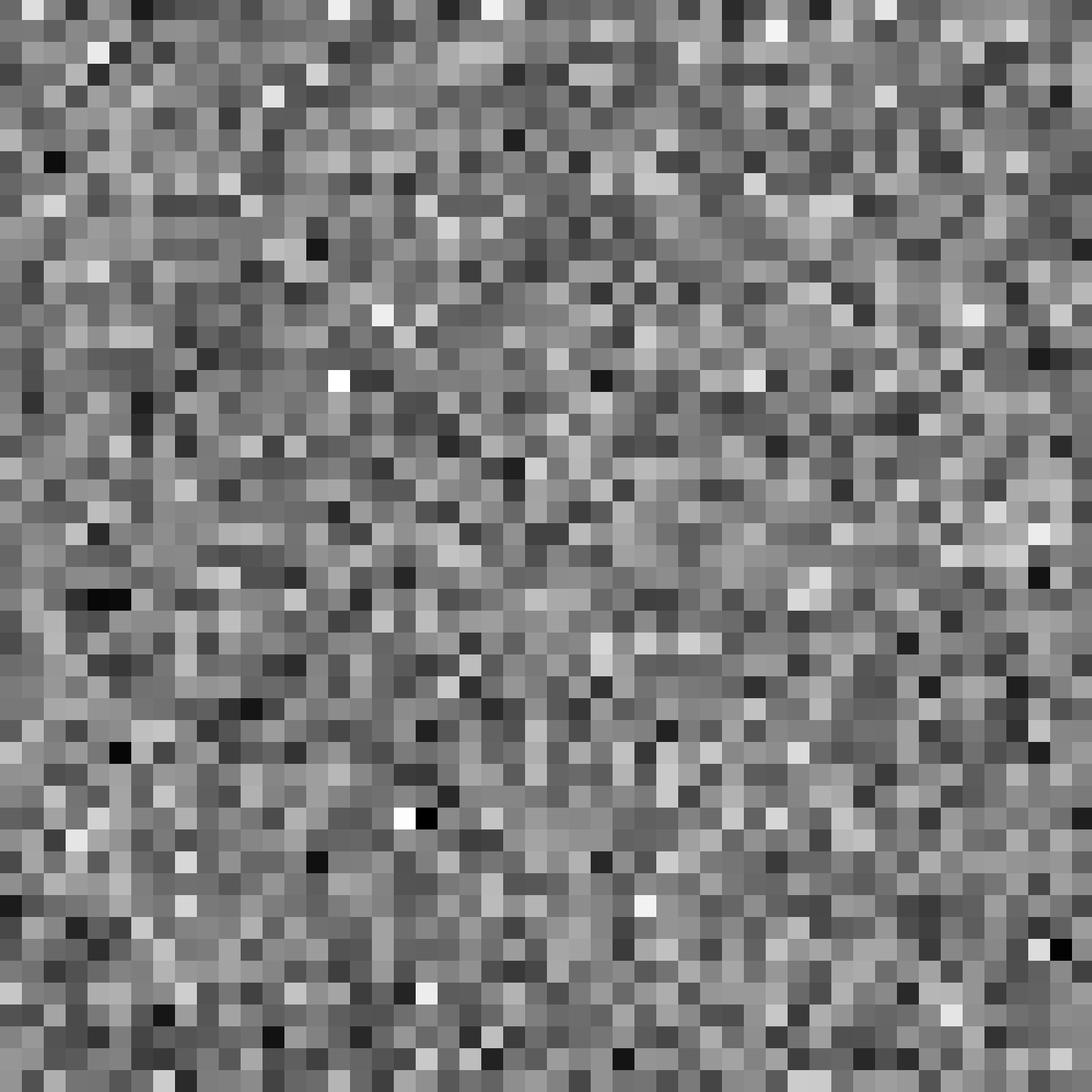} \ 
\includegraphics[width=.30\linewidth]{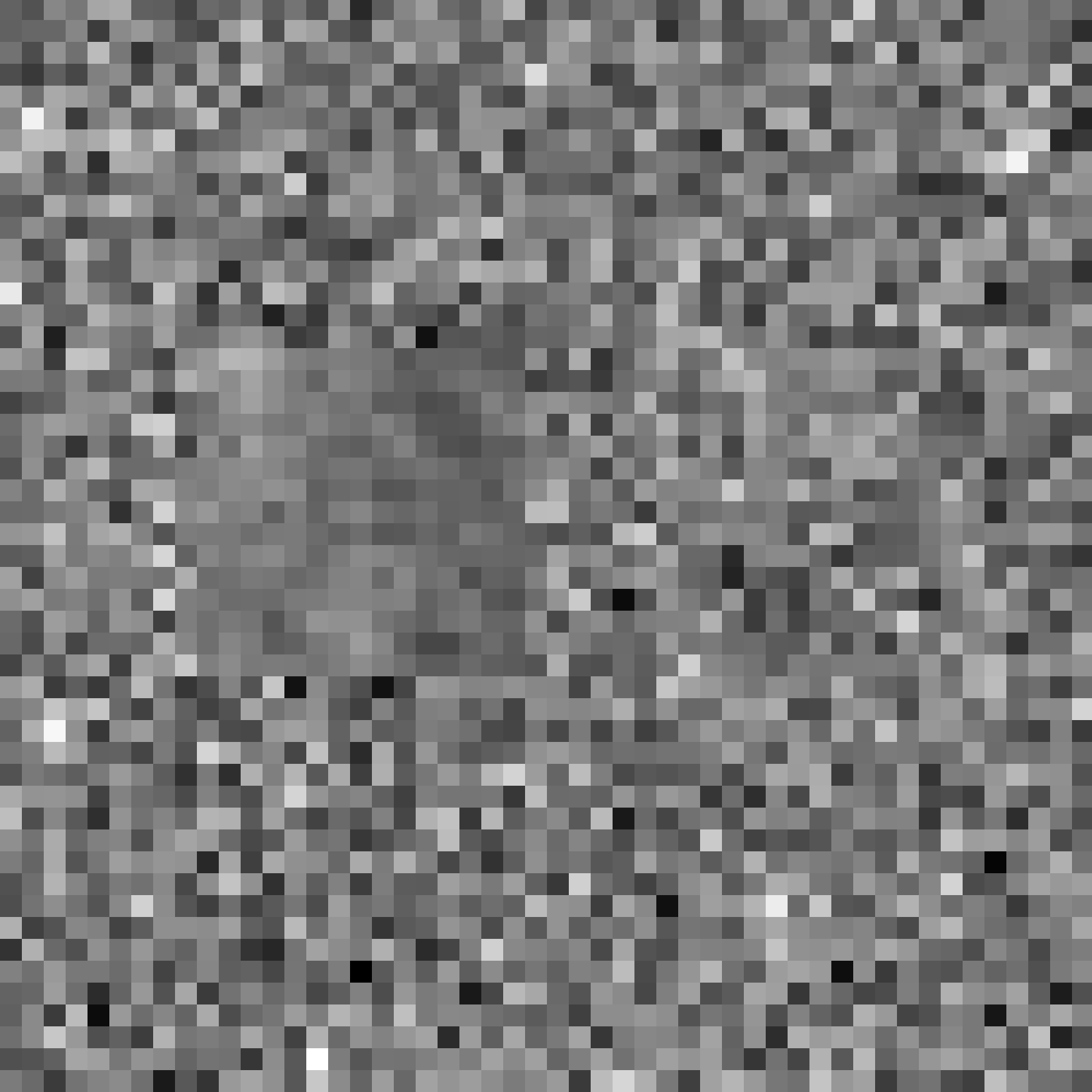} \ 
\caption{Left: white noise, no anomalous region is present.  Right: a squared anomalous region is present.  In this example on the $50 \times 50$ grid, the anomalous region is a $15 \times 15$ square piece from a Gaussian Markov random field with neighborhood radius $h=1$ and coefficient vector $\phi_{t_1, t_2} = \phi := \frac14(1-10^{-4})$ when $(t_1,t_2) \in \{(\pm1,0), (0,\pm1)\}$, and zero otherwise.}   
\label{fig:texture}
\end{figure}

A number of publications address the related problems of texture classification \citep{kervrann1995markov,springerlink:10.1023/A:1007925832420,varma05} and texture segmentation \citep{jain1991unsupervised,buhmann98,MR1966045,sharon,malik-belongie-al}.  In fact, this literature is quite extensive.  Only very few papers address the corresponding change-point problem \citep{shahrokni,palenichka:158} and we do not know of any theoretical results in this literature. 
Our general results (in particular, Corollary \ref{cor:lower_hypercube}) imply the following.

\begin{cor}\label{cor:image}
Assume $n, k \to \infty$, and that $h = o\big(\sqrt{k/\log(n)}\wedge k^{1/5}\big)$. 
Denote by $\mathfrak{G}(h,r)$ the class of covariance operators corresponding to stationary Gaussian Markov Random Fields with valid parameter (see Section~\ref{sec:prelim_gmrf} for more details) $\phi=(\phi_{i,j})_{(i,j)\in \{-h,\ldots, h\}^2\setminus \{0\}}$ satisfying $\|\phi\|_2^2\geq r^2$.
Then $R_{\cC,\mathfrak{G}(h,r)}^* \to 1$ when
\beq\label{CAR1}
r^2 \leq C_1\left[ \frac{\log(n/k)}{k} + \frac{\sqrt{h^2\log(n/k)}}{k}\right]\ .
\eeq
Conversely, if $f$ denotes the pseudo-likelihood test of Section~\ref{sec:fisher}, then $R_{\cC,\mathfrak{G}(h,r)}(f) \to 0$ when 
\beq\label{CAR2}
r^2 \geq C_2 \left[ \frac{\log(n/k)}{k} + \frac{\sqrt{h^2\log(n/k)}}{k}\right]\ .
\eeq
In both cases, $C_1$ and $C_2$ denote positive numerical constants.
\end{cor}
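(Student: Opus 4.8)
The plan is to obtain Corollary~\ref{cor:image} as the two-dimensional instance of the general theory: the impossibility part~\eqref{CAR1} from the hypercube lower bound of Corollary~\ref{cor:lower_hypercube}, and the achievability part~\eqref{CAR2} from the analysis of the pseudo-likelihood test of Section~\ref{sec:fisher}. The one genuinely new ingredient is the dictionary turning the condition $\|\phi\|_2^2\ge r^2$, stated in the conditional-regression parametrization~\eqref{markov-lattice}, into the ``effective signal strength'' driving those results. First I would record that the precision operator $K_\phi=\bGamma^{-1}$ of the stationary field on $\bbZ^2$ is symmetric and $h$-banded, with diagonal $1/\sigma_\phi^2$ and off-diagonal entries $-\phi_t/\sigma_\phi^2$ at offset $t$, where $\sigma_\phi^2=1-\sum_t\phi_t\bGamma_{0,t}$, and that along the relevant regime $\|\phi\|_2\to 0$, so $\sigma_\phi^2=1+O(\|\phi\|_1^2)=1+o(1)$ and $\bGamma_{0,t}=\phi_t+O(\|\phi\|_2^2)$. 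Since under $\PROB_{S,\bGamma}$ the vector $(X_i:i\in S)$ is $\mathcal{N}(0,\bGamma_S)$ with $\bGamma_S$ the \emph{marginal} covariance, its precision is the Schur complement of $K_\phi$ with respect to $S^c$, which, $K_\phi$ being $h$-banded, agrees with the submatrix $(K_\phi)_S$ outside the layer of nodes within distance $h$ of $\partial S$; for a blob of size $k$ that layer has $O(h\sqrt{k})$ nodes, and a Gaussian computation then gives
\beq\label{eq:plan-translation}
\big\|\bGamma_S-I\big\|_F^2 \;=\; -2\log\det\bGamma_S\,(1+o(1)) \;=\; k\,\|\phi\|_2^2\,(1+o(1)) \;+\; O\!\big(h\sqrt{k}\,\|\phi\|_2^2\big)\ .
\eeq
The hypothesis $h=o(\sqrt{k/\log n})$ makes the higher-order-in-$\phi$ terms in this expansion negligible, and $h=o(k^{1/5})$ makes the boundary term negligible relative to the detection threshold.

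For the lower bound~\eqref{CAR1} I would instantiate Corollary~\ref{cor:lower_hypercube} with the more favorable of two priors, both built on a maximal packing $\mathcal{P}$ of $\cC$ by blobs pairwise at distance $\ge h$, of cardinality $|\mathcal{P}|\asymp n/k$. The first fixes $\phi$ on the sphere $\|\phi\|_2=r$; since distinct blobs of $\mathcal{P}$ involve disjoint, independent coordinate blocks under $\PROB_0$, the $\chi^2$-divergence between the induced mixture and $\PROB_0$ equals $|\mathcal{P}|^{-1}\big(\EXP_0[L_S^2]-1\big)$ with $L_S$ the single-blob likelihood ratio, and the identity $\EXP_0[L_S^2]=\det\big(I-(\bGamma_S-I)^2\big)^{-1/2}=\exp\big(\tfrac12\|\bGamma_S-I\|_F^2(1+o(1))\big)$ together with~\eqref{eq:plan-translation} gives divergence $\to 0$ once $kr^2\lesssim\log(n/k)$. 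The second prior additionally draws $\phi$ uniformly on the sphere of radius $r$ in the $\asymp h^2$-dimensional parameter space; the single-blob term then becomes $\EXP_{\phi,\phi'}\exp\big(\tfrac12\,\mathrm{tr}[(\bGamma_{S,\phi}-I)(\bGamma_{S,\phi'}-I)]\big)=\EXP_{\phi,\phi'}\exp\big(\tfrac12 k\langle\phi,\phi'\rangle(1+o(1))\big)$, and since $\langle\phi,\phi'\rangle$ concentrates at scale $r^2/h$ this is $\exp\big(O(k^2r^4/h^2)\big)$, so the divergence $\to 0$ once $kr^2\lesssim\sqrt{h^2\log(n/k)}$. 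Keeping the better of the two, and noting that $\log(n/k)/k+\sqrt{h^2\log(n/k)}/k$ is within a factor $2$ of the maximum of the two bounds, yields $R^*_{\cC,\mathfrak{G}(h,r)}\to 1$ under~\eqref{CAR1}.

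For the upper bound~\eqref{CAR2} I would analyze the pseudo-likelihood test of Section~\ref{sec:fisher}: for each candidate $S\in\cC$ it compares the best fit of the $\asymp h^2$-dimensional conditional model~\eqref{markov-lattice} over the $h$-interior of $S$ to the null fit, which to first order is the score statistic $T_S\approx \tfrac1k\|U_S\|_2^2$ with $U_S=\big(\sum_{i}X_iX_{i+t}\big)_t$ and $U_S/\sqrt k$ approximately $\mathcal{N}(0,I)$ under $\cH_0$. Thus under $\cH_0$ each $T_S$ is, up to the (thin) overlap between design and response, a $\chi^2$-type statistic with $\asymp h^2$ degrees of freedom, for which a Hanson--Wright/Bernstein bound gives $\PROB_0\{T_S\ge c_1h^2+c_2(\sqrt{h^2t}+t)\}\le e^{-t}$; a union/chaining bound over $\cC$ (with $\log|\cC|\asymp\log(n/k)$ for the blob class) then shows that the threshold $c_1h^2+c_3\big(\sqrt{h^2\log(n/k)}+\log(n/k)\big)$ controls the type~I error. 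Under $\cH_1$ with anomalous region $S_0$, writing $U_{S_0}\approx\mathcal{N}(k\phi,kI)$ gives $T_{S_0}= k\|\phi\|_2^2(1+o(1))+O(h^2)$ plus fluctuations of order $\sqrt{k}\,\|\phi\|_2+h$, where the $o(\cdot)$ bias (from edge effects and $\sigma_\phi^2\neq1$) is controlled by $h=o(\sqrt{k/\log n})\wedge o(k^{1/5})$; this exceeds the threshold with probability tending to $1$ exactly when $kr^2\gtrsim\sqrt{h^2\log(n/k)}+\log(n/k)$, i.e.\ under~\eqref{CAR2}, whence $R_{\cC,\mathfrak{G}(h,r)}(f)\to 0$.

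The step I expect to be the main obstacle is the one that looks most routine: controlling, \emph{uniformly over the entire class of blob-like regions}, the gap between the idealized translation-invariant conditional model on $\bbZ^2$ and the genuine finite restriction to $S$ --- this discrepancy enters both the trace and $\log\det$ expansions behind~\eqref{eq:plan-translation} (where the $\asymp h^2$-dimensional parameter interacts with the surface-to-volume ratio $h\sqrt{k}/k$ of two-dimensional blobs) and the bias of the least-squares fit inside the test statistic, and it is precisely this bookkeeping that forces the hypotheses $h=o(\sqrt{k/\log n})$ and $h=o(k^{1/5})$. Once these error terms are shown to be dominated by the detection threshold $\big(\log(n/k)+\sqrt{h^2\log(n/k)}\big)/k$, the bounds~\eqref{CAR1}--\eqref{CAR2} coincide up to numerical constants, which is the assertion of the corollary.
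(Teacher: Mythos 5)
Your proposal is correct and follows essentially the same route as the paper: the corollary is obtained as the $d=2$ specialization of the general results, with \eqref{CAR1} coming from Corollary~\ref{cor:lower_hypercube} (itself a second-moment bound over a disjoint packing of cubes with a randomized $\phi$ of norm $r$ in the $|\bbN_h|\asymp h^2$-dimensional parameter space) and \eqref{CAR2} from the analysis of the pseudo-likelihood scan test in Theorem~\ref{thm:LS1}. Your auxiliary sketches (two separate priors rather than the single Rademacher prior, and the score-statistic approximation of $T_S$) differ only in presentation from the paper's arguments, and you correctly identify the boundary/finite-restriction bookkeeping as the source of the conditions on $h$.
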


Informally, the lower bound on the magnitude of the coefficient vector $\phi$, namely $r^2$, quantifies the extent to which the variables $Y_i$ are explained by the rest of variables $Y^{(-i)}$ as in \eqref{markov-lattice}. 

Although not in the literature on change-point or object detection,  \cite{anandkumar2009detection} is the only other paper developing theory in a similar context.  It considers a spatial model where points $\{x_i, i \in [N]\}$ are sampled uniformly at random in some bounded region and a nearest-neighbor graph is formed.
On the resulting graph, variables are observed at the nodes.
Under the (simple) null hypothesis, the variables are i.i.d.~zero mean normal.  Under the (simple) alternative, the variables arise from a Gaussian Markov random with covariance operator of the form $\Gamma_{i,j} \propto g(\|x_i - x_j\|)$, where $g$ is a known function.  The paper analyzes the large-sample behavior of the likelihood ratio test.

\subsection{More related work}

As we mentioned earlier, the detection-of-means setting is much more prevalent in the literature.  When the anomaly has no a priori structure, the problem is that of multiple testing; see, for example, \cite{Ingster99,baraud,MR2065195} for papers testing the global null hypothesis.  Much closer to what interests us here, the problem of detecting objects with various geometries or combinatorial properties has been extensively analyzed, for example, in some of our earlier work \citep{maze,combin,cluster} and elsewhere \citep{MR2604703,morel}.  We only cite a few publications that focus on theory.  The applied literature is vast; see \cite{cluster} for some pointers.

Despite its importance in practice, as illustrated by the examples and references given in Sections~\ref{sec:intro-tseries} and~\ref{sec:intro-images}, the detection-of-correlations setting has received comparatively much less attention, at least from theoreticians.  Here we find some of our own work \citep{correlation-detect,multidim}.  
In the first of these papers, we consider a sequence $X_1, \dots, X_n$ of standard normal random variables.  Under the null, they are independent.  Under the alternative, there is a set $S$ in a class of interest $\cC$ where the variables are correlated.  We consider the unstructured case where $\cC$ is the class of all sets of size $k$ (given) and also various structured cases, and in particular, that of intervals.  This would appear to be the same as in the present lattice setting in dimension $d=1$, but the important difference is that that correlation operator $\bGamma$ is not constrained, and in particular no Markov random field structure is assumed.  
The second paper extends the setting to higher dimensions, thus testing whether some coordinates of a high-dimensional Gaussian vector are correlated or not.  
When the correlation structure in the anomaly is arbitrary, the setting overlaps with that of sparse principal component analysis \citep{berthet,cai2013sparse}.
The problem is also connected to covariance testing in high-dimensions; see, e.g., \cite{cai2013optimal}.
We refer the reader to the above-mentioned papers for further references.

\subsection{Contribution and content}

The present paper thus extends previous work on the detection-of-means setting to the detection-of-correlations setting in the (structured) context of detecting signals/objects in time series/images.  The paper also extends some of our own work on the detection-of-correlations to Markov random field models, which are typically much more appropriate in the context of detection in signals and images. 
The theory in the detection-of-correlations setting is more complicated than in the the detection-of-means setting, and in particular deriving exact minimax (first-order) results remains an open problem.  Compared to our previous work on the detection-of-correlations setting, the Markovian assumption makes the problem significantly more complex as it requires handling Markov random fields which are conceptually more complex objects.  As a result, the proof technique is by-and-large novel, at least in the detection literature.

The rest of the paper is organized as follows.
In \secref{prelim} we lay down some foundations on Gaussian Markov Random Fields, and in particular, their covariance operators, and we also derive a general minimax lower bound that is used several times in the paper.
In the remainder of the paper, we consider detecting correlations in a finite-dimensional lattice \eqref{lattice}, which includes the important special cases of time series and textures in images.
We establish lower bounds, both when the covariance matrix is known (Section~\ref{sec:known}) or unknown (Section~\ref{sec:unknown}) and  propose test procedures that are shown to achieve the lower bounds up to multiplicative constants.  In Section~\ref{sec:cubes}, we specialize our general results to specific classes of anomalous regions such as classes of cubes, and more generally, ``blobs.'' 
In \secref{discussion} we outline possible generalizations and further work.
The proofs are gathered in \secref{proofs}.

\section{Preliminaries}
\label{sec:prelim}

In this paper we derive upper and lower bounds for the minimax risk, both when $\bGamma$ is known as in \eqref{risk-known} and when it is unknown as in \eqref{risk-unknown}, the latter requiring a substantial amount of additional work.
For the sake of exposition, we sketch here the general strategy for obtaining minimax lower bounds by adapting the general strategy initiated in \cite{ingster93a} to detection-of-correlation problems. This allows us to separate the technique used to derive minimax lower bounds from the technique required to handle Gaussian Markov random fields.

\subsection{Some background on Gaussian Markov random fields}\label{sec:prelim_gmrf}

We elaborate on the setting described in Sections~\ref{sec:setting} and~\ref{sec:tests}. 
As the process $Y$ is indexed by $\mathbb{Z}^d$, note that all the indices $i$ of $\phi$ and $\bGamma$ are $d$-dimensional. 
Given a positive integer $h$, denote by $\bbN_h$ the integer lattice $\{-h, \dots, h\}^d \setminus\{0\}^d$ with $(2h+1)^d-1$ nodes. For any nonsingular covariance operator $\bGamma$ of a stationary Gaussian Markov random field over $\mathbb{Z}^d$ with unit variance and neighborhood $\bbN_h$, there exists a unique vector $\phi$ indexed by the nodes of $\bbN_h$ satisfying $\phi_i=\phi_{-i}$ such that, for all $i,j\in \bbZ^d$,  
\beq
\bGamma^{-1}_{i,j}/\bGamma^{-1}_{i,i}= 
\begin{cases}
- \phi_{i-j}&\text{ if } 1\leq |i-j|_{\infty}\leq h,\\
1 &\text{ if }i=j,\\
0 & \text{otherwise~,}                                                                          \end{cases}
\label{eq:conditions}
\eeq
where $\bGamma^{-1}$ denotes the inverse of the covariance operator
$\bGamma$. 
Consequently, there exists a bijective map from the collection of
invertible covariance operators of stationary Gaussian Markov random fields over $\mathbb{Z}^d$ with unit variance and neighborhood $\bbN_h$ to some subset $\Phi_h\subset \bbR^{\bbN_h}$. Given $\phi\in \Phi_h$, $\bGamma(\phi)$ denotes the unique covariance operator satisfying $\bGamma_{i,i}=1$ and \eqref{eq:conditions}. 
It is well known that $\Phi_h$ contains the set of vectors $\phi$ whose $\ell_1$-norm is smaller than one, that is,
\[
\{\phi \in \bbR^{\bbN_h} : \|\phi\|_1 < 1\} \subset \Phi_h\ ,
\]
as the corresponding operator $\bGamma^{-1}(\phi)$ is diagonally dominant in that case. 
In fact, the parameter space $\Phi_h$ is characterized by the Fast Fourier Transform (FFT) as follows
\[ \Phi_h=\Big\{\phi:\quad 1+ \sum_{1\leq |i|_{\infty}\leq h  } \phi_i \cos(\langle i,\omega\rangle) >0,\quad \forall \omega \in (-\pi,\pi]^{d}\Big\}\ ,\]
where  and $i\in \mathbb{Z}^d$ and $\langle \cdot , \cdot\rangle$ denotes the scalar product in $\mathbb{R}^d$. 
The interested reader  is referred to \cite[Sect.1.3]{MR1344683} or \cite[Sect.2.6]{MR2130347} for further details and discussions.
For $\phi\in \Phi_h$, define $\sigma^{2}_{\phi}=1/\bGamma _{i,i}^{-1}(\phi)$. 

The correlated process $Y=(Y_i : i\in \bbZ^d)$ is centered Gaussian with covariance operator $\bGamma(\phi)$ is such that, for each $i\in \mathbb{Z}^d$, the conditional distribution of $Y_i$
given the rest of the variables $Y^{(-i)}$ is
\beq\label{eq:conditional_definition}
Y_{i}|Y^{(-i)}  \ \sim \ \cN \Big(\sum_{j\in \bbN_h}
\phi_{j}Y_{i+j}, \sigma^2_{\phi}\Big) \ .
\eeq

Define the \emph{$h$-boundary} of $S$, denoted $\Delta_h(S)$, as the collection of vertices in $S$ whose distance to $\bbZ^d\setminus S$ is at most $h$. 
We also define the \emph{$h$-interior} $S$ as $S^h = S\setminus \Delta_h(S)$. 
If $S\subset \cV$ is a finite set, we denote by $\bGamma_{S}$ the principal submatrix of the covariance operator $\bGamma$ indexed by $S$. If $\bGamma$ is nonsingular, each such submatrix is invertible.

\subsection{A general minimax lower bound}
As is standard, an upper bound is obtained by exhibiting a test $f$ and then upper-bounding its risk---either \eqref{risk-known-f} or \eqref{risk-unknown-f} according to whether $\bGamma$ is known or unknown.
In order to derive a lower bound for the minimax risk, we follow the standard
argument of 
choosing a prior distribution on the class of alternatives and then lower-bounding the minimax risk with the resulting \emph{average risk}.
When $\bGamma$ is known, this leads us to select a prior on $\cC$, denoted by $\nu$, and consider
\beq \label{risk-nu-gamma}
\bar{R}_{\nu,\bGamma}(f) = \PROB_0\{f(\X)=1\} +  \sum_{S\in \cC} \nu(S) \PROB_{S,\bGamma}\{f(\X)=0\} \quad \text{and} \quad \bar{R}^*_{\nu,\bGamma} = \inf_f \bar{R}_{\nu,\bGamma}(f)~.
\eeq
The latter is the \emph{Bayes risk} associated with $\nu$.
By placing a prior on the class of alternative distributions, the alternative hypothesis becomes effectively simple (as opposed to composite).  The advantage of this is that the optimal 
test may be determined explicitly. Indeed, the Neyman-Pearson fundamental lemma implies that the likelihood ratio test $f_{\nu,\bGamma}^*(x) = \IND{L_{\nu,\bGamma}(x) > 1}$, with 
\[L_{\nu,\bGamma} = \sum_{S\in \cC} \nu(S) \frac{{\rm d}\PROB_{S,\bGamma}}{{\rm d}\PROB_0}~,\] 
minimizes the average risk.
In most of the paper, $\nu$ will be chosen as the uniform distribution on the class $\cC$.  In this because the sets in $\cC$ play almost the same role (although not exactly because of boundary effects).

When $\bGamma$ is only known to belong to some class $\mathfrak{G}$ we also need to choose a prior on $\mathfrak{G}$, which we denote by $\pi$, leading to
\beq \label{risk-nu-pi}
\bar{R}_{\nu,\pi}(f) = \PROB_0\{f(\X)=1\} + \sum_{S\in \cC} \nu(S) \int \PROB_{S,\bGamma}\{f(\X)=0\} \pi({\rm d}\bGamma) \quad \text{and} \quad \bar{R}^*_{\nu,\pi} = \inf_f \bar{R}_{\nu,\pi}(f)~.
\eeq
In this case, the likelihood ratio test becomes $f_{\nu,\pi}^*(x) = \IND{L_{\nu,\pi}(x) > 1}$, where 
\[L_{\nu,\pi} = \sum_{S\in \cC} \nu(S) \frac{{\rm d}\PROB_{S,\pi}}{{\rm d}\PROB_0}~, \quad \PROB_{S,\pi} = \int \PROB_{S,\bGamma} \pi({\rm d}\bGamma)~,\] 
minimizes the average risk.

In both cases, we then proceed to bound the second moment of the resulting likelihood ratio under the null.  Indeed, in a general setting, if $L$ is the likelihood ratio for $\P_0$ versus $\P_1$ and $R$ denotes its risk, then \cite[Problem 3.10]{TSH} 
\beq \label{LR-risk-general}
R = 1 - \frac12 \E_0 | L(X) - 1 | \geq 1 - \frac12 \sqrt{\E_0[L(X)^2] -1} \ ,
\eeq
where the inequality follows by the Cauchy-Schwarz inequality.

\begin{rem}
Working with the minimax risk (as we do here) allows us to bypass making an explicit choice of prior, although one such choice is eventually made when deriving a lower bound. 
Another advantage is that the minimax risk is monotone
with respect to the class $\cC$ in the sense that if $\cC'\subset
\cC$, then the minimax risk corresponding to $\cC'$ is at most as
large as that corresponding to $\cC$. This monotonicity does not necessarily hold for the Bayes risk.  See \cite{combin} for a discussion in the context of the detection-of-means problem.
\end{rem}

We now state a general minimax lower bound.  
(Recall that all the proofs are in \secref{proofs}.)
Although the result is stated for a class $\cC$ of disjoint subsets, using the monotonicity of the minimax risk, the result can be used to derive lower bounds in more general settings.  It is particularly useful in the context of detecting blob-like anomalous regions in the lattice.
(The same general approach is also fruitful in the detection-of-means setting.)
We emphasize that this result is quite straightforward given the work flow outlined above.  The technical difficulties will come with its application to the context that interest us here, which will necessitate a good control of \eqref{V_S} below.

Recall the definition \eqref{risk-nu-pi}.

\begin{prp}\label{prp:non_overlap_parametric}
Let $\{\bGamma(\phi): \phi\in \Phi\}$ be a class of nonsingular
covariance operators and let $\cC$ be a class of disjoint subsets of $\cV$.
Put the uniform prior $\nu$ on $\cC$ and let $\pi$ be a prior on $\Phi$. 
Then 
\[
\bar{R}^*_{\nu,\pi} \ge 1 - \frac{1}{2|\cC| } \Big(\sum_{S\in \cC}
V_S\Big)^{1/2}\ , 
\]
where
\beq \label{V_S}
V_S := \EXP_{\pi} \left[\left(\frac{\det(\bGamma_{S}^{-1}(\phi_1))\det(\bGamma_{S}^{-1}(\phi_2))}{\det(\bGamma_{S}^{-1}(\phi_1) + \bGamma_{S}^{-1}(\phi_2) -\bI_S)}\right)^{1/2} \right]\ ,
\eeq
and the expected value is with respect to $\phi_1,\phi_2$ drawn i.i.d.~from the distribution $\pi$. 
\end{prp}

\section{Known covariance} \label{sec:known}

We start with the case where the covariance operator $\bGamma$ is known.  Although this setting is of less practical importance, as this operator is rarely known in applications, we treat this case first for pedagogical reasons and also to contrast with the much more complex setting where the operator is unknown, treated later on.

\subsection{Lower bound} \label{sec:lower-known}

Recall the definition of the minimax risk \eqref{risk-known} and the average risk \eqref{risk-nu-gamma}.  
(Henceforth, to lighten the notation, we replace subscripts in $\bGamma(\phi)$ with subscripts in $\phi$.)  
For any prior $\nu$ on $\cC$, the minimax risk is at least as large as the $\nu$-average risk, $R^*_{\cC,\phi} \ge \bar{R}^*_{\nu,\phi}$, and the following corollary of Proposition~\ref{prp:non_overlap_parametric} provides a lower bound on the latter.

\begin{cor}\label{cor:non_overlap_GMRF}
Let $\cC$ be a class of disjoint subsets of $\cV$ and fix $\phi\in
\Phi_h$ satisfying $\|\phi\|_1<1/2$. 
Then, letting $\nu$ denote the uniform prior over $\cC$, we have
\beq\label{eq:lower_R*_GMRF1}
\bar{R}^*_{\nu,\phi} \geq  1 - \frac{1}{2|\cC| } \left[\sum_{S\in \cC}
  \exp\left(\frac{10 |S|\|\phi\|_2^2}{1- 2\|\phi\|_1}\right) \right]^{1/2}\ .
\eeq
\end{cor}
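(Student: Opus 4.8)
The plan is to invoke Proposition~\ref{prp:non_overlap_parametric} with the prior $\pi$ equal to the point mass $\delta_\phi$ at the given $\phi$. Since $\|\phi\|_1<1/2$ we have $\phi\in\Phi_h$, so $\bGamma(\phi)$ is a nonsingular GMRF covariance and the hypotheses of the proposition are met; with $\pi=\delta_\phi$ the variables $\phi_1,\phi_2$ both equal $\phi$, and the resulting Bayes risk is exactly the average risk $\bar R^*_{\nu,\phi}$ of \eqref{risk-nu-gamma}. Writing $M_S:=\bGamma_S^{-1}(\phi)=\big((\bGamma(\phi))_S\big)^{-1}$, the proposition gives
\[
\bar R^*_{\nu,\phi}\ \ge\ 1-\frac{1}{2|\cC|}\Big(\sum_{S\in\cC}V_S\Big)^{1/2},\qquad V_S=\Big(\frac{\det(M_S)^2}{\det(2M_S-\bI_S)}\Big)^{1/2},
\]
so it suffices to prove the deterministic estimate $2\log\det M_S-\log\det(2M_S-\bI_S)\le 20\,|S|\,\|\phi\|_2^2/(1-2\|\phi\|_1)$ for every finite $S\subset\cV$ (in particular the left-hand side must be finite, i.e.\ $2M_S-\bI_S\succ0$).

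To prepare the spectral ingredients, let $\Phi$ be the self-adjoint $\bbZ^d\times\bbZ^d$ operator with entries $\Phi_{i,j}=\phi_{i-j}\IND{1\le|i-j|_\infty\le h}$, so by \eqref{eq:conditions} $\bGamma^{-1}(\phi)=\sigma_\phi^{-2}(\bI-\Phi)$, with $\|\Phi\|_{\mathrm{op}}\le\|\phi\|_1<\tfrac12$ (it is convolution by $\phi$), zero diagonal, and each row of $\ell^2$-norm $\|\phi\|_2$; let $\Phi_S$ be its $S\times S$ submatrix and $N_S:=(\bGamma^{-1}(\phi))_S=\sigma_\phi^{-2}(\bI_S-\Phi_S)$, which also has zero diagonal and $\|\Phi_S\|_F^2\le|S|\,\|\phi\|_2^2$. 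From the Neumann expansion $\bGamma(\phi)=\sigma_\phi^2(\bI-\Phi)^{-1}=\sigma_\phi^2\sum_{k\ge0}\Phi^k$ and the normalisation $\bGamma_{0,0}(\phi)=1$ one gets $\sigma_\phi^{-2}=\sum_{k\ge0}(\Phi^k)_{0,0}=1+\|\phi\|_2^2+\sum_{k\ge3}(\Phi^k)_{0,0}$, and since $|(\Phi^k)_{0,0}|\le\|\Phi\|_{\mathrm{op}}^{k-2}\|\phi\|_2^2$ for $k\ge2$ this yields $1\le\sigma_\phi^{-2}\le 1+\|\phi\|_2^2/(1-\|\phi\|_1)$. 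Because $\bGamma_S(\phi)$ is a compression of $\bGamma(\phi)$, its eigenvalues are at most $\|\bGamma(\phi)\|_{\mathrm{op}}=\sigma_\phi^2/\lambda_{\min}(\bI-\Phi)\le\sigma_\phi^2/(1-\|\phi\|_1)\le 1/(1-\|\phi\|_1)<2$, so every eigenvalue $\lambda$ of $M_S$ satisfies $\lambda\ge1-\|\phi\|_1>\tfrac12$; hence $2M_S-\bI_S\succ0$ and $2\lambda-1\ge1-2\|\phi\|_1>0$.

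The algebraic identity $\frac{\lambda^2}{2\lambda-1}=1+\frac{(\lambda-1)^2}{2\lambda-1}$, summed over the eigenvalues of $M_S$ with $\log(1+x)\le x$, gives
\[
2\log\det M_S-\log\det(2M_S-\bI_S)=\sum_i\log\frac{\lambda_i^2}{2\lambda_i-1}\le\frac{1}{1-2\|\phi\|_1}\sum_i(\lambda_i-1)^2=\frac{\|M_S-\bI_S\|_F^2}{1-2\|\phi\|_1}\,,
\]
so it remains to bound $\|M_S-\bI_S\|_F^2$ by $O(|S|\,\|\phi\|_2^2)$. This is the step I expect to be the main obstacle: boundary effects make $M_S\neq N_S$, so the Frobenius norm cannot simply be read off the explicit form of $N_S$. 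The way around is the Schur-complement inequality ``conditioning increases precision'', $0\prec M_S\preceq N_S$, which gives $\mathrm{tr}(M_S^2)\le\mathrm{tr}(N_S^2)=\sigma_\phi^{-4}(|S|+\|\Phi_S\|_F^2)\le\sigma_\phi^{-4}|S|(1+\|\phi\|_2^2)$; combining this with $\mathrm{tr}(M_S)\ge|S|$ (each diagonal entry of a precision matrix with unit marginal variances is at least $1$),
\[
\|M_S-\bI_S\|_F^2=\mathrm{tr}(M_S^2)-2\,\mathrm{tr}(M_S)+|S|\ \le\ |S|\big(\sigma_\phi^{-4}(1+\|\phi\|_2^2)-1\big)\ \le\ 8\,|S|\,\|\phi\|_2^2\,,
\]
the last inequality by a direct computation using the bound on $\sigma_\phi^{-2}$ above and $\|\phi\|_2\le\|\phi\|_1<\tfrac12$. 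Combining the last three displays gives $2\log\det M_S-\log\det(2M_S-\bI_S)\le 8|S|\|\phi\|_2^2/(1-2\|\phi\|_1)$---comfortably stronger than required---and substituting into Proposition~\ref{prp:non_overlap_parametric} finishes the proof.
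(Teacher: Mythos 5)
Your proof is correct, and while it shares the paper's skeleton --- take $\pi=\delta_\phi$ in Proposition~\ref{prp:non_overlap_parametric}, expand the resulting log-determinant around the identity, and reduce to a Frobenius-norm bound of order $|S|\,\|\phi\|_2^2$ --- the key estimate is carried out on the other side of the inversion, by a genuinely different argument. The paper rewrites $V_S=\det\big(\bI_S-(\bI_S-\bGamma_S(\phi))^2\big)^{-1/2}$, applies $1/(1-\lambda)\le e^{\lambda/(1-\lambda)}$ to the eigenvalues of $(\bGamma_S(\phi)-\bI_S)^2$, and then bounds $\|\bGamma_S(\phi)-\bI_S\|_F^2\le |S|\sum_{v\neq 0}\gamma_v^2\le 20|S|\,\|\phi\|_2^2$ via the spectral density and Parseval's formula (Lemma~\ref{lem:conditional_variance}). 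You instead stay with the marginal precision $M_S=\bGamma_S^{-1}(\phi)$, use the identity $\lambda^2/(2\lambda-1)=1+(\lambda-1)^2/(2\lambda-1)$ to reduce to $\|M_S-\bI_S\|_F^2$, and control that quantity by the Schur-complement inequality $M_S\preceq N_S=(\bGamma^{-1}(\phi))_S$ together with trace monotonicity of $A\mapsto\tr(A^2)$ on the positive cone, $\tr(M_S)\ge|S|$, and the explicit entries of $N_S$; this neatly sidesteps the boundary issue $M_S\neq N_S$ without ever needing the decay of the correlations $\gamma_v$, and it is purely linear-algebraic (no Fourier analysis). All the individual steps check out: the Neumann-series bounds on $\sigma_\phi^{-2}$, the eigenvalue lower bound $\lambda_{\min}(M_S)\ge 1-\|\phi\|_1>1/2$ guaranteeing $2M_S-\bI_S\succ 0$, and the final numerical estimate $\sigma_\phi^{-4}(1+\|\phi\|_2^2)-1\le 8\|\phi\|_2^2$ under $\|\phi\|_2\le\|\phi\|_1<1/2$. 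Your bound $V_S\le\exp\big(4|S|\,\|\phi\|_2^2/(1-2\|\phi\|_1)\big)$ is in fact sharper than the stated $\exp\big(10|S|\,\|\phi\|_2^2/(1-2\|\phi\|_1)\big)$. The only trade-off is that the paper's route reuses Lemma~\ref{lem:conditional_variance}, which it needs elsewhere anyway, whereas yours is more self-contained. (One cosmetic slip: it is $\Phi_S$, not $N_S$, that has zero diagonal; your subsequent trace computation uses it correctly.)
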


In particular, the corollary implies that, for any fixed $a\in (0,1)$, $R^*_{\cC,\phi} \ge 1-a$ as soon as 
\beq\label{eq:lower_R*_GMRF1-2}
\frac{\|\phi\|_2^2}{1-2\|\phi\|_1} \leq  \min_{S\in \cC}\frac{\log\left(4|\cC|/a^2\right)}{10|S| }~.
\eeq
Furthermore, the hypotheses merge asymptotically (i.e., $R^*_{\cC,\phi} \to 1$) when
\beq \label{nonoverlap3}
 \log(|\cC|) - \frac{10\|\phi\|_2^2}{1- 2\|\phi\|_1}\max_{S \in \cC} |S| \to \infty~. 
\eeq

\begin{rem}
The condition $\|\phi\|_1<1/2$ in \corref{non_overlap_GMRF} is technical and likely an artifice of our proof method. This condition arises from the term $\det^{-1/2}(2\bGamma_S(\phi)-\bI_S)$ in $V_S$ in \eqref{V_S}.  For this determinant to be positive, the smallest eigenvalue of $\bGamma_S(\phi)$ has to be larger than $1/2$, which in turn is enforced by  $\|\phi\|_1<1/2$.  In order to remove, or at least improve on this constraint, we would need to adopt a more subtle approach than applying the Cauchy-Schwarz inequality in \eqref{LR-risk-general}.  We did not pursue this as typically one is interested in situations where $\phi$ is small --- see, for example, how the result is applied in \secref{cubes}. 
\end{rem}

\subsection{Upper bound: the generalized likelihood ratio test} \label{sec:glrt}

When the covariance operator $\bGamma(\phi)$ is known, the generalized likelihood ratio test rejects the null hypothesis for large values of 
\[
\max_{S \in \cC} \ X_S^\top (\bI_S - \bGamma_S^{-1}(\phi)) X_S~.
\]
We use instead the statistic
\beq \label{U}
U(X) = \max_{S \in \cC} \ \frac{X_S^\top (\bI_S - \bGamma_S^{-1}(\phi)) X_S - \tr(\bI_S - \bGamma_S^{-1}(\phi))}{\|\bI_S - \bGamma_S^{-1}(\phi)\|_F\sqrt{\log(|\cC|)}+ \|\bI_S - \bGamma_S^{-1}(\phi)\|\log(|\cC|)}~,
\eeq
which is based on the centering and normalization the statistics $X_S^\top (\bI_S - \bGamma_S^{-1}(\phi)) X_S$ where $S\in \cC$.

In the following result, we implicitly assume that $|\cC| \to \infty$, which is  the most interesting case.

\begin{prp} \label{prp:glrt}
Assume that $\phi\in \Phi_h$ satisfies  $\|\phi\|_1 \le \eta < 1$ and that $|S^h|\geq |S|/2$. 
The test $f(x)=\IND{U(x) > 4}$ has risk $R_{\cC,\phi}(f) \le 2/|\cC|$ when
\beq \label{glrt4}
\|\phi\|_2^2 \min_{S \in \cC} |S| \ge C_0 \log(|\cC|)\ ,
\eeq
where $C_0 > 0$ only depends on the dimension $d$ of the lattice and $\eta$. 
\end{prp}

Comparing with Condition \eqref{nonoverlap3}, we see that condition \eqref{glrt4} matches (up to constants) the minimax lower bound, so that (at least when $\|\phi\|_1 < 1/2$) the normalized generalized likelihood ratio test based on \eqref{U} is asymptotically minimax up to a multiplicative constant. 
The $\ell_1$-norm  $\|\phi\|_1$ arises in the proof of Corollary \ref{cor:non_overlap_GMRF} when bounding the largest eigenvalue of $\bGamma(\phi)$ (see Lemma~\ref{lem:spectrum_gamma}).

\section{Unknown covariance} \label{sec:unknown}

We now consider the case where the covariance operator $\bGamma(\phi)$ of the anomalous Gaussian Markov random field is unknown.  We therefore start by defining a class of covariance operators via a class of vectors $\phi$.
Given a positive integer $h>0$ and some $r>0$, define 
\beq\label{eq:definition_phi_h_r}
\Phi_h(r) := \{\phi\in \Phi_h,\ \|\phi\|_2 \ge r\}\ ,
\eeq
and let 
\beq\label{eq:G}
\mathfrak{G}(h,r) := \{\bGamma(\phi) : \phi \in \Phi_h(r)\}~,
\eeq
which is the class of covariance operators corresponding to stationary Gaussian Markov Random Fields with parameter in the class \eqref{eq:definition_phi_h_r}.

\subsection{Lower bound} \label{sec:lower-unknown}

The theorem below establishes a lower bound for the risk following the approach outlined in \secref{prelim}, which is based on the choice of a suitable prior $\pi$ on $\Phi_h$, defined as follows. 
By symmetry of the elements of $\Phi_h$, one can fix a sublattice
$\bbN'_h$  of size $|\bbN_h|/2$ such that any $\phi\in\Phi_h$ is
uniquely defined (via symmetry) by its restriction to $\bbN'_h$. Choose the
distribution $\pi$ such that $\phi\sim \pi$ is the unique extension to
$\bbN_h$ of the random vector $ r|\bbN_h|^{-1/2}\xi$, where the
coordinates of the random vector $\xi$---indexed by $\bbN'_h$---are
i.i.d.\ Rademacher random variables (i.e., symmetric $\pm 1$-valued
random variables). Note that, if $r|\bbN_h|<1$, $\pi$ is acceptable   since it concentrates on the set
$\{\phi\in \Phi_h,\ \|\phi\|_2 = r\} \subset \Phi_h(r)$. 
Recall the definition of the minimax risk \eqref{risk-unknown} and the average risk \eqref{risk-nu-pi}.  
As before, for any priors $\nu$ on $\cC$ and $\pi$ on $\Phi_h(r)$, the minimax risk is at least as large as the average risk with these priors, $R^*_{\cC,\mathfrak{G}(h,r)} \ge \bar{R}^*_{\nu,\pi}$, and the following (much more elaborate) corollary of Proposition~\ref{prp:non_overlap_parametric} provides a lower bound on the latter. 

\begin{thm}
\label{thrm3:non_overlap_GMRF}
There exists a constant $C_0>0$ such that the following holds. Let $\cC$
be a class of disjoint subsets of $\cV$ and let $\nu$ denote the uniform prior over $\cC$.  Let $a\in (0,1)$
and assume that the neighborhood size $|\bbN_h|$ satisfies
\beq\label{eq:condition_Nh}
|\bbN_h|\leq  \min_{S\in \cC}\left[\frac{|S|}{\log\left(|\cC|/a\right)}
\bigwedge |S|^{2/5}\log^{1/5}\left(|\cC|/a\right)
\bigwedge \left(\frac{|S|}{|\Delta_{2h}(S)|}\right)^2\log^{-1/6}\left(|\cC|/a\right)\right]\ .
\eeq
Then 
$
\bar{R}^*_{\nu,\pi} \ge 1-a
$
as soon as 
\beq\label{eq:condition_r2_lower}
r^2 \max_{S \in \cC} |S| \leq C_0 \left[\sqrt{|\bbN_h|\log\left(|\cC|/a\right)} \bigvee \log\left(|\cC|/a\right) \right]\ .
\eeq
\end{thm}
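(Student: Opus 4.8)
The plan is to apply Proposition~\ref{prp:non_overlap_parametric} with the Rademacher-type prior $\pi$, which reduces the claim to showing $\sum_{S\in\cC}V_S\le 4a^2|\cC|^2$; as in the passage from Corollary~\ref{cor:non_overlap_GMRF} to \eqref{eq:lower_R*_GMRF1-2}, it is enough to bound $\log V_S$ by a sufficiently small multiple of $L:=\log(|\cC|/a)$ for each $S\in\cC$. Write $N=|\bbN_h|$, and for $\phi_1,\phi_2$ drawn i.i.d.\ from $\pi$ set $\mathbf{A}_i:=\bGamma_S^{-1}(\phi_i)-\bI_S$ and $\mathbf{M}:=\bI_S+\mathbf{A}_1+\mathbf{A}_2$, so that $V_S=\EXP_\pi[\rho_S(\phi_1,\phi_2)]$ with $\rho_S^2=\det\bGamma_S^{-1}(\phi_1)\det\bGamma_S^{-1}(\phi_2)/\det\mathbf{M}$.

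The first step is the exact decomposition of $\mathbf{A}(\phi)$. By \eqref{eq:conditions} the $S\times S$ block of the global precision $\bGamma^{-1}(\phi)$ equals $\sigma_\phi^{-2}(\bI_S-\mathbf{B}(\phi))$, where $\mathbf{B}(\phi)_{i,j}=\phi_{i-j}\IND{1\le|i-j|_\infty\le h}$ is the stencil matrix, and the marginal precision $\bGamma_S^{-1}(\phi)$ differs from it by a positive semidefinite Schur correction $\mathbf{R}(\phi)$ supported on $\Delta_h(S)\times\Delta_h(S)$; hence $\mathbf{A}(\phi)=c_\phi\bI_S-\sigma_\phi^{-2}\mathbf{B}(\phi)-\mathbf{R}(\phi)$ with $c_\phi:=\sigma_\phi^{-2}-1$. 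From $\det(\bI+\mathbf{A}_1)\det(\bI+\mathbf{A}_2)=\det(\mathbf{M}+\mathbf{A}_1\mathbf{A}_2)$ we get $\rho_S^2=\det(\bI_S+\mathbf{M}^{-1}\mathbf{A}_1\mathbf{A}_2)$, and expanding $\tr\log(\bI+X)=\tr X+O(\|X\|_F^2)$ (valid once $\|\mathbf{A}_i\|\le 1/4$) gives
\[
\frac12\log\rho_S(\phi_1,\phi_2)=\frac12\,\tr\!\big(\mathbf{B}(\phi_1)\mathbf{B}(\phi_2)\big)+\Psi(\phi_1,\phi_2),
\]
where $\Psi$ collects the scalar prefactor $(\sigma_{\phi_1}^{-2}\sigma_{\phi_2}^{-2}-1)\frac12\tr(\mathbf{B}(\phi_1)\mathbf{B}(\phi_2))$, the deterministic term $\frac12 c_{\phi_1}c_{\phi_2}|S|$, the boundary terms built from $\tr\mathbf{R}(\phi_i)$, $\tr(\mathbf{R}(\phi_1)\mathbf{R}(\phi_2))$, $\tr(\mathbf{B}(\phi_i)\mathbf{R}(\phi_j))$, and all contributions of order $\ge 3$ in the $\mathbf{A}_i$'s. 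The key point is that under $\pi$ each of $\|\phi_i\|_2=r$, $\|\phi_i\|_1=r\sqrt N$, $\|\mathbf{B}(\phi_i)\|_F^2=r^2N^{-1}\sum_kN_S(k)$, $\|\mathbf{A}_i\|$, and $\tr\mathbf{R}(\phi_i)$ takes an essentially deterministic value, so every term of $\Psi$ admits a \emph{pathwise} bound by a monomial in $r$, $N$, $|S|$ and $|\Delta_{2h}(S)|$; for instance the leading cubic remainder is $O(r^3\sqrt N|S|)$ and the principal boundary remainders are $O(r^3\sqrt N|\Delta_{2h}(S)|)$ and $O(r^4|\Delta_{2h}(S)|^2)$. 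The three clauses of \eqref{eq:condition_Nh} are calibrated so that, under \eqref{eq:condition_r2_lower}, all these monomials are $O(L)$: $N\le|S|^{2/5}L^{1/5}$ keeps $\|\phi_i\|_1$ small (so $\pi$ is a valid prior and all the series converge) and bounds the cubic remainder, the clause with $|\Delta_{2h}(S)|$ bounds the boundary remainders, and $N\le|S|/L$ takes care of the regime $N<L$ treated below. Thus $|\Psi(\phi_1,\phi_2)|\le CL$ on the support of $\pi$.

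It remains to average the main term. Writing $\tr(\mathbf{B}(\phi_1)\mathbf{B}(\phi_2))=\sum_{k\in\bbN_h}N_S(k)\,\phi_{1,k}\phi_{2,k}$ with $N_S(k)=|S\cap(S-k)|\le|S|$, and using $\phi_{i,k}=rN^{-1/2}\xi^{(i)}_k$ with $\xi^{(1)},\xi^{(2)}$ independent Rademacher vectors on $\bbN'_h$, the products $\zeta_k:=\xi^{(1)}_k\xi^{(2)}_k$, $k\in\bbN'_h$, are i.i.d.\ Rademacher, whence
\[
\EXP_\pi\big[\exp\big(\tfrac12\tr(\mathbf{B}(\phi_1)\mathbf{B}(\phi_2))\big)\big]=\prod_{k\in\bbN'_h}\cosh\!\Big(\frac{r^2N_S(k)}{N}\Big)\le\exp\!\Big(\frac{r^4}{2N^2}\sum_kN_S(k)^2\Big)\le\exp\!\Big(\frac{r^4|S|^2}{2N}\Big),
\]
using $\sum_kN_S(k)^2\le|S|\sum_kN_S(k)\le|S|^2N$. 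Together with the bound on $\Psi$ this gives $V_S\le\exp\big(r^4|S|^2/(2N)+CL\big)$. When $N\ge L$, \eqref{eq:condition_r2_lower} yields $r^2|S|\le C_0\sqrt{NL}$, hence $r^4|S|^2/N\le C_0^2L$; when $N<L$, one instead uses the crude bound $V_S\le(\EXP_\pi\sqrt{\rho_S(\phi,\phi)})^2\le\exp(20r^2|S|)$ --- which follows from $\rho_S(\phi_1,\phi_2)\le\sqrt{\rho_S(\phi_1,\phi_1)\rho_S(\phi_2,\phi_2)}$ (concavity of $\log\det$, valid since $\|\phi_i\|_1<1/2$) and the estimate $\rho_S(\phi,\phi)\le\exp\big(10|S|r^2/(1-2r\sqrt N)\big)$ from the proof of Corollary~\ref{cor:non_overlap_GMRF} --- together with $r^2|S|\le C_0L$ from \eqref{eq:condition_r2_lower}. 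Either way $\log V_S\le C'C_0L$, and choosing $C_0$ small enough gives $\sum_SV_S\le 4a^2|\cC|^2$, i.e.\ $\bar R^*_{\nu,\pi}\ge 1-a$.

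The main obstacle, absorbing essentially all the work, is the pathwise control of $\Psi$: the log-determinant expansion together with the Schur correction produces many terms, and one has to check that each is dominated by the right combination of the three conditions in \eqref{eq:condition_Nh} --- which is what fixes the exponents $2/5$, $1/5$ and the ratio $|S|/|\Delta_{2h}(S)|$ --- after first verifying, again from those conditions, that $\|\phi_i\|_1=r\sqrt N$ is small enough for the Neumann series for $\mathbf{M}^{-1}$ and the logarithmic series to have geometrically decaying remainders.
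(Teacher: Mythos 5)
Your proposal is correct and takes essentially the same route as the paper: your second-order log-determinant expansion isolating the bilinear term $\tfrac12|S|\langle\phi_1,\phi_2\rangle$ plus cubic and boundary remainders is exactly the content of the paper's Lemma~\ref{lem:det} (proved there via the entrywise comparison of $\bGamma_S^{-1}$ and $\bGamma^{-1}$ in Lemma~\ref{lem:control_Gamma_S}), and the Rademacher averaging into a product of $\cosh$'s is the paper's final step, with the only cosmetic difference that the paper merges your two regimes through $\cosh(x)\le e^{x}\wedge e^{x^2/2}$ where you instead invoke log-concavity of $\det$ to reduce the $|\bbN_h|<\log(|\cC|/a)$ case to the diagonal bound of Corollary~\ref{cor:non_overlap_GMRF}. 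The one caveat is that the pathwise control of your remainder $\Psi$ --- which you correctly flag as carrying essentially all the work, and for which you name the right leading monomials and the right matching of each clause of \eqref{eq:condition_Nh} --- is asserted rather than executed, but it coincides with the computation the paper performs in Lemma~\ref{lem:det}.
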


This bound is our main impossibility result. Its proof relies on a number auxiliary results for Gaussian Markov Random Fields (Section~\ref{sec:proofs_technique}) that may useful for other problems
of estimating Gaussian Markov Random Fields.
Notice that the second term in \eqref{eq:condition_r2_lower} is what appears in \eqref{eq:lower_R*_GMRF1-2}, which we saw arises in the case where the covariance is known.  In light of this fact, we may interpret the first term in \eqref{eq:condition_r2_lower} as the `price to pay' for adapting to an unknown covariance operator in the class of covariance operators of Gaussian Markov random fields with dependency radius $h$.

\subsection{Upper bound: a Fisher-type test} \label{sec:fisher}

We introduce a test whose performance essentially matches the minimax lower bound of Theorem \ref{thrm3:non_overlap_GMRF}.
Comparatively, the construction and analysis of this test is much more involved than that of the generalized likelihood ratio test of \secref{glrt}.

Let $F_i = (X_{i+v} : 1 \le |v|_{\infty} \le h)$, seen as a vector, and let $\bF_{S,h}$ be the matrix with row vectors $F_i, i \in S^h$. 
Also, let $X_{S,h} = (X_i : i \in S^h)$. Under the null hypothesis, each variable $X_i$ is independent of $F_i$, although $X_i$ is correlated with some $(F_j, j\neq i)$. Under the alternative hypothesis, there exists a subset $S$ and a vector $\phi\in \Phi_h$ such that
\beq\label{eq:conditional}
X_{S,h}= \bF_{S,h}\phi+ \epsilon_{S,h}\ ,
\eeq
where each component $\epsilon_i$ of $\epsilon_{S,h}$ is independent of the corresponding vector $F_{i}$, but the $\epsilon_i$'s are not necessarily independent. Equation \eqref{eq:conditional} is the so-called conditional autoregressive (CAR) representation of a Gaussian Markov random field \citep{MR1344683}. 
For Gaussian Markov random fields, the celebrated  pseudo-likelihood method~\citep{besag:1975} amounts to  estimating $\phi$ by taking least-squares in \eqref{eq:conditional}.

Returning to our testing problem, observe
that the null hypothesis is true if and only if all the parameters of the conditional expectation of $X_{S,h}$ given $\bF_{S,h}$ are zero. 
In analogy with the analysis-of-variance approach for testing whether the coefficients of a linear regression model are all zero, we consider a Fisher-type statistic
\beq\label{phi-stat}
T^*= \max_{S\in \cC}T_S\ , \quad \quad T_S := \frac{|S^h|\|\boldsymbol{\Pi}_{S,h}X_{S,h}\|_2^2}{\|X_{S,h}-\boldsymbol{\Pi}_{S,h}X_{S,h}\|_2^2}\ , 
\eeq
where $\boldsymbol{\Pi}_{S,h} := \bF_{S,h}(\bF_{S,h}^\top  \bF_{S,h})^{-1}\bF^\top _{S,h}$ is the orthogonal projection onto the column space of $\bF_{S,h}$. 
Since in the linear model \eqref{eq:conditional} the response vector $X_{S,h}$ is not independent of the design matrix $\bF_{S,h}$, the statistic $T_S$ does not follow an $F$-distribution. Nevertheless, we are able to control the deviations of $T^*$, both under null and alternative hypotheses, leading to the following performance bound.
Recall the definition \eqref{risk-unknown-f}.

\begin{thm}\label{thm:LS1}
There exist four positive constants $C_1,C_2,C_3,C_4$ depending only on $d$ such that the following holds. Assume that 
\beq\label{eq:condition_LS}
|\bbN_h|^4\vee |\bbN_h|^2 \log(|\cC|)  \leq C_1 \min_{S\in \cC}|S^h|\ .
\eeq
Fix $\alpha$ and $\beta$ in $(0,1)$ such that 
\beq \label{alpha_beta}
\log(\tfrac{1}{\alpha})\vee \log(\tfrac{1}{\beta})\leq C_2 \frac{\min_{S\in \cC}|S^h|}{|\bbN_h|^2\log(|\cC|)}\ .
\eeq
Then, under the null hypothesis,
\beq\label{eq:upper_TS_H0}
\P\left\{ T^* \geq   |\bbN_h|+ C_3\left[\sqrt{|\bbN_h|(\log(|\cC|)+ 1+\log(\alpha^{-1}) )}+ \log(|\cC|)+ \log(\alpha^{-1})\right] \right\} \leq \alpha \ ,
\eeq
while under the alternative,
\beq\label{eq:upper_TS_H1}
\P\left\{ T^* \geq |\bbN_h|  + C_4 \left[|S^h|\left(\|\phi\|_2^2\wedge \frac{1}{|\bbN^h|}\right)  - \sqrt{\bbN_h}(1+\log^{4}(\beta^{-1}))\right]\right\} \geq 1 - \beta\ .
\eeq
In particular, if $\alpha_n,\beta_n\to 0$ are arbitrary positive sequences,
then the test $f$ that rejects the null hypothesis if
\[
T^* \ge |\bbN_h| + C_3\left[\sqrt{|\bbN_h|(\log(|\cC|)+ 1+\log(\alpha_n^{-1}) )}+ \log(|\cC|)+ \log(\alpha_n^{-1})\right]
\]
satisfies $R_{\cC,\mathfrak{G}(h,r)}(f) \to 0$ 
as soon as
\beq\label{eq:power_LS}
r^2  > \frac{C_0}{\min_{S\in \cC}|S^h|}\left[\sqrt{|\bbN_h|\left(\log(|\cC|)+\log(\tfrac{1}{\alpha_n})+ \log^{8}(\tfrac{1}{\beta_n}) \right)}\bigvee \log(|\cC|)\bigvee  \log(\tfrac{1}{\alpha_n})\right]\ ,
\eeq 
where $C_0 > 0$ depends only on $d$. 
\end{thm}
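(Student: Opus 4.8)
The plan is to establish the two tail bounds \eqref{eq:upper_TS_H0} and \eqref{eq:upper_TS_H1} for $T^*=\max_{S\in\cC}T_S$, and then deduce the risk bound for $f$. The null bound comes from a per‑$S$ estimate together with a union bound over $\cC$ (which is what produces the $\log(|\cC|)$ terms), whereas the alternative bound only needs a lower bound on $T_S$ for the single anomalous set $S$. The guiding heuristic is that $T_S$ is a Fisher‑type statistic for the CAR regression \eqref{eq:conditional}: if the design $\bF_{S,h}$ were independent of the response $X_{S,h}$, then under $\cH_0$ the statistic $T_S$ would be $|S^h|$ times a ratio of a $\chi^2_{|\bbN_h|}$ to a $\chi^2_{|S^h|-|\bbN_h|}$, hence concentrated near $|\bbN_h|=\mathrm{rank}\,\bF_{S,h}$, while under $\cH_1$ it would gain a noncentrality of order $|S^h|$ times the population partial variance explained. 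The genuinely non‑standard point --- and the source of nearly all the work --- is that $\bF_{S,h}$ and $X_{S,h}$ are dependent; only the rowwise statement $X_i\perp F_i$ is available. Thus $T_S$ is not an $F$‑statistic and its fluctuations must be extracted by hand, via the Hanson--Wright inequality and its degree‑four analogue for Gaussian chaos.

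For \eqref{eq:upper_TS_H0} I would write $T_S=|S^h|\,g_S^\top(\bF_{S,h}^\top\bF_{S,h})^{-1}g_S\big/\big(\|X_{S,h}\|_2^2-g_S^\top(\bF_{S,h}^\top\bF_{S,h})^{-1}g_S\big)$ with $g_S:=\bF_{S,h}^\top X_{S,h}$, and control, on an event of probability at least $1-\alpha/|\cC|$: (i) $\|\bF_{S,h}^\top\bF_{S,h}-|S^h|\bI\|=O\big(|\bbN_h|\sqrt{|S^h|\log(|\cC|/\alpha)}+|\bbN_h|^2\log(|\cC|/\alpha)\big)$, by applying Hanson--Wright to each entry (a sparse‑banded quadratic form with mean $|S^h|\delta_{vw}$) together with a discretization of the unit sphere of $\bbR^{|\bbN_h|}$; (ii) $\|g_S\|_2^2=|S^h|\,|\bbN_h|+O\big(|S^h|\sqrt{|\bbN_h|\log(|\cC|/\alpha)}+\cdots\big)$, since $g_S$ is a vector of centered quadratic forms ($\E g_S=0$ because the shifts are nonzero) and $\|g_S\|_2^2=\sup_{\|u\|_2=1}(u^\top g_S)^2$ is a degree‑four chaos handled on a net; and (iii) $\|X_{S,h}\|_2^2=|S^h|(1+o(1))$, a plain $\chi^2_{|S^h|}$. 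Substituting $(\bF_{S,h}^\top\bF_{S,h})^{-1}=|S^h|^{-1}(\bI+O(\cdot))$ from (i) and dividing yields \eqref{eq:upper_TS_H0}; here one must use the \emph{two‑sided} estimate (ii) on $\|g_S\|_2^2$, not merely an upper bound, to keep the leading term equal to $|\bbN_h|$ and not a multiple of it. Conditions \eqref{eq:condition_LS} and \eqref{alpha_beta} are calibrated exactly so that each error term is $o$ of the quantity it perturbs (e.g.\ $|\bbN_h|^2\log(|\cC|/\alpha)=o(|S^h|)$ and $|\bbN_h|^4=o(|S^h|)$).

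For \eqref{eq:upper_TS_H1}, fix the anomalous $S$ and its parameter $\phi$. Since $\bF_{S,h}\phi$ lies in the column space of $\bF_{S,h}$, the CAR identity \eqref{eq:conditional} gives $\boldsymbol{\Pi}_{S,h}X_{S,h}=\bF_{S,h}\phi+\boldsymbol{\Pi}_{S,h}\epsilon_{S,h}$, so the numerator of $T_S$ equals $|S^h|\big(\|\bF_{S,h}\phi\|_2^2+2\phi^\top\bF_{S,h}^\top\epsilon_{S,h}+\|\boldsymbol{\Pi}_{S,h}\epsilon_{S,h}\|_2^2\big)$ and the denominator equals $\|\epsilon_{S,h}\|_2^2-\|\boldsymbol{\Pi}_{S,h}\epsilon_{S,h}\|_2^2$. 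Using $\E\|\bF_{S,h}\phi\|_2^2=|S^h|\,\phi^\top\bGamma_{\bbN_h}(\phi)\phi=|S^h|(1-\sigma_\phi^2)$ with concentration of $\bF_{S,h}^\top\bF_{S,h}$ around $|S^h|\bGamma_{\bbN_h}(\phi)$ (now under the Markov‑random‑field law, using spectral bounds on $\bGamma(\phi)$, cf.\ Lemma~\ref{lem:spectrum_gamma}), concentration of $\|\epsilon_{S,h}\|_2^2$ around $|S^h|\sigma_\phi^2$ and of $\|\boldsymbol{\Pi}_{S,h}\epsilon_{S,h}\|_2^2$ around $|\bbN_h|\sigma_\phi^2$ (the latter through $\bF_{S,h}^\top\epsilon_{S,h}=\sum_{i}F_i\epsilon_i$, a sum of rowwise‑independent centered products whose squared norm is a degree‑four chaos --- the cruder tail used here being what generates the $\log^4(\beta^{-1})$ slack), and a bound on the cross term $\phi^\top\bF_{S,h}^\top\epsilon_{S,h}$, one gets $T_S\ge|\bbN_h|+|S^h|(1-\sigma_\phi^2)/\sigma_\phi^2-(\text{slack})$. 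The final ingredient is the elementary lemma that $(1-\sigma_\phi^2)/\sigma_\phi^2\ge c\,(\|\phi\|_2^2\wedge|\bbN_h|^{-1})$ for every $\phi\in\Phi_h$ --- equivalently a uniform lower bound on $\phi^\top\bGamma_{\bbN_h}(\phi)\phi$, obtained from the Schur‑complement bound $\lambda_{\min}(\bGamma_{\bbN_h}(\phi))\ge\sigma_\phi^2/(1+\|\phi\|_1)$ together with $\|\phi\|_1\le|\bbN_h|^{1/2}\|\phi\|_2$ and a short case analysis --- and this is where the saturation at $|\bbN_h|^{-1}$ enters; it belongs to the auxiliary results of Section~\ref{sec:proofs_technique}.

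Finally, the claim about $f$ is immediate: its type‑I error is at most $\alpha_n$ by \eqref{eq:upper_TS_H0}, and for the type‑II error we compare its threshold $|\bbN_h|+C_3[\sqrt{|\bbN_h|(\log|\cC|+1+\log\alpha_n^{-1})}+\log|\cC|+\log\alpha_n^{-1}]$ with the high‑probability lower bound \eqref{eq:upper_TS_H1}; after absorbing $\sqrt{|\bbN_h|}\log^4(\beta_n^{-1})\le\sqrt{|\bbN_h|\log^8(\beta_n^{-1})}$ into the first bracket, the two are compatible as soon as $|S^h|(\|\phi\|_2^2\wedge|\bbN_h|^{-1})\ge C\big(\sqrt{|\bbN_h|(\log|\cC|+\log\alpha_n^{-1}+\log^8\beta_n^{-1})}\vee\log|\cC|\vee\log\alpha_n^{-1}\big)$, and checking the regimes $r^2\ge|\bbN_h|^{-1}$ (where the left side is $|S^h|/|\bbN_h|\gtrsim|\bbN_h|^{1/2}$ by \eqref{eq:condition_LS}) and $r^2<|\bbN_h|^{-1}$ shows this is exactly \eqref{eq:power_LS}, whence $R_{\cC,\mathfrak{G}(h,r)}(f)\le\alpha_n+\beta_n\to0$. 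I expect the main obstacle to be ingredient (ii) of the null analysis --- a \emph{two‑sided}, $|\bbN_h|$‑ and $\log(|\cC|)$‑sharp control of the degree‑four chaos $\|g_S\|_2^2$, so that the leading term of $T^*$ is precisely $|\bbN_h|$ --- together with the uniform lower bound on $\phi^\top\bGamma_{\bbN_h}(\phi)\phi$ underlying the $|\bbN_h|^{-1}$ cap on the alternative side.
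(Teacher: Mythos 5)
Your treatment of the alternative hypothesis tracks the paper's proof almost exactly: the decomposition $\boldsymbol{\Pi}_{S,h}X_{S,h}=\bF_{S,h}\phi+\boldsymbol{\Pi}_{S,h}\epsilon_{S,h}$, the three terms $\|\bF_{S,h}\phi\|_2^2$, $2\phi^\top\bF_{S,h}^\top\epsilon_{S,h}$, $\|\boldsymbol{\Pi}_{S,h}\epsilon_{S,h}\|_2^2$, the bound $\|X_{S,h}\|_2^2-Z_S\le\|\epsilon_{S,h}\|_2^2$ for the denominator, the order-$4$ chaos tail as the source of $\log^4(\beta^{-1})$, and the two-regime analysis behind the $\|\phi\|_2^2\wedge|\bbN_h|^{-1}$ cap are all there (the paper realizes the cap by splitting on $\|\phi\|_2^2\lessgtr(4|\bbN_h|)^{-1}$ and, in the large-$\phi$ case, discarding $({\rm III})$ entirely rather than proving your uniform lower bound on $(1-\sigma_\phi^2)/\sigma_\phi^2$, but these are equivalent). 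The final assembly of the risk bound is also as in the paper.

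The gap is in your null-hypothesis ingredient (ii), and you have correctly located it but not closed it. To prove \eqref{eq:upper_TS_H0} you need $\|g_S\|_2^2/|S^h|$ (equivalently $Z_S=\|\boldsymbol{\Pi}_{S,h}X_{S,h}\|_2^2$) to be at most $|\bbN_h|+C\big(\sqrt{|\bbN_h|t}+t\big)$ with probability $1-e^{-t}$, with leading constant exactly $1$ in front of $|\bbN_h|$ and sub-gamma fluctuations in $t$. Neither of the two mechanisms you invoke delivers this: a net/discretization of the unit sphere gives $\|g_S\|_2\le 2\max_{u\in\mathrm{net}}u^\top g_S$, which after squaring inflates the leading term to a multiple of $|\bbN_h|$ (fatal, since the threshold in \eqref{eq:upper_TS_H0} is centred at $|\bbN_h|$ itself and the type~II analysis must beat it by only $\sqrt{|\bbN_h|\log|\cC|}$); and the generic hypercontractive tail for an order-$4$ chaos (the paper's Lemma~\ref{lem:chaos_4}) gives deviations $\Var^{1/2}(\|g_S\|_2^2)\,t^2\asymp|S^h|\sqrt{|\bbN_h|}\,t^2$, i.e.\ $\sqrt{|\bbN_h|}\log^2(|\cC|/\alpha)$ after normalization, which is strictly weaker than $\sqrt{|\bbN_h|\log(|\cC|/\alpha)}+\log(|\cC|/\alpha)$. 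The paper closes this by writing $Z_S=\max_{\phi}\tr[\bR_{\phi,S}X_SX_S^\top]$ as a \emph{supremum of second-order chaoses}, truncating to $\|\phi\|_1\le1$ (and separately showing $\P\{\|\widehat\phi_S\|_1>1\}$ is negligible via a Wishart decomposition of $\bF_{S,h}^\top\bF_{S,h}$ over the sublattices $S_i$ and a bound on $\|\bF_{S,h}X_{S,h}\|_\infty$), bounding $\E[\tilde Z_S]\le|\bbN_h|+C|\bbN_h|^{1/2}$, and then applying the exponential Efron--Stein concentration for such suprema \emph{around their mean} (Lemma~\ref{lem:chaos}), which yields tails $\exp(-C(t^2/\E W\wedge t/B))$ with $B\le3$ and $\E W\lesssim|\bbN_h|$ --- exactly the sub-gamma behaviour needed. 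Your linearization route can in principle be repaired by applying that same concentration-of-suprema tool to $\|g_S\|_2=\sup_{\|u\|_2=1}u^\top g_S$, but as written the proposal does not supply the tool, and without it the stated rate in \eqref{eq:upper_TS_H0} (hence \eqref{eq:power_LS}) is not established.
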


Comparing with the minimax lower bound established in Theorem \ref{thrm3:non_overlap_GMRF}, we see that this test is nearly optimal with respect to $h$, the size of the collection $|\cC|$, and the size $|S|$ of the anomalous region (under the alternative).

\section{Examples: cubes and blobs} \label{sec:cubes}

In this section we specialize our general results proved in the previous subsections to classes of cubes, and more generally, blobs. 

\subsection{Cubes}
Consider the problem of detecting an anomalous cube-shaped region.
Let $\ell \in \{1, \dots, m\}$ and assume that $m$ is an integer multiple of $\ell$ (for simplicity).  Let $\cC$ denote the class of all discrete hypercubes of side length $\ell$, that is, sets of the form
$S = \prod_{s=1}^d \{b_s,\ldots,b_s+\ell-1\}$, where $b_s\in \{1,\ldots,m+1-\ell\}$.
Each such hypercube $S \in \cC$ contains $|S| = k := \ell^d$ nodes, and the class is of size $|\cC| = (m-1-\ell)^d \le n$.

The lower bounds for the risk established in \corref{non_overlap_GMRF} 
and Theorem~\ref{thrm3:non_overlap_GMRF} are not directly applicable here since these results require subsets of the class $\cC$ to be disjoint.
However, they apply to any subclass $\cC' \subset \cC$ of disjoint subsets and, as mentioned in \secref{prelim}, any lower bound on the minimax risk over $\cC'$ applies to the minimax risk over $\cC$.  A natural choice for $\cC'$ here is that of all cubes of the form 
$S = \prod_{s=1}^d \{a_s \ell +1, \dots, (a_s+1) \ell\}$, 
where $a_s \in \{0, \dots, m/\ell-1\}$. 
Note that $|\cC'|=(m/\ell)^d= n/k$.

\medskip\noindent {\bf $h$ bounded.}
Consider first the case where the radius $h$ of the neighborhood is bounded.  We may apply \corref{non_overlap_GMRF} to get 
\[
R^*_{\cC,\phi} \geq 1- \frac{k^{1/2}}{2n^{1/2}}\exp\left\{\frac{5k\|\phi\|_2^2}{1-2\|\phi\|_1}\right\}\ .\]
For a given $r>0$ satisfying $2|\bbN_h|r\leq 1$, we can choose a parameter $\phi$ constant over $\bbN_h$ such that $\|\phi\|_2=r$ and $\|\phi\|_1=r\sqrt{(2h+1)^d-1}$.  Since $R^*_{\cC,\mathfrak{G}(h,r)}\geq R^*_{\cC,\phi}$, 
 we thus have $R^*_{\cC,\mathfrak{G}(h,r)} \to 1$ when $n \to \infty$, if $(k,\phi) = (k(n), \phi(n))$ satisfies $\log(n)\ll k\ll n$ and  $r^2 \le \log(n/k)/(11 k)$.
Comparing with the performance of the Fisher test of \secref{fisher}, in this particular case, Condition \eqref{eq:condition_LS} is met, and letting $\alpha = \alpha(n) \to 0$ and $\beta = \beta(n) \to 0$ slowly, we conclude from \eqref{eq:power_LS} that this test (denoted $f$) has risk $R_{\cC,\mathfrak{G}(h,r)}(f) \to 0$ when $r^2 \ge C_0 \log(n)/k$ for some constant $C_0$.  
Thus, in this setting, the Fisher test, without knowledge of $\phi$, achieves the correct detection rate as long as $k \le n^b$ for some fixed $b < 1$.

\medskip\noindent {\bf $h$ unbounded.}
When $h$ is unbounded, we obtain a sharper bound by using Theorem~\ref{thrm3:non_overlap_GMRF} instead of \corref{non_overlap_GMRF}.  Specialized to the current setting, we derive the following.

\begin{cor}\label{cor:lower_hypercube}
There exist two positive constants $C_1$ and $C_2$ depending only on $d$ such that the following holds.
Assume that the neighborhood size $h$ is small enough that
\beq\label{eq:condition_neigbhorhood}
|\bbN_h|\leq C_1\left[  \frac{k}{\log^{1\vee (d/2)}\left(\frac{n}{k}\right)}\bigwedge k^{2/5}\log^{1/5}\left(\frac{n}{k}\right)\bigwedge d^{-\frac{2d}{d+2}} k^{\frac{2}{d+2}}\log^{\frac{d}{3d+6}}\left(\frac{n}{k}\right)\right]\ .
\eeq
Then the minimax risk tends to one when $n \to \infty$ as soon as $(k,h,r)=(k(n),h(n),r(n))$ satisfies $n/k\rightarrow \infty$ and
\beq\label{eq:powerless_gmrf}
r^2 \le C_2 \left[\frac{\log(\tfrac{n}{k})}{k}\bigvee\frac{\sqrt{|\bbN_{h}|\log(\tfrac nk)}}{k}\right] \ .
\eeq
\end{cor}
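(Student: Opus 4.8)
The plan is to deduce Corollary~\ref{cor:lower_hypercube} directly from Theorem~\ref{thrm3:non_overlap_GMRF}; all the work lies in checking that the theorem's hypotheses are implied by~\eqref{eq:condition_neigbhorhood} and~\eqref{eq:powerless_gmrf} in the cube setting. First I would reduce to disjoint sets: since the class $\cC$ of all cubes of side $\ell$ is not disjoint, I pass to the subclass $\cC'$ consisting of the cubes $\prod_{s=1}^d\{a_s\ell+1,\dots,(a_s+1)\ell\}$, which is a disjoint family with $|\cC'|=n/k$ and $|S|=k=\ell^d$ for every $S\in\cC'$. By monotonicity of the minimax risk with respect to $\cC$ and the fact that a Bayes risk lower-bounds a minimax risk, $R^*_{\cC,\mathfrak{G}(h,r)}\ge R^*_{\cC',\mathfrak{G}(h,r)}\ge \bar R^*_{\nu,\pi}$, where $\nu$ is the uniform prior on $\cC'$ and $\pi$ is the Rademacher-type prior of Section~\ref{sec:lower-unknown}; so it suffices to apply Theorem~\ref{thrm3:non_overlap_GMRF} with $\cC$ replaced by $\cC'$ and conclude $\bar R^*_{\nu,\pi}\to1$. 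I would take the level $a=a_n\to0$ slowly enough that $\log(1/a_n)=o(\log(n/k))$ (possible because $n/k\to\infty$), so that $\log(|\cC'|/a_n)=\log(n/k)\,(1+o(1))$; this decay of $a_n$ is what turns the Bayes statement into ``the minimax risk tends to one.''

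The heart of the matter is verifying hypothesis~\eqref{eq:condition_Nh}. Since $|S|=k$ for all $S\in\cC'$ and $\log(|\cC'|/a_n)=\log(n/k)(1+o(1))$, its first two terms become (up to the $1+o(1)$) $k/\log(n/k)$ and $k^{2/5}\log^{1/5}(n/k)$, and these are dominated by the first two terms of~\eqref{eq:condition_neigbhorhood} once $C_1$ is small: the first term of~\eqref{eq:condition_neigbhorhood} is comfortably below $k/\log(n/k)$ because the exponent $1\vee(d/2)$ is at least $1$, and this same bound, via $|\bbN_h|\ge(2h)^d$, forces $\ell=k^{1/d}$ to exceed $h$ by a large factor (say $\ell\ge 8h$). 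The third term of~\eqref{eq:condition_Nh} requires a boundary count: for a cube $S$ of side $\ell$, a vertex of $S$ lies at distance at most $2h$ from $\bbZ^d\setminus S$ unless all of its coordinates are at least $2h+1$ away from every face, so $|\Delta_{2h}(S)|=\ell^d-(\ell-4h)^d\le 4dh\,\ell^{d-1}$ (valid since $\ell\ge8h$). Hence $|S|/|\Delta_{2h}(S)|\ge k^{1/d}/(4dh)$, and combining this with $h\le\tfrac12|\bbN_h|^{1/d}$ turns the third term of~\eqref{eq:condition_Nh} into a requirement of the shape $|\bbN_h|^{(d+2)/d}\le c\,d^{-2}k^{2/d}\log^{-1/6}(n/k)$, equivalently $|\bbN_h|\le c'\,d^{-2d/(d+2)}k^{2/(d+2)}\log^{-d/(6(d+2))}(n/k)$, which is exactly what the third term of~\eqref{eq:condition_neigbhorhood} supplies for $C_1$ small enough. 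I would also record that $\pi$ is a legitimate prior on $\Phi_h(r)$, i.e.\ $\|\phi\|_1=r\sqrt{|\bbN_h|}<1$ for $\phi\sim\pi$; this follows from~\eqref{eq:powerless_gmrf} together with~\eqref{eq:condition_neigbhorhood}, since both force $r^2|\bbN_h|$ to be small.

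Once~\eqref{eq:condition_Nh} is in force, Theorem~\ref{thrm3:non_overlap_GMRF} gives $\bar R^*_{\nu,\pi}\ge 1-a_n$ provided~\eqref{eq:condition_r2_lower} holds; but $\max_{S\in\cC'}|S|=k$ and $\log(|\cC'|/a_n)=\log(n/k)(1+o(1))$, so~\eqref{eq:condition_r2_lower} reads $r^2k\le C\big(\sqrt{|\bbN_h|\log(n/k)}\vee\log(n/k)\big)$, which is precisely~\eqref{eq:powerless_gmrf} after dividing by $k$ and renaming the constant $C_2$ (taking it no larger than the $C_0$ of Theorem~\ref{thrm3:non_overlap_GMRF}). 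Letting $n\to\infty$ then yields $R^*_{\cC,\mathfrak{G}(h,r)}\ge\bar R^*_{\nu,\pi}\ge1-a_n\to1$, which is the claim.

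The main obstacle I anticipate is the third step of the second paragraph: carefully tracking the exponents of $k$, $d$, and $\log(n/k)$ through the volume-to-boundary ratio $|S|/|\Delta_{2h}(S)|$ and the conversion between $h$ and $|\bbN_h|$, so that~\eqref{eq:condition_neigbhorhood} comes out calibrated to exactly what~\eqref{eq:condition_Nh} demands. Everything else is a mechanical substitution into Theorem~\ref{thrm3:non_overlap_GMRF} together with the elementary observation that the level $a_n$ may be taken to vanish.
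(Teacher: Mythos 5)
Your overall strategy (reduce to the disjoint subfamily $\cC'$ with $|\cC'|=n/k$, let the level $a_n\to0$ slowly, and feed the cube geometry into Theorem~\ref{thrm3:non_overlap_GMRF}) is the right starting point, and your verification of the first two terms of \eqref{eq:condition_Nh}, of $\ell\ge 8h$, of the boundary count $|\Delta_{2h}(S)|\le 4dh\ell^{d-1}$, and of the admissibility of the prior are all fine. The gap is exactly where you flagged the risk: the third term. Your own computation correctly shows that the third term of \eqref{eq:condition_Nh} translates into the requirement $|\bbN_h|\le c\,d^{-2d/(d+2)}k^{2/(d+2)}\log^{-d/(6(d+2))}(n/k)$, with a \emph{negative} power of the logarithm; but the third term of \eqref{eq:condition_neigbhorhood} only supplies $|\bbN_h|\le C_1 d^{-2d/(d+2)}k^{2/(d+2)}\log^{+d/(3d+6)}(n/k)$, a \emph{positive} power. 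These differ by a factor $\log^{d/(2(d+2))}(n/k)\to\infty$, so no choice of $C_1$ closes the gap, and the other two terms of \eqref{eq:condition_neigbhorhood} do not rescue you: for instance with $d=2$, $k\asymp\log^2(n/k)$ and $|\bbN_h|\asymp\log(n/k)$, all three terms of \eqref{eq:condition_neigbhorhood} are satisfied while the third term of \eqref{eq:condition_Nh} fails. So the corollary as stated cannot be obtained by a black-box application of the theorem.

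The paper sidesteps this by \emph{re-entering the proof} of Theorem~\ref{thrm3:non_overlap_GMRF} rather than invoking its statement: it reuses the intermediate lower bound \eqref{eq:lower_gmrf_complex} and bounds the boundary contribution $|\Delta_{2h}(S)|^{3/2}r^3$ (together with $|\Delta_{2h}(S)|r^2\le|\Delta_{2h}(S)|^{3/2}r^3\vee1$) directly, \emph{using the specific magnitude of $r$ permitted by \eqref{eq:powerless_gmrf}}. Plugging $r^3\le C_2^{3/2}\bigl[\log^{3/2}(n/k)\vee|\bbN_h|^{3/4}\log^{3/4}(n/k)\bigr]k^{-3/2}$ into $|\Delta_{2h}(S)|^{3/2}\le Cd^{3/2}|\bbN_h|^{3/(2d)}k^{3(d-1)/(2d)}$ yields a requirement of the form $|\bbN_h|^{(d+2)/d}\le c\,d^{-2}k^{2/d}\log^{+1/3}(n/k)$, i.e.\ exactly the positive-log-power third term of \eqref{eq:condition_neigbhorhood}. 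In other words, \eqref{eq:condition_Nh} is a worst-case condition written for an arbitrary class $\cC$ and an arbitrary $r$ at the threshold, and it is strictly more conservative than what the proof actually needs once the cube value of $|\Delta_{2h}(S)|$ and the explicit $r$ from \eqref{eq:powerless_gmrf} are substituted. To repair your argument you must replace the step ``verify \eqref{eq:condition_Nh}'' by this direct bound on the exponent in \eqref{eq:lower_gmrf_complex} (or, equivalently, prove a version of the theorem whose hypothesis on $|\Delta_{2h}(S)|$ is stated in terms of the product $|\Delta_{2h}(S)|^{3/2}r^3$).
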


Note that, in the case of a square neighborhood, $|\bbN_h| = (2h+1)^d -1$.
Comparing with the performance of the Fisher test, in this particular case, Condition \eqref{eq:condition_LS} is equivalent to $|\bbN_h| \le C_0 \big(k^{1/4} \wedge \sqrt{k/\log(n)}\big)$ for some constant $C_0$.  When $k$ is polynomial in $n$, this condition is stronger than Condition \eqref{eq:condition_neigbhorhood} unless $d \le 5$.  In any case, assuming $h$ is small enough that both \eqref{eq:condition_LS} and \eqref{eq:condition_neigbhorhood} hold, and letting $\alpha = \alpha(n) \to 0$ and $\beta = \beta(n) \to 0$ slowly, we conclude from \eqref{eq:power_LS} that the Fisher test has risk $R_{\cC,\mathfrak{G}(h,r)}$ tending to zero  when 
\[r^2 \ge C_0 \left[\frac{\log(n)}{k}\bigvee\frac{\sqrt{|\bbN_{h}|\log(n)}}{k}\right],\] 
for some large-enough constant $C_0 > 0$, matching the lower bound \eqref{eq:powerless_gmrf} up to a multiplicative constant as long as $k \le n^b$ for some fixed $b < 1$.

\medskip
In conclusion, whether $h$ is fixed or unbounded but growing slowly enough, the Fisher test achieves a risk matching the lower bound up to a multiplicative constant.

\subsection{Blobs}
So far, we only considered hypercubes, but our results generalize immediately to much larger classes of blob-like regions.  Here, we follow the same strategy used in the detection-of-means setting, for example, in \cite{MGD,cluster,MR2589191}.

Fix two positive integers $\ell_\circ \le \ell^\circ$ and let $\cC$ be a class of subsets $S$ such that there are hypercubes $S_\circ$ and $S^\circ$, of respective side lengths $\ell_\circ$ and $\ell^\circ$, such that $S_\circ \subset S \subset S^\circ$.  Letting $\cC_\circ$ and $\cC^\circ$ denote the classes of hypercubes of side lengths $\ell_\circ$ and $\ell^\circ$, respectively, our lower bound for the worst-case risk associated with the class $\cC^\circ$ obtained from \corref{lower_hypercube} applies directly to $\cC$---although not completely obvious, this follows from our analysis---while scanning over $\cC_\circ$ in the Fisher test yields the performance stated above for the class of cubes. In particular, if $\ell_\circ/\ell^\circ$ remains bounded away from $0$, the problem of detecting a region in $\cC$ is of difficulty comparable to detecting a hypercube in $\cC_\circ$ or $\cC^\circ$.

When the size of the anomalous region $k$ is unknown, meaning that the class $\cC$ of interest includes regions of different sizes, we can simply scan over dyadic hypercubes as done in the first step of the multiscale method of \cite{MGD}.  
This does not change the rate as there are less than $2n$ dyadic hypercubes.
See also \cite{cluster}.
 
We note that when $\ell_\circ/\ell^\circ = o(1)$, scanning over hypercubes may not be very powerful.
For example, for ``convex" sets, meaning when 
\[\cC = \Big\{S = K \cap \cV: K \subset \bbR^d \text{ convex}, |K \cap \cV| = k\Big\}~,\] 
it is more appropriate to scan over ellipsoids due to John's ellipsoid theorem \citep{MR0030135}, which implies that for each convex set $K \subset \bbR^d$, there is an ellipsoid $E \subset K$ such that ${\rm vol}(E) \ge d^{-d} {\rm vol}(K)$.
For the case where $d=2$ and the detection-of-means problem, \cite{MR2589191}---expanding on ideas proposed in \cite{MGD}---scan over parallelograms, which can be done faster than scanning over ellipses.

Finally, we mention that what we said in this section may apply to other types of regular lattices, and also to lattice-like graphs such as typical realizations of a random geometric graph.  See \cite{cluster,MR2604703} for detailed treatments in the detection-of-means setting.

\section{Discussion} \label{sec:discussion}

We provided lower bounds and proposed near-optimal procedures for testing for the presence of a piece of a Gaussian Markov random field.
These results constitute some of the first mathematical results for the problem of detecting a textured object in a noisy image.
We leave open some questions and generalization of interest.

{\em More refined results.}
We leave behind the delicate and interesting problem of finding the exact detection rates, with tight multiplicative constants.  
This is particularly appealing for simple settings such as finding an interval of an autoregressive process, as described in \secref{intro-tseries}.
Our proof techniques, despite their complexity, are not sufficiently refined to get such sharp bounds.  
We already know that, in the detection-of-means setting, bounding the variance of the likelihood ratio does not yield the right constant.
The variant which consists of bounding the first two moments of a carefully truncated likelihood ratio, possibly pioneered in \cite{Ingster99}, is applicable here, but the calculations are quite complicated and we leave them for future research.

{\em Texture over texture.}
Throughout the paper we assumed that the background is Gaussian white noise. 
This is not essential, but makes the narrative and results more accessible.
A more general, and also more realistic setting, would be that of detecting a region where the dependency structure is markedly different from the remainder of the image.  This setting has been studied in the context of time series, for example, in some of the references given in \secref{intro-tseries}.  However, we are not aware of existing theoretical results in higher-dimensional settings such as in images.

{\em Other dependency structures.}
We focused on Markov random fields with limited neighborhood range (quantified by $h$ earlier in the paper).  This is a natural first step, particularly since these are popular models for time series and textures.
However, one could envision studying other dependency structures, such as short-range dependency, defined in \cite{MR2379935} as situations where the covariances are summable in the following sense
\[
\sup_{i\in \cV_\infty} \sum_{j \in \cV_\infty \setminus i} |\bGamma_{i,j}| < \infty\ .
\]

\section{Proofs} \label{sec:proofs}

\subsection{Proof of \prpref{non_overlap_parametric}}

The Bayes risk is achieved by the likelihood ratio
test $f_{\nu,\pi}^*(x)=\IND{L_{\nu,\pi}(x)>1}$ where
\[
L_{\nu,\pi}(x)= \frac{1}{|\cC|} \sum_{S\in \cC} L_S(x)~, \text{ with} \quad
L_S(x) = \int \frac{ {\rm d}\PROB_{S,\bGamma(\phi)}(x) } {{\rm d} \PROB_0(x)} \pi({\rm d}\phi)~.
\]
In our Gaussian model,
\beq \label{L_S}
L_S(x) = \EXP_{\pi}\left[ \exp\left(\tfrac12 x_S^\top (\bI_S - \bGamma_{S}^{-1}(\phi)) x_S - \tfrac12 \log \det(\bGamma_{S}(\phi))\right)\right]~,
\eeq
where the expectation is taken with respect to the random draw of
$\phi \sim \pi$.
Then, by \eqref{LR-risk-general},
\beq \label{LR-risk}
\bar{R}^*_{\nu,\pi} = 1 - \frac12 \E_0 | L_{\nu,\pi}(\X) - 1 | \geq 1 - \frac12 \sqrt{\E_0[L_{\nu,\pi}(\X)^2] -1} \ .
\eeq
(Recall that $\EXP_0$ stands for expectation with respect to the standard normal random vector $\X$.)

We proceed to bound the second moment of the likelihood ratio under the null hypothesis.  Summing over $S,T \in \cC$, we have
\beqn
&& \hspace{-.3in} \E_0[L_{\nu,\pi}(\X)^2] \\
&=& \frac1{|\cC|^2} \sum_{S,T\in \cC} \E_0[L_S(\X) L_T(\X)] \\
&=& \frac1{|\cC|^2} \sum_{S \ne T} \E_0[ L_S(\X)] \E_0 [L_T(\X)] +
\frac1{|\cC|^2} \sum_{S\in \cC} \E_0 [L_S^2(\X)] \\
&=& \frac{|\cC|-1}{|\cC|} + \frac1{|\cC|^2} \sum_{S\in\cC} \E_0 \EXP_{\pi}\Big[\exp\Big(X_S^\top \big(\bI_S - \tfrac12 \bGamma_{S}^{-1}(\phi_1) - \tfrac12  \bGamma_{S}^{-1}(\phi_2) \big) X_S \\ 
&& \qquad \qquad \qquad \qquad \qquad \qquad 
- \tfrac{1}{2}\log \det(\bGamma_{S}(\phi_1))- \tfrac{1}{2}\log
\det(\bGamma_{S}(\phi_2)) \Big)\Big] \\
& \leq  & 1+  \frac1{|\cC|^2} \sum_{S}\EXP_{\pi}\Big[ \exp\Big( - \tfrac12 \log \det(\bGamma_{S}^{-1}(\phi_1)+\bGamma_{S}^{-1}(\phi_2) - \bI_S) - \tfrac{1}{2}\log \det(\bGamma_{S}(\phi_1)\bGamma_{S}(\phi_2)) \Big)\Big] \\
& = & 1 + \frac{1}{|\cC|^2 } \sum_{S} V_S  \ ,
\eeqn
where in the second equality we used the fact that $S \ne T$ are disjoint, and therefore $L_S(\X)$ and $L_T(\X)$ are independent, and in the third we used the fact that $\E_0 [L_S(\X)] = 1$ for all $S\in \cC$.

\subsection{Deviation inequalities}

Here we collect a few more-or-less standard inequalities that we need in the proofs.
We  start with the following standard tail bounds for Gaussian quadratic
forms.  See, e.g., Example 2.12 and Exercise 2.9 in \cite{BoLuMa13}.
  
\begin{lem} \label{lem:normal-quad}
Let $Z$ be a standard normal vector in $\bbR^d$ and let $\bR$ be a symmetric $d\times d$ matrix. Then
\[
\pr{Z^\top \bR Z - \tr(\bR) \geq  2\|\bR\|_F \sqrt{t} + 2 \|\bR\|t \, } \le e^{-t}, \quad \forall t \ge 0~.
\]
Furthermore, if the matrix $\bR$ is positive semidefinite, then
\[
\pr{Z^\top \bR Z - \tr(\bR) \leq  -2\|\bR\|_F \sqrt{t} \, } \le e^{-t}, \quad \forall t \ge 0.
\]
\end{lem}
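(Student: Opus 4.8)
\emph{Proof sketch (plan).} The plan is the standard Laplace-transform (Chernoff) argument, after reducing the quadratic form to a weighted sum of independent squared Gaussians. First I would write the spectral decomposition $\bR = \sum_{i=1}^d \lambda_i u_i u_i^\top$ with $(u_i)$ an orthonormal basis of $\bbR^d$; since the law of $Z$ is invariant under orthogonal transformations, $(u_i^\top Z)_{i=1}^d$ is again a standard normal vector, so $Z^\top \bR Z$ has the same distribution as $\sum_{i=1}^d \lambda_i \xi_i^2$ with $\xi_1, \dots, \xi_d$ i.i.d.\ $\cN(0,1)$, while $\tr(\bR) = \sum_i \lambda_i$, $\sum_i \lambda_i^2 = \|\bR\|_F^2$, and $\max_i |\lambda_i| = \|\bR\|$. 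It thus suffices to establish the two one-sided deviation bounds for $W := \sum_i \lambda_i (\xi_i^2 - 1)$.

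For the upper tail, I would use that, for $0 \le s < 1/(2\|\bR\|)$, independence gives $\log \E[e^{sW}] = \sum_i \psi(s\lambda_i)$ where $\psi(x) := -x - \tfrac12 \log(1-2x)$. The two elementary bounds $\psi(x) \le \tfrac{x^2}{1-2x}$ for $0 \le x < 1/2$ and $\psi(x) \le x^2$ for $x \le 0$ (the latter from $\log(1+t) \ge t - t^2/2$, $t \ge 0$) give, in either sign case, $\psi(s\lambda_i) \le \tfrac{s^2 \lambda_i^2}{1 - 2s\|\bR\|}$, and hence $\log \E[e^{sW}] \le \tfrac{s^2\|\bR\|_F^2}{1 - 2s\|\bR\|}$. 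This is exactly the sub-gamma condition with variance factor $2\|\bR\|_F^2$ and scale parameter $2\|\bR\|$, so the usual optimization over $s$ in Markov's inequality (equivalently, the sub-gamma tail lemma) yields $\pr{W \ge 2\|\bR\|_F\sqrt{t} + 2\|\bR\|t} \le e^{-t}$, which is the first claim.

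For the lower tail, assume $\bR \succeq 0$, so all $\lambda_i \ge 0$. For $s \ge 0$ each argument $-s\lambda_i$ is nonpositive, and the bound $\psi(x) \le x^2$ for $x \le 0$ gives $\log \E[e^{-sW}] = \sum_i \psi(-s\lambda_i) \le s^2 \|\bR\|_F^2$ for all $s \ge 0$; that is, $-W$ is sub-Gaussian with variance proxy $2\|\bR\|_F^2$, with \emph{no} scale term, and the sub-Gaussian Chernoff bound gives $\pr{W \le -2\|\bR\|_F\sqrt{t}} \le e^{-t}$. The reason no scale term appears is that, for nonnegative weights, the lower tail of $W$ is governed purely by its Gaussian part.

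I do not anticipate any real obstacle: this is a textbook computation --- it is precisely Example~2.12 and Exercise~2.9 of \cite{BoLuMa13}, which we cite --- and the only mildly delicate points are checking the elementary inequalities for $\psi$ (a short case split on the sign of $\lambda_i$, where for $\lambda_i < 0$ one simply notes $x^2 \le \tfrac{x^2}{1-2s\|\bR\|}$) and carrying out the Chernoff optimization, both routine.
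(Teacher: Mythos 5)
Your argument is correct and is exactly the standard Chernoff/Laplace-transform computation that the paper relies on by citing Example~2.12 and Exercise~2.9 of \cite{BoLuMa13} (the paper gives no independent proof). The diagonalization, the bounds on $\psi(x)=-x-\tfrac12\log(1-2x)$ in the two sign cases, and the resulting sub-gamma and sub-Gaussian tail bounds all check out, so nothing further is needed.
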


\begin{lem}\label{lem:chaos_4}
There exists a positive constant $C$ such that the following holds. 
For any  Gaussian chaos $Z$ up to order $4$ and any $t>0$,
\[\P\left\{|Z-\E[Z]|\geq C\Var^{1/2}(Z)t^2 \right\}\leq e^{-t}\ .\]
\end{lem}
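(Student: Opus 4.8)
The plan is to obtain the bound from Gaussian hypercontractivity --- which controls every $L^p$ norm of a Wiener chaos by its $L^2$ norm --- followed by the standard passage from moment bounds to tail bounds. First I would reduce to a centered chaos: set $W := Z - \E[Z]$, which is again a Gaussian chaos of order at most $4$, now with $\E[W] = 0$ and $\E[W^2] = \Var(Z) =: \sigma^2$; it decomposes into homogeneous components $W = \sum_{k=1}^{4} W_k$, where $W_k$ lies in the $k$-th Wiener chaos and the $W_k$ are pairwise orthogonal in $L^2$, so that $\sum_{k=1}^{4}\|W_k\|_2^2 = \sigma^2$.

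Next I would apply Gaussian hypercontractivity (Nelson's theorem; see, e.g., \cite{BoLuMa13}): for an element of the $k$-th Wiener chaos and any $p \ge 2$, $\|W_k\|_p \le (p-1)^{k/2}\|W_k\|_2$. Applying this componentwise, then using the triangle inequality in $L^p$ and the Cauchy--Schwarz inequality over $k \in \{1,\dots,4\}$, gives
\[
\|W\|_p \ \le\ \sum_{k=1}^{4} (p-1)^{k/2}\|W_k\|_2 \ \le\ (p-1)^{2}\sum_{k=1}^{4}\|W_k\|_2 \ \le\ 2(p-1)^{2}\sigma \ \le\ c_1\, p^{2}\sigma
\]
for all $p \ge 2$, with $c_1$ an absolute constant. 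This is the crux of the matter: a chaos of order at most $4$ has $p$-th moments that grow no faster than $p^{2}$.

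Finally I would convert this into a tail bound. By Markov's inequality applied at the $p$-th moment, for every $u > 0$ and every $p \ge 2$,
\[
\P\{|W| \ge u\} \ \le\ u^{-p}\,\E|W|^{p} \ \le\ \Big(\frac{c_1\, p^{2}\sigma}{u}\Big)^{p}\ .
\]
Optimizing over $p$ --- taking $p = e^{-1}\sqrt{u/(c_1\sigma)}$, which is $\ge 2$ once $u \ge 4e^{2}c_1\sigma$ --- makes the bracket equal to $e^{-2}$ and gives $\P\{|W| \ge u\} \le \exp\big(-c_2\sqrt{u/\sigma}\big)$ for an absolute $c_2 > 0$. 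Substituting $u = C\sigma t^{2}$ and choosing $C$ large enough that $c_2\sqrt{C} \ge 1$ yields $\P\{|Z - \E Z| \ge C\sigma t^{2}\} \le e^{-t}$ for all $t$ exceeding an absolute constant, and the remaining range of small $t$ --- immaterial for the applications --- is covered by Chebyshev's inequality at the cost of at most a universal multiplicative factor. I do not anticipate a genuine obstacle: the proof is an assembly of standard facts, and the only points requiring attention are that hypercontractivity must be used component by component, since $W$ is a mixture of chaoses of orders $1$ through $4$ rather than a homogeneous one (so that the top order $4$ dictates the $p^{2}$ moment growth and hence the square-root tail), and that the constants be carefully tracked through the optimization over $p$.
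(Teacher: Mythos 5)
Your argument is correct in substance and is essentially the same as the paper's: both rest on hypercontractivity of Gaussian chaos followed by Markov's inequality, and differ only in packaging. The paper invokes Theorem 3.2.10 and Corollary 3.2.6 of \cite{MR1666908}, which deliver the exponential integrability $\E\exp\big[|(Z-\E[Z])/(C\Var^{1/2}(Z))|^{1/2}\big]\le 2$ directly, and then applies Markov to this exponential; you instead derive the moment growth $\|Z-\E[Z]\|_p\le c_1 p^2\Var^{1/2}(Z)$ by applying Nelson's inequality to the homogeneous Wiener-chaos components and then optimize Markov over $p$. These are two standard, equivalent encodings of the same sub-Weibull tail of exponent $1/2$, and your version has the merit of being self-contained, since the cited results of de la Pe\~na and Gin\'e are themselves proved via hypercontractivity. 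Your intermediate steps (orthogonal decomposition, $(p-1)^{k/2}\le(p-1)^2$ for $k\le 4$ and $p\ge 2$, Cauchy--Schwarz over the four components, and the choice $p=e^{-1}\sqrt{u/(c_1\sigma)}$) all check out.

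One caveat, which you half-acknowledge: the proposed patch for small $t$ does not work, and in fact cannot. Chebyshev gives $\P\{|W|\ge C\sigma t^2\}\le 1/(C^2t^4)$, which exceeds $1$ as $t\to 0$ while $e^{-t}<1$; worse, for $W$ with a bounded positive density near $0$ one has $\P\{|W|\ge C\sigma t^2\}=1-O(t^2)$ whereas $e^{-t}=1-t+O(t^2)$, so the asserted inequality is genuinely false for all sufficiently small $t>0$ with any fixed $C$ (already for $Z$ standard normal). This is a defect of the lemma as stated rather than of your proof---the paper's own one-line proof has the same issue, since Markov applied to the cited exponential bound yields $2e^{-t}$ rather than $e^{-t}$---and it is harmless here because every application of the lemma in the paper uses $t$ bounded away from zero.
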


\begin{proof}
 This deviation inequality is a consequence of the hypercontractivity of Gaussian chaos. More precisely, Theorem 3.2.10 and Corollary 3.2.6 in \cite{MR1666908} state that
\[\E\exp\left[\left(\frac{Z-\E[Z]}{C\Var^{1/2}(Z)}\right)^{1/2}\right]\leq 2\ ,\]
where $C$ is a numerical constant. Then, we apply Markov inequality to prove the lemma.
\end{proof}

\begin{lem}\label{lem:chaos}
There exists a positive constant $C$ such that the following holds.
Let $F$ be a compact set of symmetric $r\times r$ matrices and let $Y\sim \cN(0,\bI_r)$. For any $t>0$, the random variable  $Z:= \sup_{\bR \in F} \tr\left[\bR YY^\top \right]$ satisfies 
\beq \label{eq:chaos_concentration}
\mathbb{P}\{Z \geq \mathbb{E}(Z) + t\} \leq \exp\left(-C\left( \frac{t^2}{\mathbb{E}(W)}\bigwedge \frac{t}{B}\right)\right),
\eeq
where  $W  := \sup_{\bR \in F} \tr(\bR YY^\top \bR)$ and  $B  :=  \sup_{\bR \in F}\|\bR\|$.
\end{lem}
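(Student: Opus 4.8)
\medskip
\noindent\textbf{Plan of proof of Lemma~\ref{lem:chaos}.}
The plan is to reduce the statement to a concentration inequality for a single real-valued function of the Gaussian vector $Y$. Since $\tr(\bR YY^\top)=Y^\top\bR Y$ for symmetric $\bR$, we have $Z=f(Y)$ with $f(y):=\sup_{\bR\in F}y^\top\bR y$. As $F$ is compact and each map $y\mapsto y^\top\bR y$ is $C^1$ with gradient $2\bR y$ of norm at most $2B\|y\|$, the function $f$ is locally Lipschitz, hence differentiable almost everywhere by Rademacher's theorem; at every point of differentiability the envelope (Danskin) theorem gives $\nabla f(y)=2\bR^\star y$ for some maximizer $\bR^\star=\bR^\star(y)\in F$, whence
\[
\|\nabla f(y)\|^2\ \le\ 4\sup_{\bR\in F}\|\bR y\|^2\ =\ 4\sup_{\bR\in F}\tr(\bR yy^\top\bR)\ =\ 4\,W(y)
\]
for almost every $y$, while $\E[4W]=4\,\E W<\infty$ since $W\le B^2\|Y\|^2$.

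\medskip
The structural fact I would exploit is that the random ``variance proxy'' $W$ has a $B$-Lipschitz square root: for all $y,y'$, $\big|\sqrt{W(y)}-\sqrt{W(y')}\big|\le\sup_{\bR\in F}\big|\,\|\bR y\|-\|\bR y'\|\,\big|\le\sup_{\bR\in F}\|\bR(y-y')\|\le B\|y-y'\|$, using $\|\bR\|\le B$ on $F$. From here I would derive the Bernstein-type upper tail for $f$ via the Gaussian logarithmic Sobolev inequality and the Herbst argument: with $\psi(\lambda):=\log\E\,e^{\lambda(f(Y)-\E f(Y))}$, the log-Sobolev inequality combined with the gradient bound gives $\frac{d}{d\lambda}\big(\psi(\lambda)/\lambda\big)\le 2\,\E_\lambda[W]$, where $\E_\lambda$ denotes expectation under the probability measure with density proportional to $e^{\lambda f}$ relative to the law of $Y$. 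Provided $\E_\lambda[W]\le K$ for all $0<\lambda\le\lambda_\star$ with $\lambda_\star$ of order $1/B$ and $K$ of order $\E W$, integration yields $\psi(\lambda)\le 2K\lambda^2$ on that range, and minimizing $e^{-\lambda t+\psi(\lambda)}$ over $\lambda\in(0,\lambda_\star]$ produces exactly the bound $\exp\!\big(-C\,(t^2/\E W\wedge t/B)\big)$.

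\medskip
The main obstacle is the tilted-measure estimate $\E_\lambda[W]\lesssim\E W$ for $\lambda$ of order $1/B$. One cannot simply decorrelate $W$ from $e^{\lambda f}$, because $W$ is \emph{not} dominated pointwise by $f$ (indeed $W$ can be of order $B^2$ at points where $f$ vanishes). I would instead argue by self-improvement: since $\sqrt W$ is $B$-Lipschitz, it is sub-Gaussian with proxy $B^2$, so $W$ concentrates around $(\E\sqrt W)^2\le\E W$ with sub-exponential fluctuations of scale $B\sqrt{\E W}\vee B^2$; plugging this into an inequality of the form $\E_\lambda[W]\le c_1\,\E W+c_2(\lambda B)^2\sup_{s\le\lambda}\E_s[W]$ and absorbing the last term gives $\sup_{s\le\lambda}\E_s[W]\le 2c_1\,\E W$ as soon as $\lambda B$ is a small enough absolute constant. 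The remaining points---attainment of the supremum over the compact set $F$ and rigor of the a.e.\ gradient identity---are routine. Alternatively, \eqref{eq:chaos_concentration} can be quoted as a (by now fairly standard) uniform/functional Hanson--Wright inequality, itself a consequence of the Gaussian log-Sobolev inequality; a cruder approach through a single $\varepsilon$-net of $F$ combined with Lemma~\ref{lem:normal-quad} does not recover \eqref{eq:chaos_concentration}, since the union bound over the net leaves an unwanted logarithmic cardinality term and would have to be replaced by a full chaining argument.
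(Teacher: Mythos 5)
Your plan is sound, but it is a genuinely different route from the paper's: the paper does not prove Lemma~\ref{lem:chaos} from first principles at all --- it observes that the variant with $\sup_{\bR\in F}\tr[\bR(YY^\top-\bI_r)]$ is established in the cited reference via the exponential Efron--Stein inequalities of Boucheron et al., and asserts that those arguments adapt. You instead give a self-contained Gaussian argument: write $Z=f(Y)$ with $f(y)=\sup_{\bR\in F}y^\top\bR y$, bound the a.e.\ gradient by $\|\nabla f(y)\|^2\le 4W(y)$, and run Herbst's argument from the Gaussian logarithmic Sobolev inequality, handling the random variance proxy $W$ by a self-improvement bound on the tilted expectation $\mathbb{E}_\lambda[W]$. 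Both are entropy-method proofs, but yours exploits the Gaussian structure directly (gradient bounds), whereas the Efron--Stein route works for general independent coordinates; for this lemma the Gaussian route is arguably cleaner and avoids relying on an ``adaptation'' of an external argument. One point you leave implicit but that is essential to close your self-improvement step: the sub-Gaussian fluctuation of $\sqrt{W}$ produces an additive error of order $B^2/\theta\cdot\theta = B^2$ (taking $\theta\asymp B^{-2}$ in the entropy-duality bound), and absorbing this into $C\,\mathbb{E}[W]$ requires $\mathbb{E}[W]\gtrsim B^2$. This does hold here, since for each fixed $\bR\in F$ one has $\mathbb{E}\,\tr(\bR YY^\top\bR)=\|\bR\|_F^2\ge\|\bR\|^2$, hence $\mathbb{E}[W]\ge B^2$; you should state this explicitly, as without it the claimed bound $\mathbb{E}_\lambda[W]\le c_1\mathbb{E}[W]+c_2(\lambda B)^2\sup_{s\le\lambda}\mathbb{E}_s[W]$ would carry an extra $B^2$ term that the final inequality could not tolerate. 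With that addition, and the (routine) justification that the Gaussian log-Sobolev inequality applies to the locally Lipschitz envelope $f$, your argument is complete and recovers \eqref{eq:chaos_concentration}; your closing remark that a naive $\varepsilon$-net union bound cannot achieve this is also correct.
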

A slight variation of this result where $Z$ is replaced by $\sup_{\bR \in F} \tr\left[\bR (YY^\top -\bI_r)\right]$ is proved in \cite{MR2662346} using the exponential Efron-Stein inequalities of~\cite{MR2123200}. Their arguments straightforwardly adapt to Lemma~\ref{lem:chaos}.

\begin{lem}[\cite{MR1863696}] \label{lem:concentration_vp_wishart}
Let $\bW$ be a standard Wishart matrix with parameters $(n,d)$ satisfying $n>d$. Then for any number $0<x<1$, 
\begin{eqnarray}
\mathbb{P}\left\{\lambda^{\rm max}(\bW) \geq
n\left(1+\sqrt{d/n}+\sqrt{2x/n}\right)^2 \right \} & \leq &e^{-x}\ 
,\nonumber	\\
\mathbb{P}\left\{\lambda^{\rm min}(\bW) \leq
n\left(1-\sqrt{d/n}-\sqrt{2x/n}\right)_+^2 \right\} & \leq
&e^{-x}\ . \nonumber
\end{eqnarray}
\end{lem}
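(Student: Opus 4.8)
The plan is to reduce the statement to a two-sided deviation bound for the extreme singular values of a rectangular Gaussian matrix, and then to obtain that bound by combining Gaussian concentration of measure with a Gaussian comparison (Gordon/Sudakov--Fernique) argument for the means. First I would write $\bW = \mathbf{G}^\top \mathbf{G}$, where $\mathbf{G}$ is an $n\times d$ matrix with i.i.d.\ $\cN(0,1)$ entries; this is the defining representation of a standard Wishart with parameters $(n,d)$, and since $n>d$ the matrix $\mathbf{G}$ has $d$ singular values $s_{\min}(\mathbf{G})\le\cdots\le s_{\max}(\mathbf{G})$, with $\lambda^{\rm max}(\bW)=s_{\max}(\mathbf{G})^2$ and $\lambda^{\rm min}(\bW)=s_{\min}(\mathbf{G})^2$. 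It therefore suffices to prove $\P\{s_{\max}(\mathbf{G})\ge \sqrt n+\sqrt d+t\}\le e^{-t^2/2}$ and $\P\{s_{\min}(\mathbf{G})\le \sqrt n-\sqrt d-t\}\le e^{-t^2/2}$ for every $t\ge 0$: taking $t=\sqrt{2x}$ and squaring the two events (using $(\cdot)_+$ in the lower bound, which is legitimate since $\lambda^{\rm min}(\bW)>0$ almost surely) reproduces exactly the two displayed inequalities, for all $x>0$ and in particular on $(0,1)$.

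Next I would control the fluctuations of the two singular values around their means. Viewing $\mathbf{G}$ as a point of $\bbR^{nd}$ equipped with the Euclidean norm, which coincides with the Frobenius norm on matrices, both maps $\mathbf{G}\mapsto s_{\max}(\mathbf{G})$ and $\mathbf{G}\mapsto s_{\min}(\mathbf{G})$ are $1$-Lipschitz: by Weyl's perturbation inequality for singular values, $|s_i(\mathbf{G})-s_i(\mathbf{G}')|\le \|\mathbf{G}-\mathbf{G}'\|_{\mathrm{op}}\le \|\mathbf{G}-\mathbf{G}'\|_F$ for each index $i$. The Gaussian concentration inequality then gives, for any $1$-Lipschitz $F$ and $Z\sim\cN(0,\bI_{nd})$, the sub-Gaussian tails $\P\{F(Z)\ge \E F(Z)+t\}\le e^{-t^2/2}$ and $\P\{F(Z)\le \E F(Z)-t\}\le e^{-t^2/2}$. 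Applying this to $F=s_{\max}$ and $F=s_{\min}$ reduces the whole problem to the two mean estimates $\E s_{\max}(\mathbf{G})\le \sqrt n+\sqrt d$ and $\E s_{\min}(\mathbf{G})\ge \sqrt n-\sqrt d$.

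Controlling these expectations is the heart of the matter. I would use the variational formulas $s_{\max}(\mathbf{G})=\max_{u\in \mathbb{S}^{d-1}}\max_{v\in\mathbb{S}^{n-1}} v^\top \mathbf{G} u$ and $s_{\min}(\mathbf{G})=\min_{u\in\mathbb{S}^{d-1}}\max_{v\in \mathbb{S}^{n-1}} v^\top \mathbf{G} u$, and compare the Gaussian field $X_{u,v}=v^\top\mathbf{G}u$ with the field $Y_{u,v}=g^\top v+h^\top u$, where $g\sim\cN(0,\bI_n)$ and $h\sim\cN(0,\bI_d)$ are independent. A direct second-moment computation yields the increment identity $\E(X_{u,v}-X_{u',v'})^2 = \E(Y_{u,v}-Y_{u',v'})^2 - 2(1-u^\top u')(1-v^\top v')$, in which the correction term is nonnegative by Cauchy--Schwarz; hence $X$ has everywhere-smaller increments than $Y$, with equality whenever $u=u'$. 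The first property is precisely the hypothesis of the Sudakov--Fernique inequality and gives $\E s_{\max}(\mathbf{G})\le \E\max_{u,v}Y_{u,v}=\E\|g\|+\E\|h\|$; the first property together with the equality-for-fixed-$u$ property are the hypotheses of Gordon's min--max comparison inequality and give $\E s_{\min}(\mathbf{G})\ge \E\min_u\max_v Y_{u,v}=\E\|g\|-\E\|h\|$, using $\max_{u,v}Y_{u,v}=\|g\|+\|h\|$ and $\min_u\max_v Y_{u,v}=\|g\|-\|h\|$.

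It then remains to convert these bounds into $\sqrt n\pm\sqrt d$. By Jensen, $\E\|g\|\le\sqrt n$ and $\E\|h\|\le\sqrt d$, which immediately gives the upper bound $\E s_{\max}(\mathbf{G})\le\sqrt n+\sqrt d$. For the lower bound I would invoke the elementary fact that the defect $m\mapsto \sqrt m-\E\|N_m\|$, with $N_m\sim\cN(0,\bI_m)$, is nonincreasing; since $n>d$ this yields $\sqrt n-\E\|g\|\le\sqrt d-\E\|h\|$, hence $\E\|g\|-\E\|h\|\ge \sqrt n-\sqrt d$ and $\E s_{\min}(\mathbf{G})\ge\sqrt n-\sqrt d$. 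The main obstacle in this scheme is the comparison step: one must set up the auxiliary field $Y$ correctly and verify the increment conditions so that Sudakov--Fernique and Gordon apply in the right directions; by contrast the Lipschitz/concentration step and the final reparametrization $t=\sqrt{2x}$ followed by squaring are routine. This is exactly the argument of Davidson and Szarek, so I would record the above as the structure of the proof and cite \cite{MR1863696} for the stated constants.
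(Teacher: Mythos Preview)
Your proposal is correct and in fact goes well beyond what the paper does: the paper states this lemma with a citation to \cite{MR1863696} and provides no proof of its own, treating it as a black-box tool. What you have written is a faithful sketch of the Davidson--Szarek argument itself---the reduction to singular values of a Gaussian matrix, the $1$-Lipschitz property plus Gaussian concentration for the fluctuations, and the Sudakov--Fernique/Gordon comparison with the decoupled field $Y_{u,v}=g^\top v+h^\top u$ for the means---so there is nothing to compare on the paper's side, and your final remark that one should simply cite \cite{MR1863696} is exactly what the authors do.
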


\subsection{Auxiliary results for Gaussian Markov random fields on the lattice}\label{sec:proofs_technique}

He we gather some technical tools and proofs for Gaussian Markov
random fields on the lattice.
Recall the notation introduced in Section~\ref{sec:prelim_gmrf}.

\begin{lem}
\label{lem:spectrum_gamma}
For any positive integer $h$ and $\phi\in \Phi_{h}$ with $\|\phi\|_1 <1$, we have that
if $\lambda$ is an eigenvalue of the covariance operator
$\bGamma(\phi)$, then
\[
\frac{\sigma_{\phi}^2}{1+\|\phi\|_1} \le \lambda \le
\frac{\sigma_{\phi}^2}{1-\|\phi\|_1}~.
\]
Also, we have
\beq\label{eq:lb_sigma}
\frac{\|\phi\|_2^2}{1+ \|\phi\|_1}\leq \frac{1 - \sigma^{2}_{\phi}}{\sigma^2_{\phi}}\leq \frac{\|\phi\|_2^2}{1-\|\phi\|_1}\ \quad \quad \text{ and }\quad \quad 1- \|\phi\|_1 \le  \sigma_{\phi}^2 \le 1~.
\eeq
\end{lem}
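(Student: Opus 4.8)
The plan is to exploit the fact that the process $Y$ is stationary on $\bbZ^d$, so the covariance operator $\bGamma(\phi)$ and its inverse are both convolution operators, and hence are simultaneously diagonalized by the Fourier transform. First I would recall from Section~\ref{sec:prelim_gmrf} that $\bGamma^{-1}(\phi)$ acts as convolution with the kernel that is $1/\sigma_\phi^2$ at the origin and $-\phi_{i-j}/\sigma_\phi^2$ for $1 \le |i-j|_\infty \le h$. Its symbol (the discrete Fourier transform of this kernel) is therefore
\[
g(\omega) = \frac{1}{\sigma_\phi^2}\Big( 1 + \sum_{1 \le |i|_\infty \le h} \phi_i \cos(\langle i, \omega\rangle)\Big), \qquad \omega \in (-\pi,\pi]^d,
\]
using $\phi_i = \phi_{-i}$ to combine the $\pm i$ terms into a cosine. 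The spectrum of $\bGamma(\phi)$ is then the closure of the range of $1/g(\omega)$, so it suffices to bound $g(\omega)$ from above and below. Since $|\cos(\langle i,\omega\rangle)| \le 1$, we get $1 - \|\phi\|_1 \le 1 + \sum_i \phi_i \cos(\langle i,\omega\rangle) \le 1 + \|\phi\|_1$, and dividing by $\sigma_\phi^2$ gives $(1-\|\phi\|_1)/\sigma_\phi^2 \le g(\omega) \le (1+\|\phi\|_1)/\sigma_\phi^2$; inverting yields the stated eigenvalue bounds $\sigma_\phi^2/(1+\|\phi\|_1) \le \lambda \le \sigma_\phi^2/(1-\|\phi\|_1)$.

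For the bounds on $\sigma_\phi^2$, I would use the normalization $\bGamma_{i,i}(\phi) = 1$, i.e.\ $\Var(Y_i) = 1$. On one hand, the conditional variance cannot exceed the marginal variance, so $\sigma_\phi^2 = \Var(Y_i \mid Y^{(-i)}) \le \Var(Y_i) = 1$. On the other hand, writing $\bGamma_{i,i} = 1$ as the integral of the spectral density $1/((2\pi)^d g(\omega))$ over $(-\pi,\pi]^d$ and invoking the lower bound $g(\omega) \ge (1-\|\phi\|_1)/\sigma_\phi^2$ gives $1 \le \sigma_\phi^2/(1-\|\phi\|_1)$, i.e.\ $\sigma_\phi^2 \ge 1 - \|\phi\|_1$ (this is where $\|\phi\|_1 < 1$ is needed to keep $g$ positive and the spectral density integrable). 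Finally, for the two-sided bound on $(1-\sigma_\phi^2)/\sigma_\phi^2$, I would relate this quantity to $\|\phi\|_2^2$ via the CAR representation \eqref{eq:conditional_definition}: taking variances in $Y_i = \sum_{j\in\bbN_h}\phi_j Y_{i+j} + \sigma_\phi Z_i$ with $Z_i \perp Y^{(-i)}$ gives $1 = \Var\big(\sum_j \phi_j Y_{i+j}\big) + \sigma_\phi^2$, so $1 - \sigma_\phi^2 = \phi^\top \bGamma_{\bbN_h} \phi$ where $\bGamma_{\bbN_h}$ is the covariance matrix of the neighbor values; sandwiching its eigenvalues between $\sigma_\phi^2/(1+\|\phi\|_1)$ and $\sigma_\phi^2/(1-\|\phi\|_1)$ (by the first part of the lemma, applied to the relevant submatrix — or more cleanly, noting these are also bounds on the spectrum of the full operator and hence on the Rayleigh quotient restricted to the neighbor block) yields $\sigma_\phi^2 \|\phi\|_2^2/(1+\|\phi\|_1) \le 1 - \sigma_\phi^2 \le \sigma_\phi^2\|\phi\|_2^2/(1-\|\phi\|_1)$, which is exactly \eqref{eq:lb_sigma} after dividing by $\sigma_\phi^2$.

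The only delicate point I anticipate is the identification of the spectrum of $\bGamma(\phi)$ with the range of the symbol: $\bGamma(\phi)$ is an operator on $\ell^2(\bbZ^d)$ (a Laurent/Toeplitz-type operator), and the statement of the lemma concerns its eigenvalues — so I should be a little careful to phrase this either in terms of the spectrum (for the infinite operator, the eigenvalue bounds are really bounds on $\mathrm{spec}(\bGamma(\phi)) \subset [\sigma_\phi^2/(1+\|\phi\|_1),\, \sigma_\phi^2/(1-\|\phi\|_1)]$) or, equivalently, in terms of the eigenvalues of the finite principal submatrices $\bGamma_S(\phi)$, which by Cauchy interlacing / the quadratic-form characterization are bounded by the same quantities. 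Since the paper only ever uses these bounds for finite submatrices (via $\bGamma_S$), the cleanest route is to state and prove the inequality $v^\top \bGamma(\phi) v \in [\tfrac{\sigma_\phi^2}{1+\|\phi\|_1},\tfrac{\sigma_\phi^2}{1-\|\phi\|_1}]\|v\|_2^2$ for all finitely supported $v$, via Parseval applied to the symbol $g$, which immediately gives the eigenvalue bounds for every $\bGamma_S(\phi)$ and, by density, for the operator itself. Everything else is a routine manipulation of the spectral density and the CAR identity.
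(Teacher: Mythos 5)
Your proposal is correct and follows essentially the same route as the paper: the eigenvalue bounds come from controlling $\sigma_\phi^2\bGamma^{-1}(\phi)-\bI$ by $\|\phi\|_1$ (you do this via the Fourier symbol of the convolution operator, the paper via the row-sum bound on the operator norm --- equivalent for stationary kernels), and \eqref{eq:lb_sigma} comes from the identical variance identity $1-\sigma_\phi^2=\Var\big(\sum_j\phi_jY_{i+j}\big)=\phi^\top\bGamma_{\bbN_h}(\phi)\phi$ sandwiched by those eigenvalue bounds. Your derivation of $\sigma_\phi^2\ge 1-\|\phi\|_1$ directly from $1=\Var(Y_i)\le\lambda^{\max}(\bGamma(\phi))$ is a slightly cleaner shortcut than the paper's (which rearranges the $\|\phi\|_2^2$ inequality and uses $\|\phi\|_2^2\le\|\phi\|_1$), and your care about eigenvalues versus spectrum of the infinite operator is a point the paper glosses over.
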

\begin{proof}
Recall that $\|\cdot\|$ denotes the $\ell^2 \to \ell^2$ operator norm.
First note that by the definition of $\phi$,
$\sigma_\phi^2 \bGamma^{-1}(\phi)- \bI = (\phi_{i-j})_{i,j \in \bbZ^d}$, and therefore 
\beq \label{diag_dominant}
\|\sigma_\phi^2 \bGamma^{-1}(\phi) - \bI\| \le  \|\phi\|_1~,
\eeq
where whe used the bound $\|\bA\|\leq \sup_{i\in \bbZ^d}\sum_{j\in \bbZ^d}|\bA_{ij}|$. 
This implies that the largest eigenvalue of $\bGamma(\phi)$ is bounded
by $\sigma^2_{\phi}/(1-\|\phi\|_1)$ if $\|\phi\|_1<1$ and that the
smallest eigenvalue of $\bGamma(\phi)$ is at least $\sigma^2_{\phi}/(1+\|\phi\|_1)$.
Considering the conditional regression of $Y_i$ given $Y_{-i}$
mentioned above, that is, 
\[Y_i= -\sum_{1\leq |j|_{\infty}\leq h}\phi_j Y_{i+j}+\epsilon_i
\]
(with $\epsilon_i$ being standard normal independent of the $Y_j$ for
$j\neq i$)
and taking the variance of both sides, we obtain 
\[1-\sigma^2_{\phi}= \Var\left[ \sum_{1<|j|_{\infty}\leq h}\phi_j Y_{i+j}\right] = \phi^\top \bGamma(\phi) \phi \le \|\bGamma(\phi)\| \|\phi\|_2^2\leq \frac{\|\phi\|_2^2}{1-\|\phi\|_1}\sigma^2_{\phi}~,
\]
and therefore 
\beqn
1 - \sigma^{2}_{\phi}\leq \frac{\|\phi\|_2^2}{1-\|\phi\|_1}\sigma^{2}_{\phi}\ .
\eeqn
Rearranging this inequality and using the fact that $\|\phi\|_2^2 \le
\|\phi\|_1^2 \le \|\phi\|_1$, we conclude that $\sigma^{2}_{\phi}\geq 1-\|\phi\|_1$. The remaining bound $\frac{\|\phi\|_2^2}{1+ \|\phi\|_1}\leq \frac{1 - \sigma^{2}_{\phi}}{\sigma^2_{\phi}}$ is obtained similarly.
\end{proof}

Recall that for any $v\in \mathbb{Z}^d$, $\gamma_v$ is the correlation between $Y_i$ and $Y_{i+v}$ and is therefore equal to $\bGamma_{i, i+v}$. This definition does not depend on the node $i$ since $\bGamma$ is the covariance of a stationary process.

\begin{lem}\label{lem:conditional_variance}
For any $h$ and any $\phi\in \Phi_{h}$, let $Y\sim \cN(0,\bGamma(\phi))$. As long as $\|\phi\|_1<1$, the $l_2$ norm of the correlations satisfies
\beq\label{eq:upper_norm_correlation}
\sum_{v\neq 0}\gamma_v^2\leq   \frac{\|\phi\|^2_2}{(1-\|\phi\|_1)^2}+ \left(\frac{\|\phi\|_2^2\sigma_{\phi}^2}{(1-\|\phi\|_1)^2}\right)^2
\eeq
\end{lem}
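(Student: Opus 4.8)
The plan is to pass to the spectral (Fourier) domain, where the stationary operator $\bGamma(\phi)$ becomes a multiplication operator. By \eqref{eq:conditions} and the definition $\sigma_\phi^{2}=1/\bGamma^{-1}_{i,i}(\phi)$, the operator $\sigma_\phi^{2}\bGamma^{-1}(\phi)$ is the convolution operator on $\ell^2(\bbZ^d)$ with kernel $(\delta_{i,j}-\phi_{i-j})_{i,j}$, where $\phi$ is extended by $0$ outside $\bbN_h$ and $\phi_0:=0$. Since $\|\phi\|_1<1$, this operator is boundedly invertible, so $\bGamma(\phi)$ is itself a convolution operator, and its Fourier symbol --- the spectral density of $Y$ --- is
\[
\widehat\gamma(\omega)=\frac{\sigma_\phi^{2}}{1-\widehat\phi(\omega)},\qquad \widehat\phi(\omega):=\sum_{1\le|v|_\infty\le h}\phi_v\cos\langle v,\omega\rangle,\qquad \omega\in(-\pi,\pi]^d .
\]
As $|\widehat\phi(\omega)|\le\|\phi\|_1$, we have $0<\widehat\gamma(\omega)\le\sigma_\phi^{2}/(1-\|\phi\|_1)$, so $\widehat\gamma$ is square-integrable on the torus and $(\gamma_v)_{v\in\bbZ^d}$ is precisely its sequence of Fourier coefficients. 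Parseval's identity then gives $\sum_v\gamma_v^{2}=(2\pi)^{-d}\int_{(-\pi,\pi]^d}\widehat\gamma^{2}$, and subtracting the $v=0$ term $\gamma_0^{2}=1$ together with $(2\pi)^{-d}\int\widehat\gamma=\gamma_0=1$ yields
\[
\sum_{v\ne0}\gamma_v^{2}=\frac1{(2\pi)^d}\int_{(-\pi,\pi]^d}\big(\widehat\gamma(\omega)-1\big)^{2}\,d\omega .
\]

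Next I would simplify the integrand. Writing $c:=1-\sigma_\phi^{2}$, which lies in $[0,1)$ by Lemma~\ref{lem:spectrum_gamma}, a one-line computation gives $\widehat\gamma(\omega)-1=\big(\widehat\phi(\omega)-c\big)/\big(1-\widehat\phi(\omega)\big)$. Bounding the denominator uniformly by $1-\widehat\phi(\omega)\ge1-\|\phi\|_1>0$ and expanding the square, Parseval applied this time to the finitely supported kernel $\phi$ gives $(2\pi)^{-d}\int\widehat\phi^{2}=\sum_v\phi_v^{2}=\|\phi\|_2^{2}$ and $(2\pi)^{-d}\int\widehat\phi=\phi_0=0$, whence $(2\pi)^{-d}\int(\widehat\phi-c)^{2}=\|\phi\|_2^{2}+c^{2}$. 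Altogether,
\[
\sum_{v\ne0}\gamma_v^{2}\ \le\ \frac{\|\phi\|_2^{2}+c^{2}}{(1-\|\phi\|_1)^{2}} .
\]

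To conclude, I would feed in the estimate on $c=1-\sigma_\phi^{2}$ from \eqref{eq:lb_sigma}, namely $1-\sigma_\phi^{2}\le\sigma_\phi^{2}\|\phi\|_2^{2}/(1-\|\phi\|_1)$: squaring and dividing by $(1-\|\phi\|_1)^{2}$ gives $c^{2}/(1-\|\phi\|_1)^{2}\le\big(\sigma_\phi^{2}\|\phi\|_2^{2}/(1-\|\phi\|_1)^{2}\big)^{2}$, and plugging this into the previous display proves the claim. The argument is essentially computational; the only step requiring genuine care is the first one --- identifying $\bGamma(\phi)$ with the convolution operator of symbol $\sigma_\phi^{2}/(1-\widehat\phi)$ and invoking Parseval --- which is the classical spectral representation of a stationary Gaussian field (see \cite[Sect.~1.3]{MR1344683} or \cite[Sect.~2.6]{MR2130347}) and rests on $\|\phi\|_1<1$, since then $\big(\bI-(\phi_{i-j})_{i,j}\big)^{-1}=\sum_{k\ge0}(\phi_{i-j})_{i,j}^{k}$ converges in operator norm to the convolution operator of symbol $1/(1-\widehat\phi)$. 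Incidentally, tracking $(\gamma_v)_{v\ne0}$ directly as the $\ell^2(\bbZ^d\setminus\{0\})$-solution of the fixed-point equation $\gamma_v=\phi_v\IND{1\le|v|_\infty\le h}+\sum_{u\ne0,\,u+v\ne0}\phi_u\gamma_{u+v}$, whose linear part has operator norm at most $\|\phi\|_1$, would even remove the $c^{2}$ term and yield the sharper bound $\sum_{v\ne0}\gamma_v^{2}\le\|\phi\|_2^{2}/(1-\|\phi\|_1)^{2}$; the weaker form stated above is, however, all that is needed downstream.
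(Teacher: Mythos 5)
Your proof is correct and follows essentially the same route as the paper's: identify the spectral density $\sigma_\phi^2/(1-\widehat\phi)$, apply Parseval to $\widehat\gamma-1$, bound the denominator uniformly by $1-\|\phi\|_1$, and control $c=1-\sigma_\phi^2$ via \eqref{eq:lb_sigma}; your rearrangement $\widehat\gamma-1=(\widehat\phi-c)/(1-\widehat\phi)$ is just a cleaner packaging of the paper's computation and yields the identical intermediate bound $(\|\phi\|_2^2+c^2)/(1-\|\phi\|_1)^2$.
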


\begin{proof}
In order to compute $\|\gamma\|^2_2$, we use the spectral density of $Y$ defined by 
\[f(\omega_1,\ldots,\omega_d)=\frac{1}{(2\pi)^d}\sum_{(v_1,\ldots,v_d)\in \mathbb{Z}^d}\gamma_{v_1,\ldots v_d}\exp\left(\iota\sum_{i=1}^dv_i\omega_i\right)\ , \quad (\omega_1,\ldots,\omega_d)\in (-\pi,\pi]^d\ .\]
Following \cite[][Sect.1.3]{MR1344683} or
\cite[][Sect.2.6.5]{MR2130347}, we express the spectral density in
terms of $\phi$ and $\sigma_{\phi}^2$:
\[\frac{1}{f(\omega_1,\ldots, \omega_d)}= \frac{(2\pi)^d}{\sigma_{\phi}^2}\left[1- \sum_{v, 1\leq |v|_{\infty}\leq h\in \mathbb{Z}^d}\phi_{v}e^{\iota \langle v,\omega\rangle}\right]\ ,\]
where $\langle \cdot , \cdot\rangle$ denotes the scalar product in $\mathbb{R}^d$. As a consequence, 
\[|f(\omega_1,\ldots, \omega_d)|\leq \sigma_{\phi}^2[(2\pi)^d(1-\|\phi\|_1)]^{-1}~.\] Relying on Parseval formula, we conclude 
\beqn
\sum_{v\neq 0}\gamma_v^2&=& (2\pi)^d\int_{[-\pi;\pi]^d}\Big[f(\omega_1,\ldots,\omega_d) -\frac{1}{(2\pi)^d}\Big]^2d\omega_1\ldots d\omega_d\\
&\leq  & \frac{\sigma_{\phi}^4}{(2\pi)^d(1-\|\phi\|_1)^2}\int_{[-\pi;\pi]^d}\Big|\frac{1}{(2\pi)^df(\omega_1,\ldots, \omega_d)} -1\Big|^2d\omega_1\ldots d\omega_d\\
&\leq & \frac{\sigma_{\phi}^4}{(2\pi)^d(1-\|\phi\|_1)^2}\int_{[-\pi;\pi]^d}\Big|\frac{1}{\sigma_{\phi}^2} - 1 - \frac{1}{\sigma_{\phi}^2}\sum_{v, 1\leq |v|_{\infty}\leq h\in \mathbb{Z}^d} \phi_{v}e^{\iota \langle v,\omega\rangle}\Big|^2d\omega_1\ldots d\omega_d\\
&\leq & \frac{\sigma_{\phi}^4}{(2\pi)^d(1-\|\phi\|_1)^2}\left[(2\pi)^d\left(\frac{1}{\sigma^2_{\phi}}-1\right)^2+\sum_{v, 1\leq |v|_{\infty}} \frac{(2\pi)^d\phi_v^2}{\sigma_{\phi}^4} \right]\\
& \leq & \left(\frac{1-\sigma_{\phi}^2}{1-\|\phi\|_1}\right)^2+\frac{\|\phi\|^2_2}{(1-\|\phi\|_1)^2}\\
& \leq & \left(\frac{\|\phi\|_2^2\sigma_{\phi}^2}{(1-\|\phi\|_1)^2}\right)^2+  \frac{\|\phi\|^2_2}{(1-\|\phi\|_1)^2}\ , 
\eeqn
where we used \eqref{eq:lb_sigma} in the last line. 
\end{proof}

\begin{lem}[Conditional representation]\label{lem:covariance_residuals}
For any $h$ and any $\phi\in \Phi_{h}$, let $Y\sim
\cN(0,\bGamma(\phi))$. Then for any $i\in \bbZ^d$, the random variable
$\epsilon_i$ defined by the conditional regression $Y_i=\sum_{v\in
  \bbN_h}\phi_vY_{i+v} +\epsilon_i$ satisfies that
\begin{enumerate}
 \item $\epsilon_i$ is independent of all $X_j\ , j\neq i$ and $\Cov(\epsilon_i,X_i)=\Var(\epsilon_i)=\sigma_{\phi}^2$.
\item For any $i\neq j$, $\Cov(\epsilon_i,\epsilon_j)=-\phi_{i-j}\sigma_{\phi}^2$ if $|i-j|_{\infty}\leq h$ and 0 otherwise.
\end{enumerate}
\end{lem}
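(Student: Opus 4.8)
The plan is to read both claims off the defining relation \eqref{eq:conditions}. Let $\boldsymbol{\Phi}$ be the symmetric operator on $\bbZ^d$ with entries $\boldsymbol{\Phi}_{i,j}=\phi_{i-j}$ when $1\le|i-j|_\infty\le h$ and $\boldsymbol{\Phi}_{i,j}=0$ otherwise; it is \emph{banded}, with at most $|\bbN_h|+1$ nonzero entries per row. Combined with $\sigma_\phi^2=1/\bGamma^{-1}_{i,i}(\phi)$, relation \eqref{eq:conditions} says exactly that $\sigma_\phi^2\,\bGamma^{-1}(\phi)=\bI-\boldsymbol{\Phi}$. Since $\phi_{-v}=\phi_v$, it follows that
\[
\epsilon_i=Y_i-\sum_{v\in\bbN_h}\phi_v Y_{i+v}=\sum_{j\in\bbZ^d}(\bI-\boldsymbol{\Phi})_{i,j}\,Y_j=\sigma_\phi^2\sum_{j\in\bbZ^d}\bGamma^{-1}_{i,j}(\phi)\,Y_j\ ,
\]
and by bandedness this is a \emph{finite} linear combination of the $Y_j$'s; in particular $(\epsilon,Y)$ is jointly Gaussian and no summability issue arises in the sums below.

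First I would prove the first claim. Using the display above, for every $k\in\bbZ^d$,
\[
\Cov(\epsilon_i,Y_k)=\sigma_\phi^2\sum_{j\in\bbZ^d}\bGamma^{-1}_{i,j}(\phi)\,\bGamma_{j,k}(\phi)=\sigma_\phi^2\,\big(\bGamma^{-1}(\phi)\bGamma(\phi)\big)_{i,k}=\sigma_\phi^2\,\IND{i=k}\ ,
\]
where the middle sum is finite (only $|i-j|_\infty\le h$ contribute) and equals $\IND{i=k}$ because $\sigma_\phi^{-2}(\bI-\boldsymbol{\Phi})$ is the inverse of $\bGamma(\phi)$, so the entrywise product with the (square-summable, by Lemma~\ref{lem:conditional_variance}) columns of $\bGamma(\phi)$ returns the identity. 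Taking $k\ne i$ gives $\Cov(\epsilon_i,Y_k)=0$, which by joint Gaussianity promotes to independence of $\epsilon_i$ from $(Y_j)_{j\ne i}$. Taking $k=i$ gives $\Cov(\epsilon_i,Y_i)=\sigma_\phi^2$, and then
\[
\Var(\epsilon_i)=\Cov\Big(\epsilon_i,\,Y_i-\sum_{v\in\bbN_h}\phi_v Y_{i+v}\Big)=\Cov(\epsilon_i,Y_i)=\sigma_\phi^2\ ,
\]
using $\Cov(\epsilon_i,Y_{i+v})=0$ for $v\ne0$.

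Next I would prove the second claim. Fix $i\ne j$, expand the second residual in its defining form, and plug in the covariances just found:
\[
\Cov(\epsilon_i,\epsilon_j)=\Cov(\epsilon_i,Y_j)-\sum_{w\in\bbN_h}\phi_w\,\Cov(\epsilon_i,Y_{j+w})=-\sigma_\phi^2\sum_{w\in\bbN_h}\phi_w\,\IND{j+w=i}\ ,
\]
because $\Cov(\epsilon_i,Y_j)=0$ and $\Cov(\epsilon_i,Y_{j+w})=\sigma_\phi^2\IND{j+w=i}$. The remaining sum is nonzero precisely when $i-j\in\bbN_h$, i.e.\ $1\le|i-j|_\infty\le h$, in which case it equals $-\phi_{i-j}\sigma_\phi^2$, and it is $0$ otherwise, which is the assertion.

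I do not expect a genuine obstacle here: the argument is elementary covariance bookkeeping once one observes that the precision operator $\bGamma^{-1}(\phi)$ is banded, which both makes $\epsilon_i$ a finite combination of the $Y_j$'s and legitimizes using $\bGamma^{-1}(\phi)\bGamma(\phi)=\bI$ entrywise. The single point deserving a line of care is precisely this manipulation of infinite operators on $\ell^2(\bbZ^d)$, valid because $\bGamma(\phi)$ is a bounded, boundedly invertible convolution operator for $\phi\in\Phi_h$ (its symbol $\sigma_\phi^2/(1-\widehat{\phi}(\omega))$ and its reciprocal are bounded on the torus). One could instead bypass operators via the spectral representation of the proof of Lemma~\ref{lem:conditional_variance}: with $\widehat{\phi}(\omega)=\sum_{1\le|v|_\infty\le h}\phi_v e^{\iota\langle v,\omega\rangle}$ and $\widehat{\gamma}(\omega)=\sum_{v\in\bbZ^d}\gamma_v e^{\iota\langle v,\omega\rangle}$, that proof yields $\widehat{\gamma}(\omega)\big(1-\widehat{\phi}(\omega)\big)=\sigma_\phi^2$ on the torus, whence the cross-spectrum of $\epsilon=(\bI-\boldsymbol{\Phi})Y$ with $Y$ is the constant $\sigma_\phi^2$ (giving $\Cov(\epsilon_i,Y_k)=\sigma_\phi^2\IND{i=k}$) and the spectrum of $\epsilon$ is $\big(1-\widehat{\phi}(\omega)\big)^2\widehat{\gamma}(\omega)=\sigma_\phi^2-\sigma_\phi^2\widehat{\phi}(\omega)$, whose inverse Fourier coefficients are exactly the claimed covariances of $\epsilon$.
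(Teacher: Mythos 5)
Your proof is correct and follows essentially the same route as the paper: the heart of the argument in both cases is the identity $\Cov(\epsilon_i,Y_k)=\sigma_\phi^2\IND{i=k}$ followed by the same covariance bookkeeping $\Cov(\epsilon_i,\epsilon_j)=\Cov(\epsilon_i,Y_j)-\sum_{w}\phi_w\Cov(\epsilon_i,Y_{j+w})$ for Part 2. The only difference is that where the paper simply cites the classical conditional-regression representation for Gaussian vectors to get the independence of $\epsilon_i$ from $Y^{(-i)}$ and the value of $\Var(\epsilon_i)$, you derive these facts directly from the banded precision identity $\sigma_\phi^2\,\bGamma^{-1}(\phi)=\bI-\boldsymbol{\Phi}$ (with the appropriate care about entrywise products of operators on $\ell^2(\bbZ^d)$), which is a legitimate and somewhat more self-contained presentation of the same argument.
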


\begin{proof}
 The first independence property is a classical consequence of the
 conditional regression representation for Gaussian random vectors,
 see, for example, \cite{MR1419991}. Since $\Var(\epsilon_i)$ is the conditional variance of $Y_i$ given $Y^{(-i)}$, it equals $[(\bGamma^{-1}(\phi))_{i,i}]^{-1}=\sigma_{\phi}^2$. Furthermore, 
\[\Cov(\epsilon_i,Y_i) = \Var(\epsilon_i)+ \sum_{v\in \bbN_h}\phi_j\Cov(\epsilon_i,Y_{i+v})= \Var(\epsilon_i)\ ,\]
by the independence of $\epsilon_i$ and $Y^{(-i)}$. Finally, consider any $i\neq j$, 
\beqn
\Cov(\epsilon_i,\epsilon_j)= \Cov(\epsilon_i,Y_j) - \sum_{v\in \bbN_h}\phi_v\Cov(\epsilon_i,Y_{j+v})\ ,
\eeqn 
where all the terms are equal to zero with the possible exception of $v=i-j$. The result follows.
\end{proof}

\begin{lem}[Comparison of $\bGamma^{-1}(\phi)$ and  $\bGamma_S^{-1}(\phi)$]
\label{lem:control_Gamma_S}
As long as $\|\phi\|_1<1$, the following properties hold:
\begin{enumerate}
 \item If $i\in S^h$ or if $j \in S^h$, then $(\bGamma_S^{-1}(\phi))_{i,j}= (\bGamma^{-1}(\phi))_{i,j}$.
\item If $i\in S^h$ and $j\in \Delta_h(S)$, then $1\leq (\bGamma_S^{-1}(\phi))_{j,j}\leq (\bGamma_S^{-1}(\phi))_{i,i}$.
\item If $i\in \Delta_h(S)$, then $\sum_{j\in S:j\neq i}(\bGamma_S^{-1}(\phi))_{i,j}^2\leq  \frac{2\|\phi\|_2^2}{(1-\|\phi\|_1)^3}$.
\end{enumerate}
\end{lem}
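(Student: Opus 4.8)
The plan is to read every entry of $\bGamma_S^{-1}(\phi)$ off the linear regression of one coordinate of $Y$ on the others inside $S$. For a nonsingular Gaussian vector $(Y_s : s\in S)$ with covariance $\bGamma_S$ one has, for $i\in S$ and $j\in S\setminus i$,
\[
(\bGamma_S^{-1})_{i,i} = \frac{1}{\Var(Y_i\mid Y_{S\setminus i})}, \qquad (\bGamma_S^{-1})_{i,j} = -\,\beta^{(S)}_{i,j}\,(\bGamma_S^{-1})_{i,i},
\]
where $\beta^{(S)}_i := \bGamma_{S\setminus i}^{-1}\bGamma_{S\setminus i,i}$ is the vector of coefficients of the regression of $Y_i$ onto $(Y_j:j\in S\setminus i)$. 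With this dictionary, Property~1 becomes a statement about which variables enter that regression, Property~2 follows from monotonicity of the conditional variance, and Property~3 reduces to estimating $\|\beta^{(S)}_i\|_2$ and $\Var(Y_i\mid Y_{S\setminus i})$.

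For Property~1, fix $i\in S^h$. By definition of the $h$-interior, $i+\bbN_h\subseteq S$, hence $i+\bbN_h\subseteq S\setminus i$. By Lemma~\ref{lem:covariance_residuals}, the residual $\epsilon_i = Y_i - \sum_{v\in\bbN_h}\phi_v Y_{i+v}$ is independent of $Y^{(-i)}$, in particular of $(Y_j:j\in S\setminus i)$; since $\sum_{v\in\bbN_h}\phi_v Y_{i+v}$ is itself a linear function of $(Y_j:j\in S\setminus i)$, this identifies it as $\EXP[Y_i\mid Y_{S\setminus i}]$. Thus $\beta^{(S)}_{i,j} = \phi_{i-j}$ when $1\le|i-j|_\infty\le h$ and $\beta^{(S)}_{i,j}=0$ otherwise, while $\Var(Y_i\mid Y_{S\setminus i}) = \Var(\epsilon_i) = \sigma_\phi^2$. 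Feeding this into the dictionary and comparing with \eqref{eq:conditions} (together with $\sigma_\phi^2 = 1/\bGamma^{-1}_{i,i}(\phi)$) gives $(\bGamma_S^{-1})_{i,j} = (\bGamma^{-1}(\phi))_{i,j}$ for every $j$; the case $j\in S^h$ then follows because both matrices are symmetric. Property~2 is immediate afterwards: for $i\in S^h$, Property~1 gives $(\bGamma_S^{-1})_{i,i}=1/\sigma_\phi^2$, whereas for $j\in\Delta_h(S)$, conditioning on the strictly larger collection $Y^{(-j)}$ can only shrink the conditional variance, so $\sigma_\phi^2 = \Var(Y_j\mid Y^{(-j)}) \le \Var(Y_j\mid Y_{S\setminus j}) \le \Var(Y_j) = 1$, which yields at once $(\bGamma_S^{-1})_{j,j} = 1/\Var(Y_j\mid Y_{S\setminus j}) \le 1/\sigma_\phi^2 = (\bGamma_S^{-1})_{i,i}$ and $(\bGamma_S^{-1})_{j,j}\ge 1$.

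Property~3 is the only part requiring a genuine estimate, and I expect it to be the main obstacle: the difficulty is to keep the exponent of $1-\|\phi\|_1$ equal to three. I would start from $\sum_{j\in S,\,j\neq i}(\bGamma_S^{-1})_{i,j}^2 = (\bGamma_S^{-1})_{i,i}^2\,\|\beta^{(S)}_i\|_2^2$ with $(\bGamma_S^{-1})_{i,i} = 1/\Var(Y_i\mid Y_{S\setminus i}) \le 1/\sigma_\phi^2$ (using $\Var(Y_i\mid Y_{S\setminus i})\ge\Var(Y_i\mid Y^{(-i)}) = \sigma_\phi^2$, valid for every $i$). The delicate point is bounding $\|\beta^{(S)}_i\|_2^2$ efficiently; the naive chain $\|\beta^{(S)}_i\|_2^2\le\|\bGamma_{S\setminus i}^{-1}\|^2\sum_{v\neq 0}\gamma_v^2$ is wasteful, so instead I would use the Loewner bound $\bGamma_{S\setminus i}^{-2}\preceq\|\bGamma_{S\setminus i}^{-1}\|\,\bGamma_{S\setminus i}^{-1}$ to write
\[
\|\beta^{(S)}_i\|_2^2 = \bGamma_{i,S\setminus i}\bGamma_{S\setminus i}^{-2}\bGamma_{S\setminus i,i} \le \|\bGamma_{S\setminus i}^{-1}\|\,\bGamma_{i,S\setminus i}\bGamma_{S\setminus i}^{-1}\bGamma_{S\setminus i,i} = \|\bGamma_{S\setminus i}^{-1}\|\,\bigl(1-\Var(Y_i\mid Y_{S\setminus i})\bigr) \le \|\bGamma_{S\setminus i}^{-1}\|\,(1-\sigma_\phi^2),
\]
recognizing $\bGamma_{i,S\setminus i}\bGamma_{S\setminus i}^{-1}\bGamma_{S\setminus i,i}$ as $\Var(\EXP[Y_i\mid Y_{S\setminus i}]) = 1-\Var(Y_i\mid Y_{S\setminus i})$.

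To finish, since $\bGamma_{S\setminus i}$ is a principal submatrix of $\bGamma(\phi)$, Lemma~\ref{lem:spectrum_gamma} gives $\|\bGamma_{S\setminus i}^{-1}\|\le(1+\|\phi\|_1)/\sigma_\phi^2$, and \eqref{eq:lb_sigma} gives $1-\sigma_\phi^2\le\|\phi\|_2^2\sigma_\phi^2/(1-\|\phi\|_1)$ together with $\sigma_\phi^2\ge 1-\|\phi\|_1$. Multiplying the three bounds and using $\|\phi\|_1<1$,
\[
\sum_{j\in S,\,j\neq i}(\bGamma_S^{-1})_{i,j}^2 \le \frac{1}{\sigma_\phi^4}\cdot\frac{1+\|\phi\|_1}{\sigma_\phi^2}\cdot\frac{\sigma_\phi^2\|\phi\|_2^2}{1-\|\phi\|_1} = \frac{(1+\|\phi\|_1)\|\phi\|_2^2}{\sigma_\phi^4(1-\|\phi\|_1)} \le \frac{2\|\phi\|_2^2}{(1-\|\phi\|_1)^3},
\]
which is exactly the claimed bound. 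The crux is precisely this last chain: squaring $\|\bGamma_{S\setminus i}^{-1}\|$ and feeding in Lemma~\ref{lem:conditional_variance} would inflate the power of $(1-\|\phi\|_1)^{-1}$ well beyond three; replacing $\sum_{v\neq 0}\gamma_v^2$ by the ``explained-variance'' identity $1-\Var(Y_i\mid Y_{S\setminus i})$ and using the Loewner bound instead of a crude Cauchy--Schwarz step is what keeps the exponent tight.
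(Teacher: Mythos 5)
Your proof is correct and follows essentially the same route as the paper: Parts 1 and 2 via the conditional-regression characterization of $\bGamma_S^{-1}$ and monotonicity of conditional variances, and Part 3 via the explained-variance identity $1-\Var(Y_i\mid Y_{S\setminus i})=\Var(\EXP[Y_i\mid Y_{S\setminus i}])$ combined with the spectral bounds of Lemma~\ref{lem:spectrum_gamma}. Your Loewner step $\bGamma_{S\setminus i}^{-2}\preceq\|\bGamma_{S\setminus i}^{-1}\|\,\bGamma_{S\setminus i}^{-1}$ is just a repackaging of the paper's lower bound $\beta^{\top}\bGamma_{S}\beta\geq\lambda^{\min}(\bGamma_S)\|\beta\|_2^2$, so the two arguments are equivalent.
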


\begin{proof}
We prove each part in turn.

{\em Part 1.}
Consider $i \in S^h$ and any $j \in S$. By the Markov property,
conditionally to $(Y_{i+k},\ 1\leq |k|_{\infty}\leq h)$, $Y_i$ is
independent of all the remaining variables. Since all vertices $i+k$
with $1\leq |k|_{\infty}\leq h$ belong to $S$,  the conditional
distribution of $Y_{i}$ given $Y^{(-i)}$ is the same as the conditional
distribution of $Y_{i}$ given $(Y_{j}, j\in S\setminus\{i\})$. This
conditional distribution characterizes the $i$-th row of the inverse covariance matrix
$\bGamma^{-1}_S$. Also,
the conditional variance of $Y_i$
given $Y^{(-i)}$ is $[(\bGamma^{-1}(\phi))_{i,i}]^{-1}$ and the conditional variance of $Y_i$ given $Y_{S}$ is $[(\bGamma_S^{-1}(\phi))_{i,i}]^{-1}$. Furthermore, $-(\bGamma^{-1}(\phi))_{i,j}/(\bGamma^{-1}(\phi))_{i,i}$ is the $j$-th parameter of the condition regression of $Y_i$ given $Y^{(i)}$,
and therefore we conclude that
 $(\bGamma^{-1}(\phi))_{i,i}= (\sigma^2_{\phi})^{-1}=(\bGamma_S^{-1}(\phi))_{i,i} $ and $(\bGamma^{-1}(\phi))_{i,j}/(\bGamma^{-1}(\phi))_{i,i}= -\phi_{i-j} = (\bGamma_S^{-1}(\phi))_{i,j}/(\bGamma_S^{-1}(\phi))_{i,i}$.

{\em Part 2.}
Consider any vertex $i\in S^h$ and $j\in \Delta_h(S)$. Since
$1/\bGamma_S^{-1}(\phi))_{j,j}$ and $1/\bGamma_S^{-1}(\phi))_{j,j}$
are the conditional variances of $Y_i$ and $Y_j$ given $Y_{k}, k \in
S\setminus \{j\}$ and $Y_{k}, k \in S\setminus \{i\}$, respectively, we have
\beqn
1/\bGamma_S^{-1}(\phi))_{j,j}
&=& \Var\big(Y_j \big| Y_{k}: k \in S\setminus \{j\}\big)\\ 
&\geq& \Var\big(Y_j
\big|Y^{(-j)} \big) \\ &= & \Var\big(Y_i
\big|Y^{(-i)} \big)\quad\quad \quad  \text{(by stationarity of $Y$)}\\ & = & \Var\big(Y_i
\big|Y_k: k\in S\setminus\{i\}\big)\quad \quad \text{(since the neighborhood of $i$ is included in S)}\\
&= &
1/\bGamma_S^{-1}(\phi))_{i,i}~ .
\eeqn

{\em Part 3.}
Consider $i\in \Delta_h(S)$. The vector
$(\bGamma_S^{-1}(\phi))_{i,-i}\defeq \left(-(\bGamma_S^{-1}(\phi))_{i,j}/
  (\bGamma_S^{-1}(\phi))_{i,i}\right)_{j\in S: j\neq i}$ is formed by
the regression coefficients of $Y_{i}$ on $(Y_{j}, j\in
S\setminus\{i\})$. Since the conditional variance of $Y_{i}$ given
$(Y_{j}, j\in S\setminus\{i\})$ is at least $\sigma_{\phi}^2$ (by
Parts 1 and 2), we get
\beqn
 1 - \sigma_{\phi}^2  &\geq& 1 - \Var\left(Y_i|Y_j: \ j\in
   S\setminus\{i\}\right)\\ 
&= &\Var\Big(\E\left\{Y_i|Y_j,\ j\in S\setminus\{i\}\right\}\Big) \\
& = & \Var\left(\sum_{j\in S\setminus\{i\}} -\frac{(\bGamma_S^{-1}(\phi))_{i,j}}{ (\bGamma_S^{-1}(\phi))_{i,i}} Y_j \right) \\
&\geq & \sigma_\phi^4 \Var\Big(\sum_{j\in S\setminus\{i\}}
-(\bGamma_S^{-1}(\phi))_{i,j} Y_j\Big) \\
& = & 
\sigma_\phi^4 (\bGamma_S^{-1}(\phi))_{i,-i}^\top \bGamma_S(\phi) (\bGamma_S^{-1}(\phi))_{i,-i} \\
&\geq &  \frac{\sigma_\phi^6}{1+\|\phi\|_1} \|(\bGamma_S^{-1}(\phi))_{i,-i}\|_2^2\ , 
\eeqn
where the equality in the second line above we use 
$\Var(Y_j)= 1$ and the law of total variance 
(i.e., $\Var(Y)= \EXP[Var(Y|\cB)]+ Var(E[Y|\cB])$)
and in the last line we use that the smallest eigenvalue of $\bGamma(\phi)$ (and also of $\bGamma_S(\phi)$)  is larger than $\sigma_{\phi}^2/(1+\|\phi\|_1)$ (Lemma \ref{lem:spectrum_gamma}). Rearranging this inequality and using the fact that $\|\phi\|_1 < 1$, we arrive at
\beqn 
\|(\bGamma_S^{-1}(\phi))_{i,-i}\|_2^2
&\le& \frac{1 - \sigma_\phi^2}{\sigma_\phi^6} (1 + \|\phi\|_1)\leq 2 \frac{1 - \sigma_\phi^2}{\sigma_\phi^6}
\\ &\leq&  \frac{2\|\phi\|_2^2}{\sigma_\phi^4(1-\|\phi\|_1)}\quad \quad     \text{(by \eqref{eq:lb_sigma})}\\
&\leq&  \frac{2\|\phi\|_2^2}{(1-\|\phi\|_1)^3} \quad \quad \text{(using Lemma \ref{lem:spectrum_gamma}).}
\eeqn
\end{proof}

\begin{lem}\label{lem:det}
For any  $\phi_1, \phi_2\in \Phi_h$, define 
\[B_{\phi_1,\phi_2}:=\left(\frac{\det(\bGamma_{S}^{-1}(\phi_1))\det(\bGamma_{S}^{-1}(\phi_2))}{\det(\bGamma_{S}^{-1}(\phi_1)+
    \bGamma_{S}^{-1}(\phi_2) -\bI_S)}\right)^{1/2}\ .
\]
(Note that $V_S$ defined in Proposition \ref{prp:non_overlap_parametric} equals the expected value of
$B_{\phi_1,\phi_2}$ 
when $\phi_1$ and $\phi_2$ are drawn independently from the
distribution $\pi$.)
Assuming that  $\|\phi_1\|_1\vee \|\phi_2\|_1<1/5$,  we have
\[\log B_{\phi_1,\phi_2} \leq \frac{1}{2}|S|\langle \phi_1,\phi_2\rangle+ 8 Q_S\ , \]
where
\beqn  
Q_S &:=& |S|\sum_{s_1,s_2,s_3=1}^2\big|\sum_{j,k\in \bbN_h}\phi_{s_1,j}\phi_{s_2,k}\phi_{s_3,k-j}\big|+ 15|S| (\|\phi_1\|_2^3\vee \|\phi_2\|_2^3) + |\Delta_{h}(S)| (\|\phi_1\|_2^2\vee \|\phi_2\|_2^2) \\ 
&&+~ 28 |\Delta_{2h}(S)|\left(|\Delta_{2h}(S)|\vee (|\bbN_h|+1)\right)^{1/2} (\|\phi_1\|_2^3\vee \|\phi_2\|_2^3) \ .
\eeqn
\end{lem}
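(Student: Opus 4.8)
The plan is to expand the log-determinants in
$2\log B_{\phi_1,\phi_2}=\log\det\bGamma_S^{-1}(\phi_1)+\log\det\bGamma_S^{-1}(\phi_2)-\log\det\!\big(\bGamma_S^{-1}(\phi_1)+\bGamma_S^{-1}(\phi_2)-\bI_S\big)$
to third order around the identity, keeping the quadratic and cubic contributions explicitly and bounding the rest by $Q_S$. Set $\bR_i:=\bGamma_S^{-1}(\phi_i)-\bI_S$. By Cauchy interlacing, $\bGamma_S(\phi_i)$ has the same eigenvalue range as $\bGamma(\phi_i)$, so Lemma~\ref{lem:spectrum_gamma} puts the eigenvalues of $\bGamma_S^{-1}(\phi_i)$ in $[1-\|\phi_i\|_1,(1+\|\phi_i\|_1)/(1-\|\phi_i\|_1)]$, hence $\|\bR_i\|\le 2\|\phi_i\|_1/(1-\|\phi_i\|_1)<1/2$ under $\|\phi_i\|_1<1/5$; in particular $\|\bR_1+\bR_2\|<1$, all three matrices $\bI_S+(\cdot)$ are positive definite, and $\log(\bI_S+\bR)=\sum_{k\ge1}\frac{(-1)^{k+1}}{k}\bR^k$ converges. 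Taking traces, the degree-$0$ and degree-$1$ terms cancel, the degree-$2$ term is $\tr(\bR_1\bR_2)$, the degree-$3$ term is $-\tr(\bR_1^2\bR_2)-\tr(\bR_1\bR_2^2)$, and the tail forms a remainder $\mathcal{R}$, so that
\[
2\log B_{\phi_1,\phi_2}=\tr(\bR_1\bR_2)-\tr(\bR_1^2\bR_2)-\tr(\bR_1\bR_2^2)+\mathcal{R}\ .
\]

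I would first handle the leading term by splitting $\tr(\bR_1\bR_2)=\sum_{i\in S}\sum_{j\in S}(\bR_1)_{i,j}(\bR_2)_{j,i}$ according to whether $i\in S^h$ or $i\in\Delta_h(S)$. For $i\in S^h$, Lemma~\ref{lem:control_Gamma_S}(1) gives the clean entries $(\bR_\ell)_{i,j}=-\sigma_{\phi_\ell}^{-2}\phi_{\ell,\,i-j}$ off the diagonal and $\sigma_{\phi_\ell}^{-2}-1$ on it; since every neighbour of such $i$ lies in $S$, the inner sum equals $(\sigma_{\phi_1}^{-2}-1)(\sigma_{\phi_2}^{-2}-1)+\sigma_{\phi_1}^{-2}\sigma_{\phi_2}^{-2}\langle\phi_1,\phi_2\rangle$, which by \eqref{eq:lb_sigma} (writing $\sigma_{\phi_\ell}^{-2}=1+O(\|\phi_\ell\|_2^2)$) is $\langle\phi_1,\phi_2\rangle+O(\|\phi_1\|_2^3\vee\|\phi_2\|_2^3)$; replacing $|S^h|$ by $|S|$ costs at most $|\Delta_h(S)|\,|\langle\phi_1,\phi_2\rangle|\le|\Delta_h(S)|(\|\phi_1\|_2^2\vee\|\phi_2\|_2^2)$. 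For $i\in\Delta_h(S)$ I would bound $|\sum_j(\bR_1)_{i,j}(\bR_2)_{j,i}|\le\|(\bR_1)_{i,\cdot}\|_2\,\|(\bR_2)_{i,\cdot}\|_2$ by symmetry of $\bR_2$, and control each row norm by $O(\|\phi_\ell\|_2)$ using Lemma~\ref{lem:control_Gamma_S}(2)--(3) (diagonal entry between $1$ and $\sigma_{\phi_\ell}^{-2}$, off-diagonal $\ell_2$-mass at most $\sigma_{\phi_\ell}^{-4}\cdot2\|\phi_\ell\|_2^2/(1-\|\phi_\ell\|_1)^3$), for a total $O(|\Delta_h(S)|(\|\phi_1\|_2^2\vee\|\phi_2\|_2^2))$. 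Thus $\tr(\bR_1\bR_2)=|S|\langle\phi_1,\phi_2\rangle$ up to errors matching the $|\Delta_h(S)|(\|\phi_1\|_2^2\vee\|\phi_2\|_2^2)$ and $|S|(\|\phi_1\|_2^3\vee\|\phi_2\|_2^3)$ terms of $Q_S$.

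For the cubic term and the remainder I would use the same interior/boundary dichotomy applied to the anchor of each trace word. Writing $\tr(\bR_1^2\bR_2)+\tr(\bR_1\bR_2^2)=\sum_{i}\sum_{j,k}(\cdot)$ and reindexing $j=i+v$, $k=i+w$, the ``bulk template'' obtained by pretending every anchor $i$ lies deep in $S$ equals, up to the factors $\sigma_{\phi_\ell}^{-2}=1+O(\|\phi_\ell\|_2^2)$, exactly $|S|\sum_{s_1,s_2,s_3=1}^2\big|\sum_{j,k\in\bbN_h}\phi_{s_1,j}\phi_{s_2,k}\phi_{s_3,k-j}\big|$ up to a lower-order $|S|\,O(\|\phi_1\|_2^3\vee\|\phi_2\|_2^3)$ slack; the discrepancy between this template and the true value is supported on the at most $|\Delta_{2h}(S)|$ anchors within distance $2h$ of $\partial S$, and for each such anchor a Cauchy--Schwarz bound on the double sum over $j,k$ contributes a factor $(|\Delta_{2h}(S)|\vee(|\bbN_h|+1))^{1/2}$ times row/column $\ell_2$-norms that are $O(\|\phi_\ell\|_2)$, producing the $|\Delta_{2h}(S)|(|\Delta_{2h}(S)|\vee(|\bbN_h|+1))^{1/2}(\|\phi_1\|_2^3\vee\|\phi_2\|_2^3)$ term of $Q_S$. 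For $\mathcal{R}$ I would run the same decomposition on every degree-$k$ word $\tr(\bR_{a_1}\cdots\bR_{a_k})$, $k\ge4$: its bulk part is $|S|$ times a $k$-fold convolution of the $\phi$'s at $0$, which factors as $(\|\phi_1\|_\infty\vee\|\phi_2\|_\infty)^{k-3}\le(\|\phi_1\|_2\vee\|\phi_2\|_2)^{k-3}$ times a cubic convolution sum or $\|\phi_1\|_2^3\vee\|\phi_2\|_2^3$, while its boundary part involves $\lesssim k\,|\Delta_{2h}(S)|$ anchors handled as above; because $\|\bR_i\|<1/2$, the series $\sum_{k\ge4}\frac1k(\cdot)$ converges geometrically and collapses to a bounded multiple of the quantities already present in $Q_S$.

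The main obstacle is this last step: since $\|\bR_i\|$ is only of order $\|\phi_i\|_1$ rather than $\|\phi_i\|_2$, a crude operator-norm bound on the degree-$\ge4$ terms is far too lossy, so the argument must instead exploit the convolution (Fourier) structure of the bulk entries to convert the surplus factors into powers of $\|\phi\|_\infty\le\|\phi\|_2$, and the hypothesis $\|\phi_i\|_1<1/5$ is precisely what is needed both for convergence of the logarithmic series and for summability of the resulting geometric tails. Collecting the three expansions, dividing by $2$, and bounding each product $\|\phi_1\|_2^a\|\phi_2\|_2^b$ with $a+b\ge3$ by $\|\phi_1\|_2^3\vee\|\phi_2\|_2^3$ (valid since $\|\phi_i\|_2<1$), one obtains $\log B_{\phi_1,\phi_2}\le\tfrac12|S|\langle\phi_1,\phi_2\rangle+8Q_S$.
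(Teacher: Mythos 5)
Your expansion to second and third order, and your interior/boundary treatment of $\tr(\bR_1\bR_2)$ and of the cubic trace words via Lemma~\ref{lem:control_Gamma_S}, follow the paper's argument closely and are fine. The genuine gap is the degree-$\ge 4$ remainder $\mathcal{R}$, which you yourself identify as ``the main obstacle'' and then dismiss with an unsubstantiated claim that it ``collapses to a bounded multiple of the quantities already present in $Q_S$.'' It does not. For an interior anchor $i$, the row $(\bR_1)_{i,\cdot}$ is essentially $-\sigma_{\phi_1}^{-2}\phi_1$ (plus a diagonal entry of order $\|\phi_1\|_2^2$), so the bulk part of a degree-$k$ word is $|S|$ times a $k$-fold convolution of the $\phi$'s evaluated at the origin; Young's inequality bounds this by $\|\phi\|_1^{k-2}\|\phi\|_2^2$ and nothing better in general, so summing the tail over $k\ge 4$ leaves a term of order $|S|\,\|\phi_1\|_1^{2}\|\phi_1\|_2^{2}$. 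This exceeds $|S|\,\|\phi_1\|_2^{3}$ whenever $\|\phi_1\|_1^{2}\gg\|\phi_1\|_2$ --- exactly the regime of the prior in Theorem~\ref{thrm3:non_overlap_GMRF}, where $\|\phi\|_1$ is of order $r\sqrt{|\bbN_h|}$ while $\|\phi\|_2=r$ --- and it cannot be hidden in the signed triple-convolution term of $Q_S$ either, since for instance the bulk of $\tr(\bR_1^4)=\|\bR_1^2\|_F^2$ is nonnegative while sign patterns in $\phi_1$ can make $\sum_{j,k}\phi_{1,j}\phi_{1,k}\phi_{1,k-j}$ arbitrarily small. Your proposed fix of extracting factors of $\|\phi\|_\infty\le\|\phi\|_2$ from the convolution structure only yields one factor of $\|\phi\|_1$ per extra degree, not $\|\phi\|_2$, so the series does not collapse into $Q_S$.

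The paper avoids ever generating degree-$\ge4$ terms. Since $\|\phi_i\|_1<1/5$ localizes the spectra of $\bGamma_S^{-1}(\phi_1)$, $\bGamma_S^{-1}(\phi_2)$ and $\bGamma_S^{-1}(\phi_1)+\bGamma_S^{-1}(\phi_2)-\bI_S$ in a fixed interval around $1$, the scalar inequality $|\log(1+x)-x+\tfrac12 x^2|\le\tfrac83|x|^3$ (valid for $|x|$ bounded away from $1$) lets one truncate the Taylor expansion of each log-determinant at \emph{second} order and absorb the entire remainder into a constant multiple of the third-order trace terms $V_2,V_3,V_4$; the mixed cubic words then come from expanding $\tr[(\bR_1+\bR_2)^3]$, and all of them are bounded entrywise exactly as you do for your explicit cubic contribution. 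Replacing your ``order $3$ plus infinite tail'' decomposition by this ``order $2$ with remainder controlled by order-$3$ traces'' decomposition closes the gap, and the rest of your argument then goes through.
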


\begin{proof}
Since for any $\phi$, the spectrum of $\bGamma_S^{-1}(\phi)$ lies between the extrema of the spectrum of $\bGamma^{-1}(\phi)$, by Lemma \ref{lem:spectrum_gamma}, we have  
\beqn
\frac{1-\|\phi\|_1}{\sigma^2_{\phi}}-1 \leq
\lambda^{\min}\left(\bGamma_S^{-1}(\phi)-\bI_S\right)
\leq \lambda^{\rm max}\left(\bGamma_S^{-1}(\phi)-\bI_S\right)
\leq \frac{1+\|\phi\|_1}{\sigma^2_{\phi}}-1\ ,
\eeqn
where $\lambda^{\min}(\bA)$ and $\lambda^{\max}(\bA)$ denote the smallest
and largest eigenvalues of a matrix $\bA$.
Since $\sigma^2_{\phi}\leq \var{Y_i}=1$, the left-hand side is larger than  $-\|\phi\|_1$, while relying on \eqref{eq:lb_sigma}, we derive  
\[\frac{1+\|\phi\|_1}{\sigma^2_{\phi}}-1\leq (\|\phi\|_1+1)\left[1+\frac{\|\phi\|_2^2}{1-\|\phi\|_1}\right]-1 \leq \frac{2\|\phi\|_1}{1-\|\phi\|_1}\ . \]
Consequently, as long as $\|\phi\|_1< 1/5$, the spectrum of $\bGamma_S^{-1}(\phi)$ lies in $(\tfrac45,\tfrac32)$.
This allows us to use the Taylor series of the logarithm, which for a
matrix $\bA$ with spectrum in $(\tfrac12, 2)$, gives
\beqn
\left|\log\big(\det(\bA)\big) - \tr\big[\bA-\bI_S\big]+\frac{1}{2}\tr\left[\big(\bA-\bI_S\big)^2\right]\right| \leq \frac{8}{3}\left|\tr\left[\big(\bA-\bI_S\big)^3\right]\right|\ . 
\eeqn
Applying this expansion to $\bGamma_{S}^{-1}(\phi_1)$, $\bGamma_{S}^{-1}(\phi_2)$ and $\bGamma_{S}^{-1}(\phi_1)+ \bGamma_{S}^{-1}(\phi_2)-\bI_S$, 

\beqn
2\log B_{\phi_1,\phi_2} &\leq&
V_1+ \frac{16}{3} V_2+ 8V_3+8V_4\ , \\
V_1&:=& \tr\Big[\big(\bGamma_{S}^{-1}(\phi_1)-\bI_S\big)\big(\bGamma_{S}^{-1}(\phi_2)-\bI_S\big)\Big]\ , \\
V_2&:= &  \Big|\tr\Big[\big(\bGamma_{S}^{-1}(\phi_1)-\bI_S\big)^3\Big]\Big|+\Big|\tr\Big[\big(\bGamma_{S}^{-1}(\phi_2)-\bI_S\big)^3\Big]\Big| \ , \\
V_3&:= & \Big|\tr\Big[\big(\bGamma_{S}^{-1}(\phi_1)-\bI_S\big)\big(\bGamma_{S}^{-1}(\phi_2)-\bI_S\big)\big(\bGamma_{S}^{-1}(\phi_1)-\bI_S\big)\Big]\Big| \ , \\
V_4 &:= & \Big|\tr\Big[\big(\bGamma_{S}^{-1}(\phi_2)-\bI_S\big)\big(\bGamma_{S}^{-1}(\phi_1)-\bI_S\big)\big(\bGamma_{S}^{-1}(\phi_2)-\bI_S\big)\Big]\Big|
\ .
\eeqn

\paragraph{Control of $V_1$.}
We use the fact that
\[
\tr\Big[\big(\bGamma_{S}^{-1}(\phi_1)-\bI_S\big)\big(\bGamma_{S}^{-1}(\phi_2)-\bI_S\big)\Big]
= 
\sum_{i,j\in S}\big((\bGamma_{S}^{-1}(\phi_1))_{i,j}-\delta_{i,j}\big)\big((\bGamma_{S}^{-1}(\phi_2))_{i,j}-\delta_{i,j}\big)~.
\]
To bound the right-hand side, first consider any node $i \in S^h$ in the $h$-interior of $S$. By the first part of Lemma \ref{lem:control_Gamma_S}, the $i$-th row of $\bGamma_S^{-1}(\phi)$ equals the restriction to $S$ of the $i$-th row of $\bGamma^{-1}(\phi)$.  Using the definition of $\phi_1,\phi_2$, we therefore have
\begin{eqnarray}
\label{v1-bound1} \lefteqn{\sum_{j\in S}\big((\bGamma_{S}^{-1}(\phi_1))_{i,j}-\delta_{i,j}\big)\big((\bGamma_{S}^{-1}(\phi_2))_{i,j}-\delta_{i,j}\big)} &&\\ 
\notag &=& (\bGamma^{-1}(\phi_1))_{i,i} (\bGamma^{-1}(\phi_2))_{i,i} \langle \phi_1,\phi_2\rangle + ((\bGamma^{-1}(\phi_1))_{i,i}-1)((\bGamma^{-1}(\phi_2))_{i,i}-1) \\
\notag &= & \frac{\langle \phi_1,\phi_2\rangle }{\sigma_{\phi_1}^{2}\sigma_{\phi_2}^{2}}+ \frac{(1-\sigma_{\phi_1}^{2})(1-\sigma_{\phi_2}^{2})}{\sigma_{\phi_1}^{2}\sigma_{\phi_2}^{2}} \\
\notag &= & \langle \phi_1,\phi_2\rangle+ \langle \phi_1,\phi_2\rangle \frac{1- \sigma_{\phi_1}^{2}\sigma_{\phi_2}^{2}}{\sigma_{\phi_1}^{2}\sigma_{\phi_2}^{2}}+\frac{(1-\sigma_{\phi_1}^{2})(1-\sigma_{\phi_2}^{2})}{\sigma_{\phi_1}^{2}\sigma_{\phi_2}^{2}} \\
\notag &\leq &\langle \phi_1,\phi_2\rangle+ \frac{3}{2}\frac{\|\phi_1\|_2^4+ \|\phi_2\|_2^4}{(1-\|\phi_1\|_1)(1-|\phi_2\|_1)}\ ,
\end{eqnarray}
using \lemref{spectrum_gamma} in the last line. 
Next, consider a node $i \in \Delta_h(S)$, near the boundary of $S$. Relying on Lemmas \ref{lem:spectrum_gamma} and \ref{lem:control_Gamma_S}, we get
\begin{eqnarray}
\sum_{j\in S}\big((\bGamma_{S}^{-1}(\phi_1))_{i,j}-\delta_{i,j}\big)^2&\leq& \frac{2\|\phi_1\|_2^2}{(1-\|\phi_1\|_1)^3}+ \big(1/\sigma_{\phi_1}^{2}-1\big)^2\nonumber\\
&\leq & \frac{2\|\phi_1\|_2^2}{(1-\|\phi_1\|_1)^3}+ \frac{\|\phi_1\|_2^4}{(1-\|\phi_1\|_1)^2}\leq \frac{3\|\phi_1\|_2^2}{(1-\|\phi_1\|_1)^3} \ ,\label{eq:upper_delta_h} 
\end{eqnarray}
since we assume that $\|\phi\|_1<1$. By the Cauchy-Schwarz inequality,
\beq\label{v1-bound2}
\sum_{j\in S}\big((\bGamma_{S}^{-1}(\phi_1))_{i,j}-\delta_{i,j}\big)\big((\bGamma_{S}^{-1}(\phi_2))_{i,j}-\delta_{i,j}\big)\leq
3\frac{\|\phi_1\|_2^2\vee \|\phi_2\|_2^2}{(1-\|\phi_1\|_1\vee \|\phi_2\|_1)^3}\ .
\eeq
Summing \eqref{v1-bound1} over $i\in S^h$ and \eqref{v1-bound2} over $i \in \Delta_h(S)$, we get
\[
V_1 \le  |S|\langle \phi_1,\phi_2\rangle+ \frac32 |S|\frac{\|\phi_1\|_2^4+ \|\phi_2\|_2^4}{(1-\|\phi_1\|_1)(1-\|\phi_2\|_1)}+ 3|\Delta_h(S)|\frac{\|\phi_1\|_2^2\vee \|\phi_2\|_2^2}{(1-\|\phi_1\|_1\vee \|\phi_2\|_1)^3}\ . 
\]

\paragraph{Control of $V_2$.}
We proceed similarly as in the previous step.
Note that
\begin{eqnarray*}
\tr\Big[\big(\bGamma_{S}^{-1}(\phi_1)-\bI_S\big)^3\Big]
& = & 
\sum_{i,j,k\in S}\big((\bGamma_{S}^{-1}(\phi_1))_{i,j}-\delta_{i,j}\big)\big((\bGamma_{S}^{-1}(\phi_1))_{j,k}-\delta_{j,k}\big)\big((\bGamma_{S}^{-1}(\phi_1))_{k,i}-\delta_{k,i}\big) \\
&\le &
\sum_{i\in S}\left|\sum_{j,k\in S}\big((\bGamma_{S}^{-1}(\phi_1))_{i,j}-\delta_{i,j}\big)\big((\bGamma_{S}^{-1}(\phi_1))_{j,k}-\delta_{j,k}\big)\big((\bGamma_{S}^{-1}(\phi_1))_{k,i}-\delta_{k,i}\big)\right|
~.
\end{eqnarray*}
First, consider a node $i$ in $S\setminus \Delta_{2h}(S)$. Here, we use $\Delta_{2h}(S)$ instead of $\Delta_{h}(S)$ so that we may replace $\bGamma_S^{-1}(\phi)$ below with $\bGamma^{-1}(\phi)$.
We use again Lemma \ref{lem:control_Gamma_S} to replace $(\bGamma_S^{-1}(\phi))_{i,j}$ by $(\bGamma^{-1}(\phi))_{i,j}$ in the sum 
\beqn
\lefteqn{\Big|\sum_{j\in S}\sum_{k\in S}\big((\bGamma_{S}^{-1}(\phi_1))_{i,j}-\delta_{i,j}\big)\big((\bGamma_{S}^{-1}(\phi_1))_{j,k}-\delta_{j,k}\big)\big((\bGamma_{S}^{-1}(\phi_1))_{k,i}-\delta_{k,i}\big)\Big|}\\
&\leq & \Big|\sum_{j,k\in \bbN_h}\frac{-\phi_{1,j}\phi_{1,k}\phi_{1,k-j}}{\sigma_{\phi_1}^6}\Big| +  3 \sum_{j\in \bbN_h}|\phi_{1,j}|^2\frac{1-\sigma^2_{\phi_1}}{\sigma^6_{\phi_1}}+ \frac{\big(1-\sigma^2_{\phi_1}\big)^3}{\sigma^6_{\phi_1}}\\
&\leq & \Big|\sum_{j,k\in \bbN_h}\frac{-\phi_{1,j}\phi_{1,k}\phi_{1,k-j}}{(1-\|\phi_1\|_1)^3}\Big|  + 4\frac{\|\phi_1\|_2^3}{(1-\|\phi_1\|_1)^3 }\ ,
\eeqn
using \lemref{conditional_variance} in the last line. 
Next, consider a node $i\in \Delta_{2h}(S)$. If $i\notin\Delta_h(S)$, then the support of $(\bGamma_{S}^{-1}(\phi_1))_{i,-i}$ is of size $|\bbN_h|$. If $i\in \Delta_{h}(S)$, then $\Delta_{2h}(S)\setminus\{i\}$ separates $\{i\}$ from $S\setminus \Delta_{2h}(S)$ in the dependency graph and the Global Markov property \citep{MR1419991} entails that \[Y_i\indep (Y_{k},\ k\in S\setminus \Delta_{2h}(S))|(Y_{k},\  k\in \Delta_{2h}(S)\setminus\{i\})\ ,\]
and  therefore the support of $(\bGamma_{S}^{-1}(\phi_1))_{i,-i}$ is of  size smaller than $|\Delta_{2h}(S)|$.
Using the Cauchy-Schwarz inequality and \eqref{eq:upper_delta_h},  we get
\beqn
\lefteqn{\sum_{j\in S}\sum_{k\in S}\big((\bGamma_{S}^{-1}(\phi_1))_{i,j}-\delta_{i,j}\big)\big((\bGamma_{S}^{-1}(\phi_1))_{j,k}-\delta_{j,k}\big)\big((\bGamma_{S}^{-1}(\phi_1))_{k,i}-\delta_{k,i}\big)}&&\\
&\leq& \sum_{j\in S}\big|(\bGamma_{S}^{-1}(\phi_1))_{i,j}-\delta_{i,j}\big| \big\|(\bGamma_{S}^{-1}(\phi_1))_{i,.}-\delta_{i,.}\big\|_2 \big\|(\bGamma_{S}^{-1}(\phi_1))_{j,.}-\delta_{j,.}\big\|_2\\
&\leq & \sqrt{|\Delta_{2h}(S)|\vee (|\bbN_h|+1)}\big\|(\bGamma_{S}^{-1}(\phi_1))_{i,.}-\delta_{i,.}\big\|_2\big\|(\bGamma_{S}^{-1}(\phi_1))_{i,.}-\delta_{i,.}\big\|_2 \sup_{j\in S}\big\|(\bGamma_{S}^{-1}(\phi_1))_{j,.}-\delta_{j,.}\big\|_2\\
&\leq & \sqrt{|\Delta_{2h}(S)|\vee (|\bbN_h|+1)}3^{3/2}\frac{\|\phi_1\|_2^3 }{(1-\|\phi_1\|_1)^{9/2}}\ .
\eeqn
In conclusion, 
\beqn
V_2&\leq& |S|\frac{\big|\sum_{j,k\in
    \bbN_h}\phi_{1,j}\phi_{1,k}\phi_{1,k-j}\big|+\big|\sum_{j,k\in
    \bbN_h}\phi_{2,j}\phi_{2,k}\phi_{2,k-j}\big|}{(1-\|\phi_1\|_1\vee
  \|\phi_2\|_1)^3} + 8|S| \frac{\|\phi_1\|_2^3\vee
  \|\phi_2\|_2^3}{(1-\|\phi_1\|_1\vee \|\phi_2\|_1)^3 }\\
& & + 11 |\Delta_{2h}(S)|(|\Delta_{2h}(S)|\vee (|\bbN_h|+1))^{1/2}\frac{ \|\phi_1\|_2^3\vee \|\phi_2\|_2^3 }{(1-\|\phi_1\|_1\vee \|\phi_2\|_1)^{9/2}}\ .
\eeqn
\paragraph{Control of $V_3+V_4$.}
Arguing as above, we obtain
\beqn
V_3+V_4&\leq& |S|\frac{\big|\sum_{j,k\in \bbN_h}\phi_{1,j}\phi_{1,k}\phi_{2,k-j}\big|+\big|\sum_{j,k\in \bbN_h}\phi_{1,j}\phi_{1,k}\phi_{2,k-j}\big|}{(1-\|\phi_1\|_1\vee \|\phi_2\|_1)^3} + 8|S| \frac{\|\phi_1\|_2^3\vee \|\phi_2\|_2^3}{(1-\|\phi_1\|_1\vee \|\phi_2\|_1)^3 }\\&+& 11 |\Delta_{2h}(S)|(|\Delta_{2h}(S)|\vee (|\bbN_h|+1))^{1/2}\frac{ \|\phi_1\|_2^3\vee \|\phi_2\|_2^3 }{(1-\|\phi_1\|_1\vee \|\phi_2\|_1)^{9/2}}\ .
\eeqn
\end{proof}

\subsection{Proof of Corollary \ref{cor:non_overlap_GMRF}}

As stated in Lemma \ref{lem:spectrum_gamma}, all eigenvalues of the
covariance operator
$\bGamma^{-1}(\phi)$ lie in
$(1-\|\phi\|_1,\tfrac{1+\|\phi\|_1}{1-\|\phi\|_1} )$. 
Since the spectrum of $\bGamma^{-1}_{S}(\phi)$ lies between the
extrema of the spectrum of $\bGamma^{-1}(\phi)$, 
and using the assumption that $\|\phi\|_1<1/2$, this entails
\beq \label{gamma-id-op}
\|\bGamma_S(\phi)-\bI_S\|\leq \max\left[\frac{2\|\phi\|_1}{1+\|\phi\|_1}, \frac{\|\phi\|_1}{1-\|\phi\|_1}\right]<1\ ,
\eeq
We now apply Proposition \ref{prp:non_overlap_parametric} with the
probability measure $\pi$ concentrating on $\phi$. In this case,
\[V_S = 
\frac{\det(\bGamma_{S}^{-1}(\phi))}{ \det(2 \bGamma_{S}^{-1}(\phi) -
  \bI_S)^{1/2}} = \det(\bI_S - (\bI_S - \bGamma_S(\phi))^2)^{-1/2},\]
and we get  
\beqn
\bar{R}^*_{\nu, \phi} &\ge& 1 - \frac{1}{2|\cC| } \left[\sum_{S\in \cC} \det(\bI_S - (\bI_S - \bGamma_S(\phi))^2)^{-1/2}\right]^{1/2}\\
&\ge & 1 - \frac{1}{2|\cC| } \left[\sum_{S\in \cC} \exp\left(\frac{\|\bGamma_S(\phi)-\bI_S\|_F^2}{2(1- \|\bGamma_S(\phi)-\bI_S\|)}\right) \right]^{1/2}\\
&\ge& 1 - \frac{1}{2|\cC| } \left[\sum_{S\in \cC} \exp\left(\frac{\|\bGamma_S(\phi)-\bI_S\|_F^2}{2(1- 2\|\phi\|_1)}\right)  \right]^{1/2}\ ,
\eeqn
where $\|\cdot\|_F$ denotes the Frobenius norm.
The second inequality above is obtained by applying the inequality
$1/(1-\lambda)\leq e^{\lambda/(1-\lambda)}$ for $0\leq \lambda<1$ to
the eigenvalues of $(\bGamma_S(\phi)-\bI_S)^2$, while the third
inequality follows from \eqref{gamma-id-op} and the fact that $\|\phi\|_1 < 1/2$. 
It remains to bound $\|\bGamma_S(\phi)-\bI_S\|_F^2$:
\beqn
\|\bGamma_S(\phi) -\bI_S\|_F^2&=& \sum_{(i, j \in  S), \ i\neq j}\Cor^2(Y_i,Y_j)\\ &\leq&  |S| \sum_{v\neq 0}\gamma_v^2\\
&\leq & 20|S| \|\phi\|_2^2\ ,
\eeqn
where we used Lemma \ref{lem:conditional_variance},
$\sigma_{\phi}^2\le 1$, and $\|\phi\|_2\leq \|\phi\|_1\leq 1/2$ in the last line.

\subsection{Proof of Theorem \ref{thrm3:non_overlap_GMRF}}

Recall the definition of the prior $\pi$ defined just before the statement of the theorem.
Taking the numerical constant
$C$ in \eqref{eq:condition_r2_lower} sufficiently small and relying on condition \eqref{eq:condition_Nh}, we have $\|\phi\|_1= r \sqrt{\bbN_h}<1/5$. Consequently, the support of $\pi$ is a subset of the parameter space $\Phi_h$ and we are in position to invoke Lemma \ref{lem:det}. 

Let $\phi_1,\phi_2$ be drawn independently according to the
distribution $\pi$ and denote by $\xi_1$ and $\xi_2$ the corresponding
random vectors defined on $\bbN'_h$. By Lemma \ref{lem:det}, 
\[ \log B_{\phi_1,\phi_2}
\leq |S|r^2\bbN_h^{-1} \langle \xi_1,\xi_2 \rangle + 8 Q_S\ ,
\]
where
\[
Q_S \leq 23|S| r^{3}\sqrt{|\bbN_h|} + |\Delta_{h}(S)|r^2+ 28
|\Delta_{2h}(S)|(|\Delta_{2h}(S)|\vee (|\bbN_h|+1))^{1/2} r^3\ .
\]
Since  $\langle \xi_1, \xi_2\rangle $ is distributed as the sum of  $|\bbN_h|/2$ independent Rademacher random variables, we deduce that 
\beqn
V_S&\leq& \cosh\left(\frac{r^2|S|}{|\bbN_h|}\right)^{|\bbN_h|/2}
\exp\left( 383(|S|\sqrt{|\bbN_h|}\vee |\Delta_{2h}(S)|^{3/2}) r^{3} +
  8|\Delta_{h}(S)|r^2\right) \\ 
&\leq & \exp\left(\frac{r^4|S|^2}{4|\bbN_h|} \bigwedge \frac{r^2|S|}{2}+  383(|S|\sqrt{|\bbN_h|}\vee|\Delta_{2h}(S)|^{3/2}) + 8|\Delta_{h}(S)|r^2\right)\ , 
\eeqn
since $\cosh(x)\leq \exp(x)\wedge \exp(x^2/2)$ for any $x>0$. Combining this bound with Proposition \ref{prp:non_overlap_parametric}, we conclude that the Bayes risk $\bar{R}^*_{\nu,\pi}$ is bounded from below by 
\beq\label{eq:lower_gmrf_complex}
1 - \frac{1}{2\sqrt{|\cC|}}\max_{S\in \cC}\exp\left(\frac{|S|^2r^4}{4|\bbN_h|} \bigwedge \frac{|S|r^2}{2}+  383\left(|S|\sqrt{|\bbN_h|+1}\vee|\Delta_{2h}(S)|^{3/2}\right) r^{3} + 8|\Delta_{h}(S)|r^2\right)\ .
\eeq
If the numerical constant $C$ in Condition
\eqref{eq:condition_r2_lower} is sufficiently small, then $\tfrac{|S|^2r^4}{4|\bbN_h|} \bigwedge \tfrac{|S|r^2}{2}\leq 0.5\log(|\cC|/a)$. Also, choosing $C_0$ small enough in condition \eqref{eq:condition_r2_lower}, relying on condition \eqref{eq:condition_Nh} and on $|\bbN_h|\geq 1$, we also have 
\[383\left(|S|\sqrt{|\bbN_h|+1}\vee|\Delta_{2h}(S)|^{3/2}\right) r^{3} + 8|\Delta_{h}(S)|r^2\leq 0.5 \log(|\cC|/a) \ .\]
Thus, we conclude that $\bar{R}^*_{\nu,\pi}\geq 1-a$.

\subsection{Proof of Corollary \ref{cor:lower_hypercube}}

We deduce the result by closely following the proof of Theorem \ref{thrm3:non_overlap_GMRF}. We first prove that $5r\sqrt{|\bbN_h|}\leq 1$ is satisfied for $n$ large enough. 
Starting from \eqref{eq:powerless_gmrf}, we have, for $n$ large enough,
\beqn
5r\sqrt{|\bbN_h|}&\leq& 5C_2^{1/2} \left(\frac{|\bbN_h|\log(\tfrac{n}{k})}{k}\bigvee\frac{|\bbN_{h}|^{3/2}\sqrt{\log(\tfrac nk)}}{k}\right)^{1/2}\\
&\leq& 5C_2^{1/2} \left(C_1 \bigvee\frac{|\bbN_{h}|^{3/2}\sqrt{\log(\tfrac nk)}}{k}\right)^{1/2}\ ,
\eeqn
where we used Condition \eqref{eq:condition_neigbhorhood} in the second line. Taking $C_1$ and $C_2$ small enough, we only have to bound $|\bbN_{h}|^{3/2}\sqrt{\log(\tfrac nk)}/k$. 
We distinguish two cases.
\bitem
\item {Case 1}: $|\bbN_h|\leq \log(n/k)$. Since $|\bbN_h|\leq  C_1k/\log\left(\frac{n}{k}\right)$, it follows that $|\bbN_{h}|^{3/2}\sqrt{\log(\tfrac nk)}/k\leq C_1$. 

\item {Case 2}: $|\bbN_h|\geq \log(n/k)$. Then the second part of Condition \eqref{eq:condition_neigbhorhood}  enforces $\log^{4/5}(n/k) \leq C_1 k^{2/5}$. Using again the second part of Condition \eqref{eq:condition_neigbhorhood} yields 
\beqn
\frac{|\bbN_{h}|^{3/2}\sqrt{\log(\tfrac nk)}}{k}\leq C_1^{3/2} \frac{\log^{4/5}(n/k)}{k^{2/5}}\leq C_1^{3/2}\ .
\eeqn
\eitem
\bigskip

As  $5r\sqrt{|\bbN_h|}\leq 1$, we can use the same prior $\pi$ as in the proof of Theorem \ref{thrm3:non_overlap_GMRF} and arrive at the same lower bound \eqref{eq:lower_gmrf_complex} on $R^*_\pi$. It remains to prove that this lower bound  goes to one, namely that 
\[\frac{2|S|^2r^4}{|\bbN_h|} \bigwedge (|S|r^2)+  765\left(|S|\sqrt{|\bbN_h|+1}\vee|\Delta_{2h}(S)|^{3/2}\right) r^{3} + 16|\Delta_{2h}(S)|r^2- \frac{1}{2}\log(n/k)\to -\infty\ , \]
where $S$ is a hypercube of size $k$.
Taking the constant $C_2$ small enough in \eqref{eq:powerless_gmrf} leads to $\frac{2k^2r^4}{|\bbN_h|} \bigwedge (kr^2)\leq \log(n/k)/4$ for $n$ large enough. 
\beqn
k r^{3}\sqrt{|\bbN_h|}\leq C_2^{3/2}\left[\frac{\log(n/k)^3|\bbN_h|}{k}\bigvee \frac{\log(n/k)^{3/2}|\bbN_h|^{5/2}}{k} \right]^{1/2}\leq  C_2^{3/2}\left(C_1^{1/2}\vee C_1^{5/4}\right)\log(n/k)\ ,
\eeqn
where we used again the second part of  Condition \eqref{eq:condition_neigbhorhood}. Taking $C_1$ and $C_2$ small enough ensures that $765k r^{3}\sqrt{|\bbN_h|+1}\leq \log(n/k)/8$ for $n$ large enough. Finally, it suffices to control $|\Delta_{2h}(S)|^{3/2}r^3$ since $|\Delta_{2h}(S)|r^2\leq |\Delta_{2h}(S)|^{3/2}r^3\vee 1$.  Observe that
\[|\Delta_{2h}(S)|= \ell^{d}- (\ell-4h)^d= \ell^d \left[1- (1-4h/\ell)^d\right]\leq 4\ell^ddh/\ell\leq  4d|\bbN_h|^{1/d}k^{\frac{d-1}{d}} .\]
It then follows from Condition \eqref{eq:powerless_gmrf} that
\beqn
(d|\bbN_h|^{1/d}k^{\frac{d-1}{d}})^{3/2}r^3 &\leq & C_2^{3/2}\left[\frac{d^{3/2}|\bbN_h|^{3/(2d)}}{k^{3/(2d)}}\log^{1/2}\left(\frac{n}{k}\right)\bigvee \frac{d^{3/2}|\bbN_h|^{3/(2d)+3/4}}{k^{3/(2d)}\log^{1/4}\left(\frac{n}{k}\right)} \right] \log\left(\frac{n}{k}\right)\\
&\leq & C_2^{3/2}\left[C_1^{3/(2d)}d^{3/2}\log^{-1/4}\left(\frac{n}{k}\right)\bigvee C_1^{\frac{6+3d}{4d}} \right] \log\left(\frac{n}{k}\right)
\eeqn 
where we used again \eqref{eq:condition_neigbhorhood} in the second line. Choosing $C_1$ and $C_2$ small enough concludes the proof.

\subsection{Proof of \prpref{glrt}}

We leave $\phi$ implicit throughout.  
Define 
\[
U'_S = X_S^\top (\bI_S - \bGamma_S^{-1}) X_S - \tr(\bI_S - \bGamma_S^{-1}).
\]
Under the null, $X$ is standard normal, so applying the union bound and \lemref{normal-quad} gives
\[
\pr{U > 4} \le \sum_{\cS \in \cC}\P\Big\{ U'_S > 4\|\bI_S - \bGamma_S^{-1}\|_F\sqrt{\log(|\cC|)} +  4\|\bI_S - \bGamma_S^{-1}\|\log(|\cC|)\Big\} \le |\cC|^{-1}~.
\]

Under the alternative where $S \in \cC$ is anomalous, $X_S$ has covariance $\bGamma_S$, so that we have $X_S^\top (\bI_S - \bGamma_S^{-1}) X_S \sim Z^\top (\bGamma_S - \bI_S) Z$, where $Z$ is standard normal in dimension $|S|$. Since $\Var(Y_i)=1$, the diagonal elements of $\bGamma_S - \bI_S$ are all equal to zero. We apply \lemref{normal-quad} to get that
\beqn
\P\left[X_S^\top (\bI_S - \bGamma_S^{-1}) X_S\leq  - 2\|\bGamma_S - \bI_S\|_F\sqrt{\log(|\cC|)}-   2\|\bGamma_S - \bI_S\|\log(|\cC|) \right]\leq |\cC|^{-1}\ , 
\eeqn 
In view of the definition of $U$, we have  $\P[U>4]\geq 1-  |\cC|^{-1}$ as soon as\beq\label{eq:lower_RS}
\tr[\bGamma_S^{-1}-\bI_S] \ge 4 \big[\|\bGamma_S - \bI_S\|_F \vee \|\bGamma_S^{-1}-\bI_S\|_F\big] \sqrt{\log(|\cC|)} +  6 \big[\|\bGamma-\bI\|\vee \|\bGamma^{-1}-\bI\| \big]\log(|\cC|)\ .
\eeq
Therefore, it suffices to bound $\|\bGamma_S - \bI_S\|_F$, $\|\bGamma_S^{-1}-\bI_S\|_F$, $\|\bGamma-\bI\|$, $\|\bGamma^{-1}-\bI\|$ and $\tr[\bGamma_S^{-1}-\bI_S]$. 
In the sequel, the $C$ denotes a large enough positive constant depending only on $\eta$, whose value may vary from line to line.
From Lemma \ref{lem:conditional_variance}, we deduce that
\[
\|\bGamma_S - \bI_S\|^2_F\leq C |S| \|\phi\|_2^2 \ . 
\]
Lemma \ref{lem:spectrum_gamma} implies that 
\[\|\bGamma-\bI\|\vee \|\bGamma^{-1}-\bI\| \leq C \ . \]
We apply Lemma~\ref{lem:control_Gamma_S} to obtain
\beqn
\|\bGamma_S^{-1}-\bI_S\|_F^2 &\leq& C |S|\|\phi\|_2^2+ |S|(\sigma_{\phi}^{-2} - 1)^2\\
&\leq & C |S|\|\phi\|_2^2\ ,
\eeqn
where we used Lemma~\ref{lem:spectrum_gamma} in the second line.
Finally, we use again Lemmas~\ref{lem:control_Gamma_S} and~\ref{lem:spectrum_gamma} to obtain
\beqn
 \tr[\bGamma_S^{-1}-\bI_S]&=& |S^h| \frac{\sigma^2_{\phi}-1}{\sigma^2_{\phi}}+ \sum_{j\in \Delta_h(S)}(\bGamma^{-1}_{S})_{j,j} -1 \\
&\geq & |S^h| \frac{\sigma^2_{\phi}-1}{\sigma^2_{\phi}} \geq C |S|\|\phi\|^2_2\ .
\eeqn
Consequently, \eqref{eq:lower_RS} holds as soon as $|S|\|\phi\|_2^2\geq C \log(|\cC|)$.

\subsection{Proof of \thmref{LS1}}

We use $C, C', C''$ as generic positive constants, whose actual values may change with each appearance.

\medskip

\noindent
{\bf Under the null hypothesis}.
 First, we bound the $1-\alpha$ quantile of $T^*$ under the null hypothesis. 
Denote $Z_S:= \|\boldsymbol{\Pi}_{S,h}X_{S,h}\|_2^2$ so that $T_S= Z_S|S^h|[\|X_{S,h}\|_2^2-Z_S]^{-1}$. Since $Z_S$ is the squared norm of the projection of $X_{S,h}$ onto the column space of $\bF_{S,h}$, we can express $Z_S$ as a least-squares criterion: 
\[Z_S= \max_{\phi\in \mathbb{R}^{\bbN_h}} \|X_{S,h}\|_2^2- \sum_{i\in S^h}\Big(X_i - \sum_{j \in \bbN_h }\phi_jX_{i+j} \Big)^2 \ .\]
Given $\phi\in \mathbb{R}^{\bbN_h}$, define the matrix $\bB_{\phi,S}\in \mathbb{R}^{S\times S}$ such that  for any $i\in S^h$, and any $j$, $(\bB_{\phi,S})_{i,i+j}=\phi_j$, and all the remaining entries of $\bB_{\phi,S}$ are zero. It then follows that 
\beq\label{eq:definition_ZS}
Z_S=  \max_{\phi\in \mathbb{R}^{\bbN_h}} \tr\left[\bR_{\phi,S}X_{S}X_{S}^\top \right]\ , \quad \quad \bR_{\phi,S}:= (\bI_S-\bB^\top _{\phi,S})(\bI_S-\bB_{\phi,S})-\bI_S\ .
\eeq
Observe that  $Z_S$ can be seen as the supremum of a Gaussian chaos of order 2.
As the collection of matrices in the supremum of \eqref{eq:definition_ZS} is not bounded, we cannot directly apply Lemma \ref{lem:chaos}. Nevertheless, upon defining defining 
$\tilde{Z}_S:= \max_{\|\phi\|_1\leq 1} \tr\left[\bR_{\phi,S}X_{S}X_{S}^\top \right]$,
we have for any $t>0$, 
\beq\label{eq:deviation_ZS}
\P[Z_S\geq t]\leq \P[\tilde{Z}_S\geq t]+ \P[\tilde{Z}_S\neq Z_S]\ ,
\eeq and we can control the deviations of $\tilde{Z}_S$ using Lemma \ref{lem:chaos}. 
Observe that for any $\phi$ with $\|\phi\|_1\leq 1$, $\|\bI_S-\bB_{\phi,S}\| \le 2$, so that $\|\bR_{\phi,S}\|\leq 3$. 
Choose $\widehat{\phi}_S$ among the $\phi$'s achieving the maximum in \eqref{eq:definition_ZS}, and note that
$\P[\tilde{Z}_S\neq Z_S] = \P[\|\widehat{\phi}_S\|_1> 1]$.  We bound the right-hand side below.  
In view of \lemref{chaos}, we also need to bound $\E[\tilde{Z}_S]$ and $\E\big[\sup_{\|\phi\|_1\leq 1}\tr(\bR_{\phi,S} X_{S}X_{S}^\top \bR_{\phi,S})\big]$ in order to control $\P[\tilde{Z}_S\geq t]$.

\medskip\noindent
{\it Control of $\P[\|\widehat{\phi}_S\|_1> 1]$}. When $\bF_{S,h}^\top \bF_{S,h}$ is invertible, $\widehat{\phi}_S= (\bF_{S,h}^\top \bF_{S,h})^{-1}\bF_{S,h}X_{S,h}$. By the Cauchy-Schwarz inequality, 
\begin{eqnarray}
\P\big[\|\widehat{\phi}_S\|_1> 1\big]
&\leq & \P\big[\|\widehat{\phi}_S\|_2 > |\bbN_h|^{-1/2}\big]\nonumber\\
&\leq & \P\left[\lambda^{\min}(\bF_{S,h}^\top \bF_{S,h})\leq \tfrac12 |S^h|\right]+ \P\left[\|\bF_{S,h}X_{S,h}\|_2\geq \frac{|S^h|}{2|\bbN_h|^{1/2}} \right]\nonumber\\
&\leq &\P\left[\lambda^{\min}(\bF_{S,h}^\top \bF_{S,h})\leq \tfrac12 |S^h|\right]+ \P\left[\|\bF_{S,h}X_{S,h}\|_\infty\geq \frac{|S^h|}{2|\bbN_h|} \right]
\label{eq:upper_phi_1}\ .
\end{eqnarray}
First, we control the smallest eigenvalue of $\bF_{S,h}^\top \bF_{S,h}$. Under the null hypothesis, the vectors $F_{i}$ follow the standard normal distribution, but $\bF_{S,h}^\top \bF_{S,h}$ is not a Wishart matrix since the vectors $F_i$ are correlated. 
However, $\bF_{S,h}^\top \bF_{S,h}$ decomposes as a  sum of  $|\bbN_{h}|+1$ (possibly dependent) standard Wishart matrices. 
Indeed, define  
\beq \label{S_i}
S_i= S^h\cap \{i + (2h+1)u,\ u\in \mathbb{Z}^d\}, \quad i\in \bbN_h \cup \{0\}\ ,
\eeq
and then $\bA_i=\sum_{j\in S_i} F_jF_j^\top $. 
The vectors $(F_{j},\ j\in S_i)$ are independent since the minimum $\ell_{\infty}$ distance between any two nodes in $S_i$ is at least $2h+1$, so that $\bA_i$ is standard Wishart. 
Denoting $n_i=|S_i|$, we are in position to apply \lemref{concentration_vp_wishart}, to get
\[\P\left[\lambda^{\min}(\bA_i)\le n_i - 2\sqrt{|\bbN_h| n_i}- 2\sqrt{2x n_i}\right]\leq e^{-x}\ , \quad \forall x > 0\ .\]
Since the $\{S_i : i \in \bbN_h \cup \{0\}\}$ forms a partition of $S^h$, we have $\bF_{S,h}^\top \bF_{S,h}= \sum_{i \in \bbN_h \cup \{0\}} \bA_i$, and in particular, $\lambda_{\rm min}(\bF_{S,h}^\top \bF_{S,h}) \ge \sum_{i} \lambda_{\rm min}(\bA_i)$.
Using this, the tail bound for $\lambda^{\min}(\bA_i)$ with $x \gets x +\log(|\bbN_h|+1)$, some simplifying algebra, and the union bound, we conclude that, for all $x > 0$,
\beq\label{eq:control_eigen_FF}
\P\left[\lambda^{\min}(\bF_{S,h}^\top \bF_{S,h})\le |S^h|- 5(|\bbN_h|+1)\sqrt{|S^h|}- 3\sqrt{(|\bbN_h|+1)|S^h|x}\right]\leq e^{-x}\ ,
\eeq
since
\[
\sum_{i \in \bbN_h \cup \{0\}} \Big(n_i - 2\sqrt{|\bbN_h| n_i}- 2\sqrt{2x n_i}\Big) 
\ge \sum_{i \in \bbN_h \cup \{0\}} n_i - 2(\sqrt{|\bbN_h|} + \sqrt{2x}) \sum_{i \in \bbN_h \cup \{0\}} \sqrt{n_i}\ ,
\]
with $\sum_{i \in \bbN_h \cup \{0\}} n_i = |S^h|$, and $\sum_{i \in \bbN_h \cup \{0\}} \sqrt{n_i}\leq \sqrt{|S^h|(|\bbN_h|+1)}$, by the Cauchy-Schwarz inequality. 
Taking $x=C |S^h|/(|\bbN_h|+1)$ in the above inequality for a sufficiently small constant $C$ and relying on Condition \eqref{eq:condition_LS}, we get
\[
\P\left\{\lambda^{\min}(\bF_{S,h}^\top \bF_{S,h}^\top )\leq \tfrac12 |S^h|\right\}
\le \exp\left(-C |S^h|/(|\bbN_h|+1)\right) \ .
\]
We now turn to bounding $\|\bF_{S,h}X_{S,h}\|_{\infty}$.
Each component of $\bF_{S,h}X_{S,h}$ is of the form $Q_v:=\sum_{i\in S^h}X_{i}X_{i+v}$ for some $v\in \bbN_h$. 
Note that $Q_v$ is a quadratic form of $|S|$ standard normal variables, and the corresponding symmetric matrix has zero trace, Frobenius norm equal to  $\sqrt{|S^h|/2}$, and operator norm smaller than $1$ by diagonal dominance. 
Combining Lemma~\ref{lem:normal-quad} with a union bound, we get
\beqn
\P\left\{\|\bF_{S,h}X_{S,h}\|_{\infty}\geq \sqrt{2|S^h|(x+|\bbN_h|)}+ 2 (x+|\bbN_h|) \right\} \leq 2e^{-x}\ , \forall x>0\ .
\eeqn
Taking $x=C|S^h|/|\bbN_h|^2$ in the above inequality for a sufficiently small constant $C$ and using once again Condition \eqref{eq:condition_LS} allows us to get the bound 
\[\P\left\{ \|\bF_{S,h}X_{S,h}\|_\infty\geq \frac{|S^h|}{2|\bbN_h|} \right\} \le \exp[-C |S^h|/|\bbN_h|^2]\ .\]
Plugging these bounds into \eqref{eq:upper_phi_1}, we conclude that  
\beq\label{eq:bound_phi_l1}
\P\left\{\|\widehat{\phi}_S\|_1> 1\right\} 
\leq 3\exp\left(-C \frac{|S^h|}{|\bbN_h|^2}\right)\ .
\eeq

\medskip\noindent 
{\it Control of $\E[\tilde{Z}_S]$}.
Since 
\[\tilde{Z}_S\leq Z_S = \|\boldsymbol{\Pi}_{S,h}X_{S,h}\|_2^2\leq \|(\bF_{S,h}^\top \bF_{S,h})^{-1}\|\|\bF_{S,h}X_{S,h}\|_2^2 \le \|X_{S,h}\|_2^2\ ,\] 
we have, for any $a > 0$,
\beqn
\E[\tilde{Z}_S]
&\leq &  a \E\left[\|\bF_{S,h}X_{S,h}\|_2^2\right] + \E\left[\|X_{S,h}\|_2^2 \IND{\|(\bF_{S,h}^\top \bF_{S,h})^{-1}\|\geq a}\right] \\
&\leq &  a \E\left[\|\bF_{S,h}X_{S,h}\|_2^2\right] + 
\sqrt{\P\left\{\|(\bF_{S,h}^\top \bF_{S,h})^{-1}\|\geq a\right\} \E\left[ \|X_{S,h}\|_2^4\right]}\ ,
\eeqn
where we used the Cauchy-Schwarz inequality in the second line.
Since, under the null, $X_S\sim \cN(0,\bI_S)$, it follows that  $\E\left[\|\bF_{S,h}X_{S,h}\|_2^2\right]=|\bbN_h(S)||S^h|$ and $\E\left[\|X_{S,h}\|_2^4\right]= |S^h|(|S^h|+2)$.  Gathering this, the deviation inequality \eqref{eq:control_eigen_FF} with $x=C|S^h|/|\bbN_h|^2$ with a small constant $C>0$, and Condition \eqref{eq:condition_LS}, and choosing as threshold $a =(|S^h|(1-|\bbN_h|^{-1/2})^{-1}$, leads to
\begin{eqnarray}
 \E[\tilde{Z}_S]&\leq & \frac{|\bbN_h|}{1-|\bbN_h|^{-1/2}}+ \sqrt{3}|S^h|\sqrt{\P\left\{\lambda^{\min}(\bF_{S,h}^\top \bF_{S,h})\le 1/a\right\}}\nonumber\\
&\leq & |\bbN_h|+ C' |\bbN_h|^{1/2}+ \sqrt{3}|S^h| \exp\left(-C\frac{|S^h|}{|\bbN_h|^2}\right)\nonumber\\
&\leq & |\bbN_h|+ C |\bbN_h|^{1/2}\label{eq:upper_EZ}\ .
\end{eqnarray}

\medskip\noindent 
{\it Control of $\E\big[\sup_{\|\phi\|_1\leq 1} \tr(\bR_{\phi,S} X_{S}X_{S}^\top \bR_{\phi,S})\big]$}. As explained above, $\|\bR_{\phi,S}\|\leq 3$ and we are therefore  able to bound this expectation in terms of $\E[\tilde{Z}_S]$ as follows:
\beq\label{eq:upper_ER2}
\E\left[\sup_{\|\phi\|_1\leq 1} \tr(\bR_{\phi,S} X_{S}X_{S}^\top \bR_{\phi,S})\right]\leq 3\E\big[ \tilde{Z}_S\big]\leq C |\bbN_h|\ ,
\eeq
where we used \eqref{eq:upper_EZ} in the last inequality. 

\medskip
Combining the decomposition \eqref{eq:deviation_ZS} with Lemma \ref{lem:chaos} and \eqref{eq:bound_phi_l1}, \eqref{eq:upper_EZ} and \eqref{eq:upper_ER2}, we obtain
\[\P\left\{Z_S\geq |\bbN_h|+ C\left(|\bbN_h|^{1/2}+\sqrt{|\bbN_h|t}+ t\right)\right\}\leq e^{-t}+ 3\exp\left(-C' \frac{|S^h|}{|\bbN_h|^2}\right)\ , \quad \forall t>0\ . \]
Since 
\[T_S= \frac{|S^h|Z_S}{\|X_{S,h}\|_2^2-Z_S}, \text{ where $\|X_{S,h}\|_2^2$ follows a $\chi^2$ distribution with $|S^h|$ degrees of freedom}\ ,\] 
from Lemma \ref{lem:normal-quad}, we derive 
\[\P\Big[\|X_{S,h}\|_2^2\geq |S^h|-2 \sqrt{|S^h|t}-2t\Big]\leq e^{-t}\ ,\] 
for any $t>0$, and from these two deviation inequalities,  we get,
for all $t\leq C'' |S^h|$,
\beqn
\P\left[T_S \geq  \frac{|\bbN_h|+ C\big(|\bbN_h|^{1/2}+\sqrt{|\bbN_h|t}+ t\big)}{1- C \left[\sqrt{ \frac{t}{|S^h|}}\vee\frac{|\bbN_h|}{|S^h|} \right]} \right]\leq 2e^{-t}+  3\exp\left[-C' \frac{|S^h|}{|\bbN_h|^2}\right]  \ . 
\eeqn
Finally, we take a union bound over all $S\in \cC$ and invoke again Condition \eqref{eq:condition_LS} to conclude that, for any $t\leq C'' |S^h|$,
\[
\P\left\{\max_{S\in \cC} T_S \geq  |\bbN_h|+ C\left(\sqrt{|\bbN_h|(\log(|\cC|)+ 1+t )}+ \log(|\cC|)+ t\right) \right\}\leq 2e^{-t}+  3|\cC|\exp\left(-C' \frac{|S^h|}{|\bbN_h|^2}\right) \ .
\]
To conclude, we let $t=\log(1/(4\alpha))$ in the above inequality, and use the condition on $\alpha$ in the statement of the theorem together with Condition \eqref{eq:condition_LS}, to get the following control of $T^*$ under the null hypothesis:
\[
\P\left\{\max_{S\in \cC} T_S \geq   |\bbN_h|+ C\left(\sqrt{|\bbN_h|(\log(|\cC|)+ 1+\log(\alpha^{-1}) )}+ \log(|\cC|)+ \log(\alpha^{-1})\right) \right\} \leq \alpha \ .
\]

\medskip

\noindent
{\bf Under the alternative hypothesis}.
Next we study the behavior of the test statistic $T^*$ under the assumption
that there exists some $S\in \cC$ such that $X_S=Y_S\sim \cN(0, \bGamma_S(\phi))$. Since $T^*\geq T_S$, it suffices to focus on this particular $T_S$. For any $i \in S^h$, recall that $Y_i= \phi^\top F_i+ \epsilon_i$ where $F_i = (Y_{i+v} : 1 \le |v|_{\infty} \le h)$ and $\epsilon_i$ is independent of $F_i$. Hence, $Z_S$ decomposes as 
\beqn
Z_S &= &\|\boldsymbol{\Pi}_{S,h}Y_{S,h}\|_2^2 \\&= &\|\bF_{S,h}\phi + \boldsymbol{\Pi}_{S,h}\epsilon_{S,h} \|_2^2\\
&= &  \|\bF_{S,h}\phi\|_2^2+ 2 \phi^\top  \bF_{S,h}^\top \epsilon_{S,h}+ \|\boldsymbol{\Pi}_{S,h}\epsilon_{S,h}\|_2^2 = ({\rm I}) + ({\rm II}) +({\rm III})\ .
\eeqn
To bound the numerator of $T_S$, we bound each of these three terms.
(I) and (II) are simply quadratic functions of multivariate normal random vectors and we control their deviations using Lemma \ref{lem:normal-quad}. In contrast, (III) is more intricate and we use an ad-hoc method. 
In order to structure the proof, we state four lemmas needed in our calculations. We provide proofs of the lemmas further down.

\begin{lem}\label{lem:control_(I)}
Under condition \eqref{eq:condition_LS}, there exists a numerical constant $C>0$ such that 
\beq \label{eq:deviation_I}
\P\left\{({\rm I})\geq \frac{|S^h|\|\phi\|_2^2}{2(1+\|\phi\|_1)}\sigma_{\phi}^2  \right\} \ge  1 -\exp\left(-C\frac{|S^h|}{|\bbN_h|}\right)\ .
\eeq
\end{lem}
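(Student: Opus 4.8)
\textbf{Proof proposal for Lemma~\ref{lem:control_(I)}.}
The plan is to view $({\rm I}) = \|\bF_{S,h}\phi\|_2^2$ as a positive semidefinite quadratic form in the Gaussian vector $Y_S\sim\cN(0,\bGamma_S(\phi))$ and then apply the lower‑tail inequality of Lemma~\ref{lem:normal-quad}. Recall the matrix $\bB_{\phi,S}$ introduced in \eqref{eq:definition_ZS}: the $i$-th coordinate of $\bB_{\phi,S}Y_S$ equals $\phi^\top F_i$ for $i\in S^h$ and vanishes otherwise, so $({\rm I}) = \|\bB_{\phi,S}Y_S\|_2^2 = Y_S^\top\bB_{\phi,S}^\top\bB_{\phi,S}Y_S$. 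Writing $Y_S = \bGamma_S^{1/2}(\phi)Z$ with $Z\sim\cN(0,\bI_S)$ gives $({\rm I}) = Z^\top\bR Z$ with $\bR := \bGamma_S^{1/2}(\phi)\bB_{\phi,S}^\top\bB_{\phi,S}\bGamma_S^{1/2}(\phi)$, which is symmetric positive semidefinite.

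First I would identify the mean. By Lemma~\ref{lem:covariance_residuals}, $Y_i = \phi^\top F_i + \epsilon_i$ with $\epsilon_i$ independent of $F_i$ and $\Var(\epsilon_i) = \sigma_\phi^2$, so by stationarity $\E[(\phi^\top F_i)^2] = \Var(Y_i) - \sigma_\phi^2 = 1-\sigma_\phi^2$ for every $i\in S^h$, whence $\tr(\bR) = \E[({\rm I})] = |S^h|(1-\sigma_\phi^2)$. Inequality~\eqref{eq:lb_sigma} then gives $1-\sigma_\phi^2\ge\|\phi\|_2^2\sigma_\phi^2/(1+\|\phi\|_1)$, i.e. $\tr(\bR)$ is at least twice the target value in \eqref{eq:deviation_I}, so it suffices to show $({\rm I})\ge\tfrac12\tr(\bR)$ with the claimed probability.

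Next I would invoke the positive‑semidefinite part of Lemma~\ref{lem:normal-quad}, $\P\{({\rm I})\le\tr(\bR)-2\|\bR\|_F\sqrt{t}\}\le e^{-t}$, and use $\|\bR\|_F^2\le\|\bR\|\,\tr(\bR)$ so that any $t$ with $16\|\bR\|\,t\le\tr(\bR)$ forces $({\rm I})\ge\tfrac12\tr(\bR)$ on the corresponding event. To bound $\|\bR\|$ I would write $\|\bR\|\le\|\bGamma_S(\phi)\|\,\|\bB_{\phi,S}\|^2$, with $\|\bGamma_S(\phi)\|\le\|\bGamma(\phi)\|\le\sigma_\phi^2/(1-\|\phi\|_1)\le 1/(1-\|\phi\|_1)$ by Lemma~\ref{lem:spectrum_gamma} (a principal submatrix inheriting the spectral bound), and $\|\bB_{\phi,S}\|\le\|\phi\|_1$ since the maximal absolute row sum and maximal absolute column sum of $\bB_{\phi,S}$ are both at most $\|\phi\|_1$ (Schur test $\|\bA\|\le(\|\bA\|_{1\to1}\|\bA\|_{\infty\to\infty})^{1/2}$). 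Hence $\|\bR\|\le\|\phi\|_1^2/(1-\|\phi\|_1)$, and combining this with $\tr(\bR)\ge|S^h|\|\phi\|_2^2(1-\|\phi\|_1)/(1+\|\phi\|_1)$ (using $\sigma_\phi^2\ge1-\|\phi\|_1$) and with $\|\phi\|_1^2\le|\bbN_h|\,\|\phi\|_2^2$ (Cauchy--Schwarz) yields $\tr(\bR)/(16\|\bR\|)\ge c\,|S^h|/|\bbN_h|$ for a numerical $c>0$, in the regime where $\|\phi\|_1$ is bounded away from $1$. Choosing $t = c'\,|S^h|/|\bbN_h|$ with $c'$ small enough then gives \eqref{eq:deviation_I}; Condition~\eqref{eq:condition_LS} is what makes $|S^h|/|\bbN_h|$ large, so the resulting probability bound is non‑trivial.

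The step I expect to be the crux is precisely the estimate of $\|\bR\|$, equivalently of the operator norm of the covariance of the filtered vector $(\phi^\top F_i)_{i\in S^h}$: a bound using only $\|\phi\|_2$ would be far too weak, and one really needs the gain that $\tr(\bR)^2/\|\bR\|_F^2$ — hence the admissible deviation level $t$ — scales like $|S^h|/|\bbN_h|$ rather than $|S^h|$. That gain comes from combining the $\ell_1$ control of the convolution matrix $\bB_{\phi,S}$ with the spectral estimate for $\bGamma_S(\phi)$, and it is exactly what produces the $\exp(-C|S^h|/|\bbN_h|)$ in the statement.
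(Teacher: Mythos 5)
Your reduction of $({\rm I})$ to the positive semidefinite quadratic form $Z^\top \bR Z$ with $\bR=\bGamma_S^{1/2}(\phi)\bB_{\phi,S}^\top\bB_{\phi,S}\bGamma_S^{1/2}(\phi)$, the identification $\tr(\bR)=\E[({\rm I})]=|S^h|(1-\sigma_\phi^2)$, and the combination of the lower-tail bound of Lemma~\ref{lem:normal-quad} with $\|\bR\|_F^2\le\|\bR\|\,\tr(\bR)$ are all correct, and this is a genuinely different route from the paper's. But the step you flag as the crux is where the argument breaks: your bound on $\|\bR\|$ passes through $\|\bGamma_S(\phi)\|\le \sigma_\phi^2/(1-\|\phi\|_1)$, which requires $\|\phi\|_1<1$ and degenerates as $\|\phi\|_1\to 1$. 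The lemma is stated for an arbitrary $\phi\in\Phi_h$ with a \emph{numerical} constant $C$, and $\Phi_h$ is strictly larger than the open $\ell_1$-ball: it contains parameters with $\|\phi\|_1$ arbitrarily close to, and even exceeding, $1$, for which $\|\bGamma(\phi)\|$ is unbounded (the spectral density blows up near the boundary of $\Phi_h$). This is not a cosmetic issue, because the lemma is invoked in Case~2 of the proof of Theorem~\ref{thm:LS1}, where $\|\phi\|_2^2\ge (4|\bbN_h|)^{-1}$ and no $\ell_1$ control on $\phi$ is available. As written, your argument proves the statement only under the extra hypothesis $\|\phi\|_1\le\eta<1$, with $C$ depending on $\eta$ rather than numerical.

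The paper avoids the largest eigenvalue of $\bGamma(\phi)$ altogether. It partitions $S^h$ into the $|\bbN_h|+1$ sublattices $S_i$ of \eqref{S_i}, whose points are at mutual $\ell_\infty$-distance at least $2h+1$: for $j\in S_i$ the index sets $\{j+v: v\in\bbN_h\}$ are pairwise disjoint, so any linear form $\sum_{j\in S_i}u_j\,\phi^\top F_j$ equals $w^\top Y$ with $\|w\|_2^2=\|u\|_2^2\|\phi\|_2^2$, and hence the covariance of $(\phi^\top F_j)_{j\in S_i}$ has smallest eigenvalue at least $\|\phi\|_2^2\,\lambda^{\min}(\bGamma(\phi))\ge \|\phi\|_2^2\sigma_\phi^2/(1+\|\phi\|_1)$ --- a bound that uses only $\|\sigma_\phi^2\bGamma^{-1}(\phi)-\bI\|\le\|\phi\|_1$ and is valid for every $\phi\in\Phi_h$. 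Each $\|\bF_{S_i}\phi\|_2^2$ therefore stochastically dominates that multiple of a $\chi^2_{|S_i|}$, and a lower-tail $\chi^2$ bound with a union bound over the $|\bbN_h|+1$ blocks, taking $t$ of order $|S^h|/|\bbN_h|$ and using Condition \eqref{eq:condition_LS} to absorb the $\log(|\bbN_h|+1)$ terms, yields \eqref{eq:deviation_I} with a numerical constant. To rescue your global argument you would need an upper bound on the operator norm of the covariance of $(\phi^\top F_i)_{i\in S^h}$ that does not go through $\|\bGamma(\phi)\|$; the block decomposition is precisely the device that supplies the missing control.
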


\begin{lem}\label{lem:control_(II)}
For any $t>0$, 
\begin{eqnarray}\label{eq:deviation_II-a}
\P\left\{ ({\rm II})\geq - 2\sigma_{\phi}\|\phi\|_2\sqrt{2 |S^h| \|\bGamma(\phi)\|(2+\|\phi\|_1)t}- 12\big[\|\bGamma(\phi)\|\vee (1+\|\phi\|_1)\sigma_{\phi}^2\big]t \right\} &\geq& 1 -e^{-t}\ ,\\ \label{eq:deviation_II-b}
\P\left\{({\rm II}) \geq   -2\sqrt{2} \sigma_{\phi}\sqrt{(|\bbN_h|+1)[\log(|\bbN_h|+1)+ t]} \|\bF_{S,h} \phi\|_2\right\}&\geq& 1-e^{-t}\ .
\end{eqnarray}
 \end{lem}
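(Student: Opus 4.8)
The plan is to write $({\rm II}) = 2\phi^\top\bF_{S,h}^\top\epsilon_{S,h} = 2\,a^\top b$ with $a := \bF_{S,h}\phi$ and $b := \epsilon_{S,h}$, and to note that, under the alternative (where $X_S = Y_S\sim\cN(0,\bGamma_S(\phi))$ and $\epsilon_{S,h} = X_{S,h}-\bF_{S,h}\phi$), the pair $(a,b)$ is a centered jointly Gaussian vector in $\bbR^{2|S^h|}$: both $a$ and $b$ are linear images of $Y_S$, since for $i\in S^h$ the neighborhood $i+\bbN_h$ is contained in $S$. Moreover $\E[a_i b_i] = \phi^\top\Cov(F_i,\epsilon_i) = 0$ because $\epsilon_i$ is independent of $F_i$ (\lemref{covariance_residuals}), so $\E[({\rm II})] = 0$. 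The two displays \eqref{eq:deviation_II-a} and \eqref{eq:deviation_II-b} are two one-sided deviation bounds for this centered quadratic form, and I would prove them by two different devices.

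For \eqref{eq:deviation_II-a} I would view $({\rm II})$ as a Gaussian quadratic form. Let $\Sigma := \Cov\big((a,b)\big)$, with blocks $\Sigma_{aa} = \Cov(a)$, $\Sigma_{bb} = \Cov(b)$, $\Sigma_{ab} = \Cov(a,b)$, and let $J$ be the $2|S^h|\times 2|S^h|$ matrix with vanishing diagonal blocks and identity off-diagonal blocks, so that $(a,b) \stackrel{d}{=}\Sigma^{1/2}\zeta$ with $\zeta\sim\cN(0,\bI)$ and $({\rm II}) \stackrel{d}{=}\zeta^\top\bR\zeta$ for $\bR := \Sigma^{1/2}J\Sigma^{1/2}$; this $\bR$ is symmetric with $\tr(\bR) = \tr(J\Sigma) = 2\tr(\Sigma_{ab}) = 0$. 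Applying \lemref{normal-quad} to $-\bR$ then gives $({\rm II}) \ge -2\|\bR\|_F\sqrt t - 2\|\bR\|\,t$ with probability at least $1-e^{-t}$, so it remains to estimate the two norms. For the operator norm I would use $\|\bR\|\le\|\Sigma\|\le 2\max\big(\|\Sigma_{aa}\|,\|\Sigma_{bb}\|\big)$ (the standard bound for a positive semidefinite block matrix), with $\|\Sigma_{bb}\| = \|\Cov(\epsilon_{S,h})\|\le(1+\|\phi\|_1)\sigma_\phi^2$ by diagonal dominance of the covariance computed in \lemref{covariance_residuals}, and $\|\Sigma_{aa}\|\le\|\phi\|_1^2\|\bGamma(\phi)\|\le\|\bGamma(\phi)\|$ since $a$ is the restriction to $S^h$ of the convolution of $Y$ with $\phi$ and we can use \lemref{spectrum_gamma}. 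For the Frobenius norm I would compute $\|\bR\|_F^2 = \tr\big((J\Sigma)^2\big) = 2\tr(\Sigma_{ab}^2) + 2\tr(\Sigma_{aa}\Sigma_{bb})$, bound $|\tr(\Sigma_{ab}^2)|\le\|\Sigma_{ab}\|_F^2\le\sigma_\phi^4|S^h|\|\phi\|_2^2$ using the explicit entries $\Cov(a_i,b_j) = \sigma_\phi^2\phi_{j-i}\IND{0<|i-j|_\infty\le h}$ (again \lemref{covariance_residuals}), and bound $\tr(\Sigma_{aa}\Sigma_{bb})\le\|\Sigma_{bb}\|\,\tr(\Sigma_{aa})$ with $\tr(\Sigma_{aa}) = |S^h|(1-\sigma_\phi^2) = |S^h|\,\phi^\top\Cov(F_i)\phi\le|S^h|\,\|\bGamma(\phi)\|\,\|\phi\|_2^2$. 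Combining these, and using $\sigma_\phi^2\le 1\le\|\bGamma(\phi)\|$, yields $\|\bR\|_F^2\le 2\sigma_\phi^2\|\bGamma(\phi)\|\,|S^h|\,\|\phi\|_2^2\,(2+\|\phi\|_1)$, which produces exactly the two terms in \eqref{eq:deviation_II-a}.

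For \eqref{eq:deviation_II-b} I would use the partition $\{S_i : i\in\bbN_h\cup\{0\}\}$ of $S^h$ from \eqref{S_i}. Within a fixed block $S_i$, any two distinct nodes are at $\ell_\infty$-distance at least $2h+1$, so by \lemref{covariance_residuals} the variables $(\epsilon_j)_{j\in S_i}$ are mutually independent $\cN(0,\sigma_\phi^2)$ and, as a block, independent of $(F_j)_{j\in S_i}$ (uncorrelated sub-blocks of a joint Gaussian are independent). Hence, conditionally on $(\phi^\top F_j)_{j\in S_i}$, the sum $\sum_{j\in S_i}(\phi^\top F_j)\epsilon_j$ is a centered scalar Gaussian of variance $\sigma_\phi^2\sum_{j\in S_i}(\phi^\top F_j)^2 = \sigma_\phi^2\|a_{S_i}\|_2^2$, where $a_{S_i}$ is the sub-vector of $a = \bF_{S,h}\phi$ indexed by $S_i$. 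A one-sided Gaussian tail bound at level $s = \log(|\bbN_h|+1)+t$, a union bound over the $|\bbN_h|+1$ blocks, and the Cauchy--Schwarz inequality $\sum_i\|a_{S_i}\|_2\le\sqrt{|\bbN_h|+1}\,\big(\sum_i\|a_{S_i}\|_2^2\big)^{1/2} = \sqrt{|\bbN_h|+1}\,\|\bF_{S,h}\phi\|_2$ then give $({\rm II}) = 2\sum_i\sum_{j\in S_i}(\phi^\top F_j)\epsilon_j\ge -2\sqrt2\,\sigma_\phi\sqrt{(|\bbN_h|+1)[\log(|\bbN_h|+1)+t]}\,\|\bF_{S,h}\phi\|_2$ with probability at least $1-e^{-t}$, i.e.\ \eqref{eq:deviation_II-b}.

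The main obstacle is the bookkeeping behind \eqref{eq:deviation_II-a}: one must avoid the seemingly natural expansion $2\,a^\top b = \|a+b\|_2^2 - \|a\|_2^2 - \|b\|_2^2$, because $\|a+b\|_2^2 = \|Y_{S,h}\|_2^2$ is a quadratic form whose matrix has Frobenius norm of order $\sqrt{|S^h|}$ regardless of $\phi$, which would wipe out the essential $\sigma_\phi\|\phi\|_2$ scaling; keeping $a$ and $b$ coupled and computing $\|\bR\|_F^2$ through the exact cross-covariance block $\Sigma_{ab}$ is what delivers the right dependence on $\phi$ (and the trace-centering $\tr\bR = 0$ needed to invoke \lemref{normal-quad}). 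Deriving \eqref{eq:deviation_II-b} is comparatively routine; the only point is to recognize that the block decomposition \eqref{S_i}, already used above in the analysis under the null, is precisely what makes the residuals and the design rows independent inside each block.
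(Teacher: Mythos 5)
Your proof is correct and follows essentially the same route as the paper: for \eqref{eq:deviation_II-a} both arguments represent $({\rm II})$ as a trace-zero Gaussian quadratic form $\zeta^\top\Sigma^{1/2}J\Sigma^{1/2}\zeta$ and invoke \lemref{normal-quad}, the only cosmetic difference being that you bound $\|\bR\|_F^2$ via the block identity $\tr\big((J\Sigma)^2\big)=2\tr(\Sigma_{ab}^2)+2\tr(\Sigma_{aa}\Sigma_{bb})$ where the paper computes the same variance entrywise with Isserlis' theorem, and your proof of \eqref{eq:deviation_II-b} (partition into the blocks $S_i$, conditional Gaussian tail, union bound, Cauchy--Schwarz) is identical to the paper's. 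The one step to watch is $\|\Sigma_{aa}\|\le\|\phi\|_1^2\|\bGamma(\phi)\|\le\|\bGamma(\phi)\|$, which silently assumes $\|\phi\|_1\le1$ (the paper sidesteps this by bounding the operator norm of the joint covariance through the identity $\phi^\top F_i=Y_i-\epsilon_i$), but this is harmless here since \eqref{eq:deviation_II-a} is only applied in the regime $\|\phi\|_1\le 1/2$ and your constant has slack up to $\|\phi\|_1\le\sqrt{3}$.
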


Recall that $\gamma_j= (\bGamma(\phi))_{0,j}$ denotes the covariance between $Y_0$ and $Y_j$.
\begin{lem}\label{lem:control_(III)} 
Denote by $\bGamma_{\bbN_h}(\phi)$ the covariance matrix of $(Y_{i},\ i\in \bbN_h)$.
For any $t\leq |S^h|$, 
\beq\label{eq:deviation_wisharts}
\lambda^{\rm max}\left(\bGamma_{\bbN_h}(\phi)^{-1/2}\frac{\bF_{S,h}^\top  \bF_{S,h}}{|S^h|}\bGamma_{\bbN_h}(\phi)^{-1/2}\right)\leq 1+
4\|\bGamma(\phi)\|\|\bGamma^{-1}(\phi)\|\frac{|\bbN_h|}{|S^h|^{1/2}}\left(\sqrt{t}+\log(|\bbN_h|)\right)
\eeq
with probability larger than $1-2e^{-t}$. Also, for any $t\ge 1$, 
\beq\label{eq:control_bSigma}
\frac{\|\bGamma_{\bbN_h}(\phi)^{-1/2}\bF_{S,h}^\top  \epsilon_{S,h}\|_2^2}{|S^h|\sigma_{\phi}^2}\geq |\bbN_h|- C\left(|\bbN_h|\|\phi\|_1+ \|\bGamma^{-1}(\phi)\|\|\phi\|_2^2+ \frac{|\bbN_h|^{5/2} + |\bbN_h|^2 (\sum_{j\neq 0}\gamma_{j}^2)}{\sqrt{|S^h|}} \right)t^2
\eeq
with probability larger than $1-2e^{-t}$.
\end{lem}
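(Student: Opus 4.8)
\noindent\textbf{Proof plan for Lemma~\ref{lem:control_(III)}.}
Both bounds start by whitening: set $\widetilde F_i:=\bGamma_{\bbN_h}^{-1/2}F_i$, so that each $\widetilde F_i$ is marginally $\cN(0,\bI_{\bbN_h})$ and $\bGamma_{\bbN_h}^{-1/2}\bF_{S,h}^\top\bF_{S,h}\bGamma_{\bbN_h}^{-1/2}=\sum_{i\in S^h}\widetilde F_i\widetilde F_i^\top$. Unlike in the null analysis of Theorem~\ref{thm:LS1}, under the alternative the $\widetilde F_i$'s (and the residuals $\epsilon_i$'s) are correlated across all of $S^h$; the whole argument rests on the fact that, after a suitable spatial thinning, every cross-covariance that appears is an entry of $\gamma$ at a \emph{nonzero} lag, and by Lemma~\ref{lem:conditional_variance} these satisfy $\sum_{j\ne0}\gamma_j^2\le 2\|\phi\|_2^2/(1-\|\phi\|_1)^2$.

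\emph{Part 1.} As in the null case, decompose $S^h=\bigcup_{i\in\bbN_h\cup\{0\}}S_i$ along the sublattices $S_i$ of \eqref{S_i}, so that $\bGamma_{\bbN_h}^{-1/2}\bF_{S,h}^\top\bF_{S,h}\bGamma_{\bbN_h}^{-1/2}=\sum_i\bGamma_{\bbN_h}^{-1/2}\bA_i\bGamma_{\bbN_h}^{-1/2}$ with $\bA_i=\sum_{j\in S_i}F_jF_j^\top$; by sub-additivity of $\lambda^{\rm max}$ it suffices to bound each term. The crucial geometric observation is that any two nodes of a fixed $S_i$ lie at $\ell_\infty$-distance at least $2h+1>2h$, so the neighborhoods $j+\bbN_h$, $j\in S_i$, are pairwise disjoint. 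Consequently, for any fixed unit vector $w\in\bbR^{\bbN_h}$, the Gaussian vector $\big(\langle\widetilde F_j,w\rangle\big)_{j\in S_i}$ equals $\boldsymbol M Y_S$ for some matrix $\boldsymbol M$ with $\boldsymbol M\boldsymbol M^\top$ diagonal and diagonal entries $\|\bGamma_{\bbN_h}^{-1/2}w\|_2^2\le\|\bGamma^{-1}(\phi)\|$; hence its covariance $\boldsymbol\Theta_w=\boldsymbol M\bGamma_S\boldsymbol M^\top$ satisfies $\tr(\boldsymbol\Theta_w)=|S_i|$, $\|\boldsymbol\Theta_w\|\le\|\bGamma(\phi)\|\,\|\bGamma^{-1}(\phi)\|$, and $\|\boldsymbol\Theta_w\|_F^2\le|S_i|\,\|\bGamma(\phi)\|\,\|\bGamma^{-1}(\phi)\|$. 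Since $\lambda^{\rm max}(\bGamma_{\bbN_h}^{-1/2}\bA_i\bGamma_{\bbN_h}^{-1/2})=\sup_{\|w\|=1}\sum_{j\in S_i}\langle\widetilde F_j,w\rangle^2$, I would apply Lemma~\ref{lem:normal-quad} to the quadratic form $\sum_{j}\langle\widetilde F_j,w\rangle^2$ for fixed $w$, pass to the supremum over a $\tfrac14$-net of the unit sphere of $\bbR^{\bbN_h}$ (cardinality $\le 5^{|\bbN_h|}$), and union-bound over the $|\bbN_h|+1$ sublattices, i.e.\ replace the deviation level $x$ by $x+C|\bbN_h|+\log(|\bbN_h|+1)$. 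Summing the per-sublattice bounds, using $\sum_i\sqrt{|S_i|}\le\sqrt{(|\bbN_h|+1)|S^h|}$ (Cauchy--Schwarz), dividing by $|S^h|$, and using Condition~\eqref{eq:condition_LS} ($|\bbN_h|^4\le C_1|S^h|$) together with $t\le|S^h|$ to absorb the $|\bbN_h|^2/|S^h|$- and $|\bbN_h|x/|S^h|$-type remainders into the main term $\|\bGamma(\phi)\|\,\|\bGamma^{-1}(\phi)\|\,|\bbN_h||S^h|^{-1/2}(\sqrt t+\log|\bbN_h|)$, gives \eqref{eq:deviation_wisharts}.

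\emph{Part 2.} Write $Z:=\|\bGamma_{\bbN_h}^{-1/2}\bF_{S,h}^\top\epsilon_{S,h}\|_2^2=\sum_{i,i'\in S^h}\epsilon_i\epsilon_{i'}\langle\widetilde F_i,\widetilde F_{i'}\rangle$. Each $\epsilon_i$ and each $\widetilde F_i$ is linear in $Y_S$, so $Z$ is a Gaussian chaos of order at most $4$, and Lemma~\ref{lem:chaos_4} yields $Z\ge\E[Z]-C\,\Var^{1/2}(Z)\,t^2$ with probability at least $1-e^{-t}$; it remains to lower-bound $\E[Z]$ and upper-bound $\Var(Z)$. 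For the mean, expand $\E[\epsilon_i\epsilon_{i'}\langle\widetilde F_i,\widetilde F_{i'}\rangle]$ via Isserlis's formula, using the three properties of Lemma~\ref{lem:covariance_residuals} ($\E[\epsilon_iF_i]=0$; $\E[\epsilon_i\epsilon_{i'}]=-\phi_{i-i'}\sigma_\phi^2$ for $1\le|i-i'|_\infty\le h$ and $0$ otherwise; $\E[\epsilon_iY_k]=\sigma_\phi^2\mathbf{1}\{k=i\}$) and $\E[\widetilde F_i\widetilde F_{i'}^\top]=\bGamma_{\bbN_h}^{-1/2}\Cov(F_i,F_{i'})\bGamma_{\bbN_h}^{-1/2}$, whose entries are the $\gamma_{(i-i')+(v-v')}$. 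The diagonal terms $i=i'$ contribute exactly $|S^h|\sigma_\phi^2|\bbN_h|$ (since $\epsilon_i\perp\widetilde F_i$ and $\tr(\bGamma_{\bbN_h}^{-1}\bGamma_{\bbN_h})=|\bbN_h|$), while the off-diagonal terms are supported on pairs with $|i-i'|_\infty\le h$ and, after invoking Lemma~\ref{lem:spectrum_gamma}, are bounded in absolute value by $C|S^h|\sigma_\phi^2\big(|\bbN_h|\|\phi\|_1+\|\bGamma^{-1}(\phi)\|\,\|\phi\|_2^2\big)$. For the variance, Wick-expanding $\E[Z^2]-(\E Z)^2$ and grouping the index-octuples by their spatial overlap, the purely ``local'' contributions are at most $C|S^h|\sigma_\phi^4|\bbN_h|^5$, whereas every contribution that involves a long-range correlation is controlled by $\sum_{j\ne0}\gamma_j^2$ and is at most $C|S^h|\sigma_\phi^4|\bbN_h|^4\big(\sum_{j\ne0}\gamma_j^2\big)^2$ (plus intermediate cross terms), so that $\Var^{1/2}(Z)\le C\sqrt{|S^h|}\,\sigma_\phi^2\big(|\bbN_h|^{5/2}+|\bbN_h|^2\sum_{j\ne0}\gamma_j^2\big)$. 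Substituting these two estimates into the deviation bound, dividing by $|S^h|\sigma_\phi^2$, and using $t\ge1$ (so $t^2\ge1$) to also fold the deterministic mean-correction into the $t^2$ term yields \eqref{eq:control_bSigma}; the spare factor $2$ in $1-2e^{-t}$ leaves room for combining this with the event of Part~1 or for slack in the constant of Lemma~\ref{lem:chaos_4}.

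\emph{Main obstacle.} The difficulty is that the spatial independence that powered the null analysis is gone: $F_i$ and $\epsilon_i$ are now correlated throughout $S^h$, and one must verify these dependencies are negligible. In Part~1 this is handled by the observation that inside a $(2h+1)$-spaced sublattice the neighborhoods are pairwise disjoint, which is exactly what pins $\|\boldsymbol\Theta_w\|$ down to the condition number $\|\bGamma(\phi)\|\,\|\bGamma^{-1}(\phi)\|$ and avoids a spurious factor $\sqrt{|S_i|}$; in Part~2 it amounts to a long but elementary bookkeeping of the Wick expansion of a fourth-order chaos with mutually dependent factors, every off-diagonal term being ultimately dominated by $\sum_{j\ne0}\gamma_j^2\le 2\|\phi\|_2^2$ (Lemma~\ref{lem:conditional_variance}). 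Condition~\eqref{eq:condition_LS} then guarantees that all the remainder terms, which are polynomial in $|\bbN_h|$, are of strictly smaller order than the main terms.
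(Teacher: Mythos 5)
Your Part~2 is essentially the paper's own argument: the paper splits $\|\bGamma_{\bbN_h}(\phi)^{-1/2}\bF_{S,h}^\top\epsilon_{S,h}\|_2^2$ into the diagonal sum $A=\sum_i\epsilon_i^2\|\bGamma_{\bbN_h}(\phi)^{-1/2}F_i\|_2^2$ and the off-diagonal sum $B$, applies Lemma~\ref{lem:chaos_4} to $A+B$, and computes means and variances by Isserlis exactly as you sketch. One caution: your stated variance bound $\Var^{1/2}(Z)\le C\sqrt{|S^h|}\,\sigma_\phi^2\bigl(|\bbN_h|^{5/2}+|\bbN_h|^2\sum_{j\ne0}\gamma_j^2\bigr)$ is not correct as written — the Wick pairings in which $(i_1,i_2)$ and $(i_3,i_4)$ are two distant $h$-close pairs contribute terms of order $|S^h|^2\|\phi\|_1^2|\bbN_h|^2\sigma_\phi^4$ and $|S^h|^2\|\phi\|_2^4\|\bGamma^{-1}(\phi)\|^2\sigma_\phi^6$ to $\E[B^2]$, i.e.\ a $\Var^{1/2}$ contribution scaling like $|S^h|$ rather than $\sqrt{|S^h|}$. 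These are the ``intermediate cross terms'' you wave at; after dividing by $|S^h|\sigma_\phi^2$ they land on the $|\bbN_h|\|\phi\|_1$ and $\|\bGamma^{-1}(\phi)\|\|\phi\|_2^2$ terms already present in \eqref{eq:control_bSigma}, so the conclusion survives, but they must be tracked explicitly.

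Your Part~1 is a genuinely different route. The paper never decomposes into sublattices here: it bounds each scalar entry $(\bF_{S,h}^\top\bF_{S,h})_{v_1,v_2}=\sum_{i\in S^h}Y_{i+v_1}Y_{i+v_2}$ as a single Gaussian quadratic form in $Y_S$ via Lemma~\ref{lem:normal-quad} (Frobenius norm $\sqrt{|S^h|}$, operator norm $\le\|\bGamma(\phi)\|$ after conjugation), union-bounds over the $|\bbN_h|^2$ entries, controls $\bigl\|\,|S^h|^{-1}\bF_{S,h}^\top\bF_{S,h}-\bGamma_{\bbN_h}(\phi)\bigr\|$ by the maximal absolute row sum, and only then whitens, using $\|\bGamma_{\bbN_h}(\phi)^{-1}\|\le\|\bGamma^{-1}(\phi)\|$. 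This is why the factor $|\bbN_h|/\sqrt{|S^h|}$ appears and why no net is needed. Your sublattice-plus-net scheme is workable in spirit (the disjoint-neighborhood observation correctly pins $\|\boldsymbol\Theta_w\|$ to the condition number), but there is a concrete flaw: applying the $\tfrac14$-net bound $\lambda^{\rm max}(\bM)\le(1-2\epsilon)^{-1}\max_{w\in\mathcal N}w^\top\bM w$ directly to the \emph{uncentered} form $\sum_{j\in S_i}\langle\widetilde F_j,w\rangle^2$ multiplies the leading term $|S_i|$ by $2$, so after summing and normalizing you obtain $2+o(1)$ rather than $1+o(1)$ — which destroys the statement, since everything downstream needs the leading eigenvalue to be $1$ up to a vanishing correction. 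You must first subtract $\E[\bM_i]=|S_i|\bI$ (note $\E[\widetilde F_j\widetilde F_j^\top]=\bI$) and run the net on the centered matrix, using both tails of Lemma~\ref{lem:normal-quad}. Even then the explicit constant $4$ of \eqref{eq:deviation_wisharts} will not come out of a net argument, though only the order matters for Theorem~\ref{thm:LS1}.
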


To bound the denominator of $T_S$, we start from the inequality
\[
\|Y_{S,h}\|_2^2 - \|\boldsymbol{\Pi}_{S,h}Y_{S,h}\|_2^2= \|\epsilon_{S,h}\|_2^2- \|\boldsymbol{\Pi}_{S,h}\epsilon_{S,h}\|_2^2 \leq \|\epsilon_{S,h}\|_2^2
\]
and then use the following result.

\begin{lem}\label{lem:control_(IV)}
Under condition \eqref{eq:condition_LS}, we have
\beq\label{eq:upper_epsilon}
 \P\left\{\|\epsilon_{S,h}\|^2 \leq \sigma_{\phi}^2|S^h|(1+ |\bbN_h|^{-1/2})\right\}\geq 1-\exp\left(-C \frac{|S^h|}{|\bbN_h|^2}\right) \ . 
\eeq
\end{lem}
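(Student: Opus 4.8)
The plan is to recognize $\|\epsilon_{S,h}\|^2$ as a Gaussian quadratic form and to apply the upper‑tail estimate of Lemma~\ref{lem:normal-quad}. By Lemma~\ref{lem:covariance_residuals}, the vector $\epsilon_{S,h}=(\epsilon_i : i\in S^h)$ is centered Gaussian with covariance $\sigma_\phi^2\,\mathbf{M}$, where $\mathbf{M}$ is the symmetric matrix indexed by $S^h$ with $\mathbf{M}_{i,i}=1$, $\mathbf{M}_{i,j}=-\phi_{i-j}$ when $1\le|i-j|_\infty\le h$, and $\mathbf{M}_{i,j}=0$ otherwise; equivalently, $\mathbf{M}$ is the restriction of the infinite matrix $\sigma_\phi^2\bGamma^{-1}(\phi)$ to the rows and columns in $S^h$. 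In particular $\mathbf{M}$ is positive semidefinite, so that $\|\epsilon_{S,h}\|^2$ has the same law as $\sigma_\phi^2\,Z^\top\mathbf{M}Z$ for a standard normal vector $Z$ of dimension $|S^h|$, and the first inequality of Lemma~\ref{lem:normal-quad} applies to $Z^\top\mathbf{M}Z$ (the diagonal of $\mathbf{M}$ being constant equal to $1$, the centering there is exactly $\tr(\mathbf{M})$).

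Next I would bound the three quantities entering that inequality. The trace is exact: $\tr(\mathbf{M})=|S^h|$. For the Frobenius norm, $\|\mathbf{M}\|_F^2=|S^h|+\sum_{i\ne j}\mathbf{M}_{i,j}^2\le|S^h|\bigl(1+\|\phi\|_2^2\bigr)\le 2|S^h|$, since for each $i$ one has $\sum_{j\ne i}\mathbf{M}_{i,j}^2\le\sum_{v\ne 0}\phi_v^2=\|\phi\|_2^2\le\|\phi\|_1^2\le 1$. For the operator norm, $\|\mathbf{M}\|\le\sup_i\sum_j|\mathbf{M}_{i,j}|\le 1+\|\phi\|_1\le 2$ (alternatively, $\|\mathbf{M}\|\le\|\sigma_\phi^2\bGamma^{-1}(\phi)\|$, which is bounded by Lemma~\ref{lem:spectrum_gamma}). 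Plugging these into Lemma~\ref{lem:normal-quad} yields, for every $t>0$,
\[
\P\left\{\|\epsilon_{S,h}\|^2\ \ge\ \sigma_\phi^2\Bigl(|S^h|+2\sqrt{2|S^h|\,t}+4t\Bigr)\right\}\ \le\ e^{-t}\ .
\]

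It then remains to choose $t$ so that the deviation term $2\sqrt{2|S^h|t}+4t$ is absorbed by the slack $|S^h|\,|\bbN_h|^{-1/2}$ present in \eqref{eq:upper_epsilon}. Taking $t=c\,|S^h|/|\bbN_h|^2$ for a sufficiently small numerical constant $c$, and using $|\bbN_h|\ge 1$, makes each of $2\sqrt{2|S^h|t}$ and $4t$ at most $\tfrac12|S^h|\,|\bbN_h|^{-1/2}$, so that $\|\epsilon_{S,h}\|^2\le\sigma_\phi^2|S^h|(1+|\bbN_h|^{-1/2})$ outside an event of probability at most $e^{-t}=\exp(-c\,|S^h|/|\bbN_h|^2)$, which is the claim. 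The argument is short, and there is no real obstacle: the only point requiring care is the constant bookkeeping, and in particular the observation that the narrow slack $|\bbN_h|^{-1/2}$ is what forces the deviation level $t$ to be of order $|S^h|/|\bbN_h|^2$, which is precisely what produces the stated failure probability. Condition \eqref{eq:condition_LS} is not used in the displayed inequality itself; it only guarantees that $|S^h|/|\bbN_h|^2$ is large, so that this probability is genuinely small and of the same order as the other error terms appearing in the proof of Theorem~\ref{thm:LS1}.
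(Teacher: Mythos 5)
Your reduction of $\|\epsilon_{S,h}\|^2$ to a Gaussian quadratic form $\sigma_\phi^2\,Z^\top\mathbf{M}Z$ with $\mathbf{M}$ the restriction of $\sigma_\phi^2\bGamma^{-1}(\phi)$ to $S^h$ is correct, as are the identities $\tr(\mathbf{M})=|S^h|$, $\|\mathbf{M}\|_F^2\le|S^h|(1+\|\phi\|_2^2)$ and $\|\mathbf{M}\|\le 1+\|\phi\|_1$, and the final choice $t=c|S^h|/|\bbN_h|^2$. The gap is in the step where you discharge these norms by writing $\|\phi\|_2^2\le\|\phi\|_1^2\le 1$ and $\|\phi\|_1\le 1$. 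Nothing in the hypotheses of the lemma gives you $\|\phi\|_1\le 1$: the parameter space $\Phi_h$ strictly contains the unit $\ell_1$-ball (it is the set where the spectral polynomial $1+\sum_{1\le|i|_\infty\le h}\phi_i\cos\langle i,\omega\rangle$ stays positive), and $\|\phi\|_1$ can be of order $|\bbN_h|$ for valid parameters. This matters because Lemma~\ref{lem:control_(IV)} is invoked in Case~2 of the proof of Theorem~\ref{thm:LS1}, precisely the regime $\|\phi\|_2^2\ge(4|\bbN_h|)^{-1}$ where no smallness of $\|\phi\|_1$ is available (only Case~1 guarantees $\|\phi\|_1\le 1/2$). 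Your fallback, bounding $\|\mathbf{M}\|$ via Lemma~\ref{lem:spectrum_gamma}, does not help either, since that lemma itself assumes $\|\phi\|_1<1$.

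The paper avoids this issue entirely by a different decomposition: it splits $S^h$ into the $|\bbN_h|+1$ sublattices $S_i$ of \eqref{S_i}, whose points are pairwise at $\ell_\infty$-distance at least $2h+1$, so that by Lemma~\ref{lem:covariance_residuals} each $\|\epsilon_{S_i}\|_2^2$ is exactly $\sigma_\phi^2\chi^2_{|S_i|}$ --- Frobenius norm $\sqrt{|S_i|}$ and operator norm $1$, with no dependence on $\|\phi\|_1$ or $\|\phi\|_2$ whatsoever --- and then applies Lemma~\ref{lem:normal-quad} blockwise with a union bound and Cauchy--Schwarz ($\sum_i\sqrt{|S_i|}\le\sqrt{|S^h|(|\bbN_h|+1)}$). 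Your global argument could in principle be repaired by first establishing a uniform bound such as $\|\phi\|_2^2\lesssim|\bbN_h|$ over all of $\Phi_h$ (the deviation terms would then still be absorbed at the level $t\asymp|S^h|/|\bbN_h|^2$), but that requires an additional argument not present in your write-up, so as it stands the proof does not cover the cases in which the lemma is actually used.
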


With these lemmas in hand, we divide the analysis into two cases depending on the value of $\|\phi\|_2^2$. For small $\|\phi\|^2_2$, the operator norm of the covariance operator $\bGamma(\phi)$ remains bounded, which simplifies some deviation inequalities. For large $\|\phi\|^2_2$, we are only able to get looser bounds which are nevertheless sufficient as in that case $\|\phi\|_2^2$ is far above the detection threshold. 

\medskip\noindent 
{\bf Case 1}: $\|\phi\|_2^2\leq (4|\bbN_h|)^{-1}$.  This implies that $\|\phi\|_1\leq 1/2$ and also that $\|\bGamma(\phi)\|\leq 2\sigma_{\phi}^2$ by Lemma \ref{lem:spectrum_gamma}. Combining \eqref{eq:deviation_I} and \eqref{eq:deviation_II-a} together with the inequality 
$2xy\leq x^2+y^2$, we derive that for any $t>0$, 
\beq\label{eq:lower_case1}
\frac{({\rm I})+ ({\rm II})}{\sigma^2_{\phi}}\geq C\left(|S^h|\|\phi\|_2^2- t\right)
\eeq
with probability larger than $1-e^{-t} -\exp\left(-C\frac{|S^h|}{(|\bbN_h|+1)}\right)$. Turning to the third term, we have
\[\frac{({\rm III})}{\sigma_{\phi}^2} \geq \lambda^{\rm max}\left(\bGamma_{\bbN_h}(\phi)^{-1/2}\frac{\bF_{S,h}^\top  \bF_{S,h}}{|S^h|}\bGamma_{\bbN_h}(\phi)^{-1/2}\right)^{-1}\frac{\|\bGamma_{\bbN_h}(\phi)^{-1/2}\bF_{S,h}^\top \epsilon_{S,h}\|_2^2}{\sigma_{\phi}^2 |S^h|}\ .\]
Let $a>0$ be a positive constant whose value we determine later. For any $t>0$, with probability larger than $1-4e^{-t}$, we have
\beqn
 \frac{({\rm III})}{\sigma_{\phi}^2} 
 &\geq& \frac{|\bbN_h|- C\left(|\bbN_h|\|\phi\|_1+ \|\bGamma^{-1}(\phi)\|\|\phi\|_2^2+ \frac{|\bbN_h|^{5/2} + |\bbN_h|^2 (\sum_{j\neq 0}\gamma_{j}^2)}{\sqrt{|S^h|}} \right)t^2}{1+
4\|\bGamma(\phi)\|\|\bGamma^{-1}(\phi)\|\frac{|\bbN_h|}{\sqrt{|S^h|}}\left(\sqrt{t}+\log(|\bbN_h|)\right)}\\
&\geq & |\bbN_h|- C\left(|\bbN_h|^{3/2}\|\phi\|_2+ \|\phi\|_2^2+ \frac{|\bbN_h|^{5/2} + |\bbN_h|^2 (\sum_{j\neq 0}\gamma_{j}^2)}{\sqrt{|S^h|}} \right)t^2 - 16\frac{|\bbN_h|}{\sqrt{|S^h|}}\left(\sqrt{t}+\log(|\bbN_h|)\right)\\
&\geq &|\bbN_h|- C\left(|\bbN_h|^{3/2}\|\phi\|_2+ \frac{|\bbN_h|^{5/2} }{\sqrt{|S^h|}} \right)t^2 - C'\frac{|\bbN_h|\log(|\bbN_h|)}{\sqrt{|S^h|}}\left(1+ t^4\right)\\
&\geq & |\bbN_h|- a |S^h|\|\phi\|^2_2 -  C\left(a^{-1}\frac{|\bbN_h|^{3}}{|S^h|}t^4 +\frac{|\bbN_h|^{5/2} }{\sqrt{|S^h|}} t^2 + \frac{|\bbN_h|\log(|\bbN_h|)}{\sqrt{|S^h|}}\left(1+ t^4\right)\right)
\\
 &\geq & |\bbN_h| - a |S^h|\|\phi\|^2_2 - C\sqrt{|\bbN_h|}\left(1+ (a^{-1}+1)t^4\right)\ .
\eeqn
Here in the first line, we used Lemma \ref{lem:control_(III)}.
In the second line, we used the fact that $(1-y)/(1+x)\geq 1-x-y$ for all $x,y \ge 0$, $\|\phi\|_1\leq \sqrt{|\bbN_h|}\|\phi\|_2$ by the Cauchy-Schwarz inequality, and $\|\bGamma(\phi)\|\vee \|\bGamma^{-1}(\phi)\|\leq 2$.
In the third line, we applied the inequality $\sum_{j\neq 0}\gamma_{j}^2\leq 4\|\phi\|_2^4+ 16\|\phi\|_2^2\leq 20$, which is a consequence of $\|\phi\|_1\leq 1/2$ and Lemma \ref{lem:conditional_variance}. The last line is a consequence of   Condition \eqref{eq:condition_LS}. Then, we take $a=C/2$ with $C$ as in \eqref{eq:lower_case1} and apply Lemma \ref{lem:control_(IV)} to control the denominator of $T_S$. This leads to
\[
\P\left\{T_S \geq C |S^h|\|\phi\|_2^2 + |\bbN_h|- C' \sqrt{\bbN_h}(1\vee t^4)\right\}\geq 1 - 4e^{-t} -2e^{-C'' \frac{|S^h|}{|\bbN_h|^2}}\ . \]
Taking $t=\log(8/\beta)$ and letting $C_2$ be small enough in \eqref{alpha_beta}, we get 
\[
\P\left\{T_S \geq C |S^h|\|\phi\|_2^2 + |\bbN_h|- C' \sqrt{\bbN_h}(1+\log^{4}(\beta^{-1}))\right\}\geq 1 - \beta\ , \]
proving \eqref{eq:upper_TS_H1} in Case 1.

\medskip\noindent 
{\bf Case 2}: $\|\phi\|_2^2\geq (4 |\bbN_h|)^{-1}$. This condition entails 
\[\frac{2\|\phi\|_2^2}{1+\|\phi\|_1}\geq  \frac{\|\phi\|_2}{\sqrt{|\bbN_h|}}\ .\]
Since the term $({\rm III})$ is non-negative, we can start from the lower bound $Z_S\geq ({\rm I})+ ({\rm II})$. 
We derive from Lemma \ref{lem:control_(I)} and the above inequality that
\beq\label{eq:lower_I-bis}
\P\left\{({\rm I})\geq |S^h|\frac{\sigma_{\phi}^2\|\phi\|_2}{4\sqrt{|\bbN_h|}} \right\}\geq 1- \exp\left(-C\frac{|S^h|}{|\bbN_h|}\right)\ .
\eeq
Taking $t=C |S^h|/|\bbN_h|^2$ in \eqref{eq:deviation_II-b} for a constant $C$ sufficiently small, and using Condition \eqref{eq:condition_LS}, we get that $({\rm II}) \ge - 3 \sqrt{C} \sigma_\phi \sqrt{|S^h|/|\bbN_h|} \sqrt{({\rm I})}$ with probability at least $1 - e^{-t}$. 
Also, $\|\phi\|_2^2 \ge (4|\bbN_h|)^{-1}$ implies that the right-hand side exceeds $- \frac12 ({\rm I})$ when the event in \eqref{eq:lower_I-bis} holds and $C$ is small enough.  Hence, we get
\[\P\left\{({\rm I}) + ({\rm II})\geq |S^h|\frac{\sigma_{\phi}^2\|\phi\|_2}{8\sqrt{|\bbN_h|}} \right\}\geq 1- 2 \exp\Big(-C\frac{|S^h|}{|\bbN_h|^2}\Big)\ .\]
Finally, we combine this bound with \eqref{eq:upper_epsilon} and the condition $\|\phi\|_2^2 \ge (4|\bbN_h|)^{-1}$, to get
\beqn
\P\left\{T_S \geq \frac{|S^h|}{32|\bbN_h|} \right\}\geq 1- 3 \exp\left(-C\frac{|S^h|}{|\bbN_h|^2}\right) \geq 1-\beta ,
\eeqn
where we used the condition on $\beta$. In view of Condition \eqref{eq:condition_LS}, we have proved \eqref{eq:upper_TS_H1}.
This concludes the proof of \thmref{LS1}.  It remains to prove the auxiliary lemmas.

\subsubsection{Proof of Lemma \ref{lem:control_(I)}}
Recall the definition of $S_i$ in \eqref{S_i}.  
Let $\bF_{S_i}$ denote the matrix with row vectors $F_j, j \in S_i$.  We have 
\[({\rm I}) = \|\bF_{S,h}\phi\|^2_2 = \sum_{i\in \bbN_h\cup\{0\}} \|\bF_{S_i}\phi\|^2_2 \ .\]
For any $u\in \bbR^{S_i}$, 
\beqn
\Var\left[\sum_{j\in S_i} u_j\bF_{S_i}\phi\right]= \Var\left[\sum_{j\in S_i}\sum_{v\in \bbN_h}u_j\phi_vY_{v+j}\right]\geq \|u\|_2^2\|\phi\|_2^2 \lambda^{\min}(\bGamma(\phi))\ ,
\eeqn
since the indices $(v+j : v\in\bbN_{h}, j\in S_i)$ are all distinct.  
Since $\lambda^{\min}(\bGamma(\phi))\geq \frac{\sigma_{\phi}^2}{1+\|\phi\|_1}$ by Lemma \ref{lem:spectrum_gamma}, 
 $\frac{1+\|\phi\|_1}{\sigma_\phi^2\|\phi\|_2^2}\|\phi^\top \bF_{S_i}\|_2^2$ is stochastically lower bounded by a $\chi^2$ distribution with $|S_i|$ degrees of freedom. By Lemma \ref{lem:normal-quad} and the union bound, we have that for any $t>0$, 
\beqn
({\rm I})&\geq& \frac{\|\phi\|_2^2}{1+\|\phi\|_1}\sigma_{\phi}^2 \sum_{i\in \bbN_h\cup\{0\}} |S_i| -2 \sqrt{|S_i|[\log(|\bbN_h|+1)+t]}\\
&\geq & \frac{\|\phi\|_2^2}{1+\|\phi\|_1}\sigma_{\phi^2 } \left(|S^h|- 2 \sqrt{|S^h|(|\bbN_h|+1)[\log(|\bbN_h|+1)+t]}\right)\ ,
\eeqn
with probability larger than $1-e^{-t}$. Finally we set  $t=\tfrac{|S^h|}{32(|\bbN_h|+1)}$  and use Condition \eqref{eq:condition_LS} to conclude.

\subsubsection{Proof of Lemma \ref{lem:control_(II)}}
We first prove \eqref{eq:deviation_II-a}. Denote by $\tilde{\bSigma}_{\phi,S}$ the covariance matrix of the random vector  $(\epsilon^\top _{\phi,S},\phi^\top  \bF_{S,h}^\top )$ of size $2|S^h|$. 
Let $\bR$ be the block matrix defined by 
\[\bR=\begin{pmatrix}  0 &\bI_{S^h} \\ \bI_{S^h} & 0    \end{pmatrix}.\] 
Letting $Z$ be a standard Gaussian vector of size $2|S^h|$, we have
$2\phi^\top  \bF_{S,h}^\top \epsilon_{S,h}\sim Z^\top  \tilde{\bSigma}^{1/2}_{\phi,S}\bR  \tilde{\bSigma}^{1/2}_{\phi,S}Z \ .$
From Lemma \ref{lem:normal-quad} we get that for all $t>0$, with probability at least $1-e^{-t}$,
\begin{eqnarray}
2\phi^\top  \bF_{S,h}^\top \epsilon_{S,h}& \le& \tr[\tilde{\bSigma}^{1/2}_{\phi,S}\bR\tilde{\bSigma}^{1/2}_{\phi,S}]- 2\|\tilde{\bSigma}^{1/2}_{\phi,S}\bR\tilde{\bSigma}^{1/2}_{\phi,S}\|_F\sqrt{t}- 2 \|\tilde{\bSigma}^{1/2}_{\phi,S}\bR\tilde{\bSigma}^{1/2}_{\phi,S}\|t\ , \notag \\
&\le & - 2\|\tilde{\bSigma}^{1/2}_{\phi,S}\bR\tilde{\bSigma}^{1/2}_{\phi,S}\|_F\sqrt{t} - 2 \|\tilde{\bSigma}_{\phi,S}\| t\ , \label{phi_F_eps}
\end{eqnarray}
where we used the fact that $\tr[\tilde{\bSigma}^{1/2}_{\phi,S}\bR\tilde{\bSigma}^{1/2}_{\phi,S}] = \E[\phi^\top  \bF_{S,h}^\top  \epsilon_{S,h}]=0$ and that $\|\bR\| = 1$. In order to bound the Frobenius norm above, we start from the identity
\[
\|\tilde{\bSigma}^{1/2}_{\phi,S}\bR\tilde{\bSigma}^{1/2}_{\phi,S}\|^2_F = \Var[2\phi^\top  \bF_{S,h}^\top  \epsilon_{S,h}] = \E\left[(2 \phi^\top  \bF_{S,h}^\top  \epsilon_{S,h})^2\right] = 4 \sum_{i,j\in S^h}\E[\epsilon_i\epsilon_j (\phi^\top F_i)(\phi^\top F_j)] \ ,
\]
with $\eps_i$ being the $i$th component of $\eps_{S,h}$.
For $i=j$, the expectation of the right-hand side is $\sigma_{\phi}^2\E[(\phi^\top F_i)^2]$, while if the distance between $i$ and $j$ is larger than $h$, then $\epsilon_i$ and $(\epsilon_j,F_i,F_j)$ are independent and the expectation of the right-hand side is zero. If $1 \le |i-j|\leq h$, then we use Isserlis' theorem, together with the fact that $\eps_i \perp F_i$, to obtain
\beqn
|\E[\epsilon_i\epsilon_j (\phi^\top F_i)(\phi^\top F_j)]|&=& \big|\E[\epsilon_i\epsilon_j]\E[(\phi^\top F_i)(\phi^\top F_j)]+ \E[\epsilon_i\phi^\top F_j]E[\epsilon_j\phi^\top F_i]\big|\\
& \leq  & \sigma_{\phi}^2|\phi_{i-j}|\E[(\phi^\top F_i)^2]+ \phi^2_{j-i}\sigma_{\phi}^2\ .
\eeqn
Putting all the terms together, we obtain
\beqn
\|\tilde{\bSigma}^{1/2}_{\phi,S}\bR\tilde{\bSigma}^{1/2}_{\phi,S}\|^2_F
&\leq & 4 |S^h|\sigma_{\phi}^2 \Big\{ \E[(\phi^\top F_i)^2] (1 +\|\phi\|_1) + \|\phi\|_2^2 \Big\} \\
&\leq & 4 \sigma_{\phi}^2|S^h|\|\phi\|_2^2 \|\bGamma(\phi)\|(2+\|\phi\|_1)\ ,
\eeqn
using the fact that $\|\bGamma(\phi)\| \ge 1$.

Turning to $\|\tilde{\bSigma}_{\phi,S}\|$, denote $\bGamma(\phi)^{\epsilon}$ the covariance of the process $(\epsilon_i,\ i\in \bbZ^d)$. By Lemma \ref{lem:covariance_residuals},  
$(\bGamma(\phi)^{\epsilon})_{i,j}=\left[- \phi_{i-j}+ \1_{i=j}\right]\sigma_{\phi}^2$, and  it follows that $\|\bGamma(\phi)^{\epsilon}\|\leq (1+\|\phi\|_1)\sigma_{\phi}^2$.
Then, for all vectors $u,v \in \bbR^{S^h}$,
\beqn
\Var\left(\sum_{i\in S^h} u_i\phi^\top  F_i + \sum_{i\in S^h} v_i\epsilon_i\right)&= & \Var\left(\sum_{i\in S^h} u_iY_i + \sum_{i\in S^h} (v_i-u_i)\epsilon_i\right)\\
&\leq & 2 \Var\left(\sum_{i\in S^h} u_iY_i \right)+ 2\Var\left(\sum_{i\in S^h} (v_i-u_i)\epsilon_i\right)\\
&\leq & 2\|u\|_2^2 \|\bGamma(\phi)\| + 2 \|u-v\|_2^2 \|\bGamma(\phi)^{\epsilon}\|\\
&\leq & 6\left(\|u\|^2_2+ \|v\|_2^2\right) \left[\|\bGamma(\phi)\|\vee \|\bGamma(\phi)^{\epsilon}\|\right]\ .
\eeqn 
Consequently, $\|\tilde{\bSigma}_{\phi,S}\|\leq 6[\|\bGamma(\phi)\|\vee \|\bGamma(\phi)^{\epsilon}\|]\leq 6[\|\bGamma(\phi)\|\vee (1+\|\phi\|_1)\sigma_{\phi}^2]$.

We conclude that \eqref{eq:deviation_II-a} holds by virtue of the two bounds we obtained for the two terms in \eqref{phi_F_eps}.

\medskip
Turning to \eqref{eq:deviation_II-b}, we decompose $({\rm II})$ into  $2 \sum_{i\in \bbN_h\cup\{0\}}\phi^\top  \bF_{S_i}^\top  \epsilon_{S_i}$.
For any $j_1\neq j_2\in S_i$, $|j_1-j_2|_{\infty}\geq 2h+1$ and therefore $\epsilon_{j_1}$ is  independent of $(Y_{j_2+v},\ v\in\bbN_h\cup\{0\})$. Since $\epsilon_{j_2}$ and $F_{j_2} \phi$ are linear combinations of this collection, we conclude that $\epsilon_{j_1}\perp (\epsilon_{j_2}^\top , \phi^\top F_{j_2}^\top )$.
Consequently, $\epsilon_{S_i}/\sigma_\phi$ follows a standard normal distribution and is independent of $\bF_{S_i} \phi$. By conditioning on $\bF_{S_i} \phi$ and applying a standard Gaussian concentration inequality, we get 
\[\P\Big\{|\phi^\top \bF_{S_i}^\top \epsilon_{S_i}| \leq \sigma_\phi\|\bF_{S_i} \phi\|_2\sqrt{2t} \Big\} \leq e^{-t}\ ,\] 
for any $t>0$. We then take a union bound over all $i\in \bbN_h\cup\{0\}$. For any $t>0$, 
\beqn
({\rm II}) &\geq & -2\sqrt{2} \sigma_{\phi}\sqrt{\log(|\bbN_h|+1)+ t} \sum_{i\in \bbN_h\cup\{0\}}\|\phi^\top \bF_{S_i}\|_2\\ 
&\ge & -2\sqrt{2} \sigma_{\phi}\sqrt{\log(|\bbN_h|+1)+ t} \sqrt{|\bbN_h|+1}\|\bF_{S,h}\phi\|_2\ ,
\eeqn
with probability larger than $1-e^{-t}$.

\subsubsection{Proof of Lemma \ref{lem:control_(III)}}

\noindent{\bf Proof of \eqref{eq:deviation_wisharts}.}
Fix $(v_1,v_2)\in \bbN_h$ and consider the random variable 
\[(\bF_{S,h}^\top  \bF_{S,h})_{v_1,v_2} = \sum_{i\in S^h}Y_{i+v_1}Y_{i+v_2} = Y_{S}^\top \bR Y_S = V^\top \bGamma_S(\phi)^{1/2}\bR \bGamma^{1/2}_{S}(\phi)V\ ,\] 
which constitutes a definition for the symmetric matrix $\bR$, and $V\sim \cN(0,\bI_S)$. Observe that $\|\bR\|_F^2=|S^h|$ and $\|\bR\|\leq 1$ as the $l_1$ norm of each row of $\bR$ is smaller than one.
We derive from Lemma \ref{lem:normal-quad}, and the fact that $\|\bGamma_S(\phi)^{1/2}\bR \bGamma^{1/2}_{S}(\phi)\|_F^2\leq \|\bR\|_F^2\|\bGamma^{1/2}_{S}(\phi)\|^4\leq |S^h|\|\bGamma(\phi)\|^2$ and $\|\bGamma_S(\phi)^{1/2}\bR \bGamma^{1/2}_{S}(\phi)\|\leq \|\bR\|\|\bGamma_S(\phi)\|\leq \|\bGamma(\phi)\|$, that for any $t>0$,
\[\P\left\{\big|(\bF_{S,h}^\top  \bF_{S,h})_{v_1,v_2}- |S^h|\gamma_{v_1,v_2}\big|\leq 2\|\bGamma(\phi)\|\sqrt{|S^h|t}+ 2\|\bGamma(\phi)\|t \right\}\leq 2e^{-t}~.\]
Then we bound the $\ell_2$ operator norm of $|S^h|^{-1}\bF_{S,h}^\top  \bF_{S,h}- \bGamma_{\bbN_h}(\phi)$ by its $\ell_1$ operator norm and combine the above deviation inequality with a union bound over all $(v_1,v_2)\in \bbN_h$. 
Thus, for any $t\leq |S^h|$, 
\beqn
\left\|\frac{\bF_{S,h}^\top  \bF_{S,h}}{|S^h|}- \bGamma_{\bbN_h}(\phi)\right\|&\leq& \sup_{v_1\in \bbN_h}\sum_{v_2\in \bbN_h}\Bigg|\frac{(\bF_{S,h}^\top  \bF_{S,h})_{v_1,v_2}}{|S^h|}- \gamma_{v_1,v_2}\Bigg|\\
&\leq & 2\|\bGamma(\phi)\|\frac{|\bbN_h|}{|S^h|^{1/2}}\left( \sqrt{\log |\bbN_h| + t}+ \frac{\log |\bbN_h| + t}{\sqrt{|S^h|}}\right) \\
&\leq & 4\|\bGamma(\phi)\|\frac{|\bbN_h|}{|S^h|^{1/2}}\left(\sqrt{t}+ \log(|\bbN_h|)\right) \ ,
\eeqn
with probability larger than $1-2e^{-t}$. Hence, under this event,
\[\lambda^{\rm max}\left(\bGamma_{\bbN_h}(\phi)^{-1/2}\frac{\bF_{S,h}^\top  \bF_{S,h}}{|S^h|}\bGamma_{\bbN_h}(\phi)^{-1/2}\right)\leq 1+  4\|\bGamma(\phi)\|\|\bGamma^{-1}(\phi)\|\frac{|\bbN_h|}{|S^h|^{1/2}}\left(\sqrt{t}+\log(|\bbN_h|)\right)\ ,\]
since $\|\bGamma_{\bbN_h}(\phi)^{-1}\|\leq \|\bGamma^{-1}(\phi)\|$.
This concludes the proof of \eqref{eq:deviation_wisharts}.

\medskip\noindent{\bf Proof of \eqref{eq:control_bSigma}.} 
Turning to the second deviation bound, we use the following decomposition 
\[
\|\bGamma_{\bbN_h}(\phi)^{-1/2}\bF_{S,h}^\top  \epsilon_{S,h}\|_2^2= \sum_{i\in S^h} \epsilon_{i}^2\|\bGamma_{\bbN_h}(\phi)^{-1/2}F_{i}\|_2^2+ \sum_{(i,j),\ i\neq j } \epsilon_i\epsilon_j F_j^\top  \bGamma_{\bbN_h}(\phi)^{-1}F_i =: A+ B\ ,
\]
with $\eps_i$ being the $i$th entry of $\eps_{S,h}$.
Since both $A$ and $B$ are Gaussian chaos variables of order 4, we apply Lemma \ref{lem:chaos_4} to control their deviations. For any $t>0$, 
\beq\label{eq:decomposition_A+B}
\P\left\{A+B\geq \E[A+B] - C\left(\Var^{1/2}(A)+ \Var^{1/2}(B)\right)t^2 \right\}\leq 2e^{-t}\ ,
\eeq
using the fact that $\Var^{1/2}(A+B) \le \Var^{1/2}(A)+ \Var^{1/2}(B)$.
Thus, it suffices to compute the expectation and variance of $A$ and $B$. 

First, we have $\E[A]= |S^h||\bbN_h|\sigma_{\phi}^2$, by independence of $\eps_i$ and $F_i$, and from this we get
\beqn
\Var(A)&=&\sum_{i,j \in S^h}\Big(\E\left[\epsilon_{i}^2\epsilon_{j}^2 \|\bGamma_{\bbN_h}(\phi)^{-1/2}F_{i}\|_2^2\|\bGamma_{\bbN_h}(\phi)^{-1/2}F_{j}\|_2^2\right]- \sigma^4_{\phi}|\bbN_h|^2 \Big) =: \sum_{i,j \in S^h}A_{i,j}\ . 
\eeqn
If  $|i-j|_{\infty}\leq h$, we may use the Cauchy-Schwarz inequality to get
\beqn
|A_{i,j}|\leq \E\left[\epsilon_{i}^4  \|\bGamma_{\bbN_h}(\phi)^{-1/2}F_{i}\|_2^4\right]= 3\sigma_{\phi}^4|\bbN_h|(|\bbN_{h}|+2)\ ,
\eeqn
again by independence of $\epsilon_i$ and $\|\bGamma_{\bbN_h}(\phi)^{-1/2}F_{i}\|_2^2$. 
If $|i-j|_{\infty}> h$, then $\epsilon_i$ is independent of $(F_i,F_j,\epsilon_j)$ and $\epsilon_j$ is independent of $(F_i,F_j,\epsilon_i)$, so we get
\beqn
\frac{A_{i,j}}{\sigma_\phi^4}
&=& \E \left[\|\bGamma_{\bbN_h}(\phi)^{-1/2}F_{i}\|_2^2\|\bGamma_{\bbN_h}(\phi)^{-1/2}F_{j}\|_2^2\right]- |\bbN_h|^2\\
& = & \hspace{-0.2in} \sum_{v_1,v_2,v_3,v_4\in \bbN_h} \hspace{-0.2in} (\bGamma_{\bbN_h}(\phi)^{-1})_{v_1,v_2}(\bGamma_{\bbN_h}(\phi)^{-1})_{v_3,v_4}\left[\gamma_{v_1-v_2}\gamma_{v3-v_4}+ \gamma_{i+v_1-j-v_3}\gamma_{i+v2-j-v_4}+ \gamma_{i+v_1-j-v_4}\gamma_{i+v3-j-v_2}\right]\\ & &- |\bbN_h|^2\ \\
& =&  \hspace{-0.2in} \sum_{v_1,v_2,v_3,v_4\in \bbN_h} (\bGamma_{\bbN_h}(\phi)^{-1})_{v_1,v_2}(\bGamma_{\bbN_h}(\phi)^{-1})_{v_3,v_4}\left[ \gamma_{i+v_1-j-v_2}\gamma_{i+v3-j-v_4}+ \gamma_{i+v_1-j-v_4}\gamma_{i+v3-j-v_2}\right]
\eeqn
where we apply Isserlis' theorem in the second  line and use the definition of  $\bGamma_{\bbN_h}(\phi)$ in the last line. 
By symmetry, we get
\beqn
\frac{|A_{i,j}|}{\sigma_\phi^4}&\leq& 2\|\bGamma_{\bbN_h}(\phi)^{-1}\|_{\infty}^2\sum_{v_1,v_2,v_3,v_4\in \bbN_h}|\gamma_{i+v_1-j-v_3}\gamma_{i+v2-j-v_4}|\\
&\leq & 2\|\bGamma_{\bbN_h}(\phi)^{-1}\|^2|\bbN_h| \sum_{v_1,v_2\in \bbN_h}\gamma^2_{i-j+v_1-v_2}\\
&\leq & 2\|\bGamma^{-1}(\phi)\|^2 |\bbN_h|^2 \sum_{v\in \bbN_{2h}}\gamma^2_{i-j+v}\ ,
\eeqn
 using the Cauchy-Schwarz inequality in the second line. Here $\|\bA\|_{\infty}$ denotes the supremum norm of the entries of $\bA$.
Then, summing over all $j$ lying at a distance larger than $h$ from $i$,
\beqn
\sum_{j\in S^h, \, |j-i|_{\infty}>h}\frac{|A_{i,j}|}{\sigma_\phi^4} &\leq&  2\|\bGamma^{-1}(\phi)\|^2 |\bbN_h|^2  \sum_{j\in S^h,\,  |j-i|_{\infty}>h}\ \sum_{v\in \bbN_{2h}}\gamma^2_{i-j+v}\\
&\leq &  2^{d+1}\|\bGamma^{-1}(\phi)\|^2 |\bbN_h|^3 \sum_{j\in\mathbb{Z}^d\setminus\{0\}}\gamma_j^{2}\ .
\eeqn
Putting the terms together, we conclude that
\beq\label{eq:var_A}
\Var(A)\leq \sigma_{\phi}^4|S^h||\bbN_h|^3\left(6+ 2^{d+1}\sum_{j\in\mathbb{Z}^d\setminus\{0\}}\gamma_j^{2}\right)\ .
\eeq

Next we bound the first two moments of $B$. 
Consider $(i,j)\in S^h$ such that  $|i-j|_{\infty}>h$.  Then $\E\big[\epsilon_i\epsilon_j F_j^\top  \bGamma_{\bbN_h}(\phi)^{-1}F_i\big] = 0$ by independence of $\epsilon_i$ with the other variables in the expectation. 
Suppose now that $|i-j|_{\infty}\leq h$. By Isserlis' theorem, and the independence of $\eps_i$ and $F_i$, as well as $\eps_j$ and $F_j$, and symmetry, to get
\beqn
\E\left[\epsilon_i\epsilon_j F_j^\top  \bGamma_{\bbN_h}(\phi)^{-1}F_i\right]&=& \E\left[\epsilon_iF_j^{T}\right]\bGamma_{\bbN_h}(\phi)^{-1}\E\left[F_i\epsilon_j\right]+ \E\left[\epsilon_i\epsilon_j\right]\E\left[F_j^\top  \bGamma_{\bbN_h}(\phi)^{-1}F_i\right]\\
&\geq & - \sigma^4_{\phi}|\phi_{i-j}|^2\|\bGamma_{\bbN_h}(\phi)^{-1}\| -\sigma_{\phi}^2|\phi_{i-j}| |\bbN_h|\ ,
\eeqn
using the Cauchy-Schwarz inequality and Lemma \ref{lem:covariance_residuals}.
As a consequence, 
\beq\label{eq:E_B}
\E[B] \geq  -\sigma_{\phi}^4|S^h|\|\phi\|_2^2\|\bGamma^{-1}(\phi)\| - \sigma_{\phi}^2|S^h||\bbN_h|\|\phi\|_1\ .
\eeq
Turning to the variance, we obtain
\[\Var(B)\leq \E[B^2] = \sum_{i_1\neq i_2} \sum_{i_3\neq i_4 }\E[V_{i_1,i_2,i_3,i_4}]\ ,\]
where
\[V_{i_1,i_2,i_3,i_4} := \epsilon_{i_1}\epsilon_{i_2}\epsilon_{i_3}\epsilon_{i_4} F_{i_1}^\top  \bGamma_{\bbN_h}(\phi)^{-1}F_{i_2} F_{i_3}^\top  \bGamma_{\bbN_h}(\phi)^{-1}F_{i_4}\ .\]
Fix $i_1$. If one index among $(i_1,i_2,i_3,i_4)$ lies at a distance larger than $h$ from the three others, then the expectation of $V_{i_1,i_2,i_3,i_4}$ is equal to zero. If one index lies within distance $h$ of $i_1$ and the two remaining indices lie within distance $3h$ of $i_1$, we use the Cauchy-Schwarz inequality to get
\[\E[V_{i_1,i_2,i_3,i_4}]
\le \E\Big[\prod_{k=1}^4 |\epsilon_{i_k}| (F_{i_k}^\top \bGamma_{\bbN_h}(\phi)^{-1}F_{i_k})^{1/2}\Big] \\
\leq \E[\epsilon_1^4(F_1^\top \bGamma_{\bbN_h}(\phi)^{-1}F_1)^2]= 3\sigma_{\phi}^4|\bbN_h|(|\bbN_h|+2)\ .
\]
Finally, if say $|i_1-i_2|_{\infty}\leq h$ and $|i_3-i_4|_{\infty}\leq h$ and $|i_k - i_\ell| > h$ for $k = 1,2$ and $\ell = 3,4$,
then we use again Isserlis' theorem and simplify the terms to get
\beqn
\E[V_{i_1,i_2,i_3,i_4}]
&=& \E[\epsilon_{i_1}\epsilon_{i_2}]\E[\epsilon_{i_3}\epsilon_{i_4}]\E[F_{i_1}^\top \bGamma_{\bbN_h}(\phi)^{-1}F_{i_2}F_{i_3}^\top \bGamma_{\bbN_h}(\phi)^{-1}F_{i_4}]\\ 
&+& \E[\epsilon_{i_2}F^\top _{i_1}]\bGamma_{\bbN_h}(\phi)^{-1}\E[F_{i_2}\epsilon_{i_1}] \E[\epsilon_{i_4}F^\top _{i_3}]\bGamma_{\bbN_h}(\phi)^{-1}\E[F_{i_4}\epsilon_{i_3}]\\
&\leq & \sigma_{\phi}^4|\phi_{i_2-i_1}\phi_{i_4-i_3}||\bbN_h|(|\bbN_h|+2)+ \|\bGamma^{-1}(\phi)\|^2\sigma_{\phi}^8 \phi_{i_2-i_1}^2\phi_{i_4-i_3}^2 \ ,
\eeqn
where we used again Lemma \ref{lem:covariance_residuals} to control the terms involving $\epsilon$'s and the Cauchy-Schwarz inequality to bound the term in $(F_{i_k}, k=1,\ldots,4)$.
Putting all the terms together, we conclude that
\beq\label{eq:var_B}
\Var(B)\leq C \sigma_{\phi}^4\left(|S^h||\bbN_h|^5 + |S^h|^2\|\phi\|_1^2|\bbN_h|^2+ |S^h|^2\sigma_{\phi}^2\|\phi\|_2^4 \|\bGamma^{-1}(\phi)\|^2\right)\ ,
\eeq
since $\sigma_{\phi}^2\leq \Var[Y_i]= 1$. 

Plugging in the bounds that we obtained for the moments of $A$ and $B$ in \eqref{eq:decomposition_A+B}, we conclude the proof of \eqref{eq:control_bSigma}.

\subsubsection{Proof of Lemma \ref{lem:control_(IV)}}
Recall the definition of $S_i$ in \eqref{S_i}.  
We decompose $\|\epsilon_{S,h}\|_2^2= \sum_{i\in \bbN_h\cup\{0\}}\|\epsilon_{S_i}\|^2_2$ and note that $\|\epsilon_{S_i}\|^2_2 \sim \sigma_\phi^2 \chi^2_{|S_i|}$. 
Applying the second deviation bound of Lemma~\ref{lem:normal-quad} together with a union bound, we obtain that for any $t>0$, 
\beqn
\|\epsilon_{S,h}\|_2^2 &\leq& \sigma_{\phi}^2 \sum_{i\in \bbN_h\cup \{0\}} \Big(|S_i|+ 2\sqrt{|S_i|\log(|\bbN_h|+1)+ t}+ 2 t+ 2\log(|\bbN_h|+1) \Big)\\
&\leq & \sigma_{\phi}^2\left(|S^h| + 2\sqrt{|S^h|(|\bbN_h|+1) \log(|\bbN_h|+1)+ t) }+ 2|\bbN_h|\left(t+ \log(|\bbN_h|+1)\right)\right)\ ,
\eeqn
with probability larger $1-e^{-t}$. Relying on Condition \eqref{eq:condition_LS}, we derive that 
\beqn
\P\left\{\|\epsilon_{S,h}^2\|\leq \sigma_{\phi}^2|S^h|(1+ |\bbN_h|^{-1/2})\right\} \geq 1-\exp\left(-C \frac{|S^h|}{|\bbN_h|^2}\right)\ ,
\eeqn
for a numerical constant $C>0$ small enough.

\subsection{Proof of Corollary \ref{cor:AR}}
It is well known---see, e.g., \cite{MR1419991}---that any $\ar_h$ process is also a Gaussian Markov random field with neighborhood radius $h$ (and vice-versa). 
Denote $\tau_\psi^2$ the innovation variance  of an $\ar_h(\psi)$ process. 
The bijection between the parameterizations $(\psi,\tau_\psi^2)$ and $(\phi,\sigma_{\phi}^2)$ is given by the following equations
\begin{eqnarray}
\phi_{-i}~=~\phi_i&=&\frac{\psi_{i}- \sum_{k=i+1}^h \psi_k \psi_{k-i}}{1+ \|\psi\|_2^2}~,\quad \quad \text{for $i=1,\ldots, h$ ,} \label{eq:phi_psi_ar} \\
\sigma^2_{\phi}&=&  \frac{\tau_\psi^2}{1+\|\psi\|_2^2} \ .\label{eq:variance_sigma_ar}
\end{eqnarray}
This correspondence is maintained below.

\noindent 
{\bf Lower bound}. In this proof, $C$ is a positive constant that may vary from line to line. 
It follows from the above  equations that 
\[\|\phi\|_2^2 \leq C\frac{\|\psi\|_2^2 + h\|\psi\|_2^4}{1+\|\psi\|_2^2} \ .\]
Consider any $r\leq 1/h$. 
In that case, if $\|\phi\|_2\geq r$ then the inequality above implies that $\|\psi\|_2\geq Cr$, and as a consequence, $R^*_{\cC,\mathfrak{G}(h,r)}\leq R^*_{\cC,\mathfrak{F}(h,Cr)}$.  Therefore, since \eqref{AR1} and our condition on $h$ together imply that $r \le 1/h$ eventually, it suffices to prove that $R^*_{\cC,\mathfrak{G}(h,r)} \to 1$.
For that, we apply Corollary~ \ref{cor:lower_hypercube}.  Condition \eqref{eq:condition_neigbhorhood} there is satisfied eventually under our assumptions (\eqref{AR1} and our condition on $h$).
Consequently, we have $R^*_{\cC,\mathfrak{G}(h,r)}\to 1$ as soon as \eqref{eq:powerless_gmrf} holds, which is the case when \eqref{AR1} holds.

\medskip 
\noindent 
{\bf Upper bound}. It follows from \eqref{eq:variance_sigma_ar} and the inequality $\tau_\psi^2\leq 1$ that 
\[1- \sigma^2_{\phi}\geq \frac{\|\psi\|_2^2}{1+\|\psi\|_2^2}\ . 
\]
Denoting $u_n:=\log(n)/k + \sqrt{h\log(n)}/k$, observe as above that $u_n\ll 1/h$ by our assumption on $h$. 

Assume that $\|\psi\|_2^2\geq r^2$ for some  $r^2\geq u_n$. 
If $\|\phi\|_1\leq 1/2$, it follows from the inequality $1-\sigma^2_{\phi}\leq \|\phi\|_2^2/(1-\|\phi\|_1)\leq 2\|\phi\|^2_2$ (Lemma \ref{lem:spectrum_gamma}) that $\|\phi\|_2^2\geq r^2/4$. 
And if $\|\phi\|_1> 1/2$, then $\|\phi\|_2^2 \geq (8h)^{-1}$ by the Cauchy-Schwarz inequality.
Thus, when $r^2 \le 1/h$, we have $\|\phi\|_2^2\geq r^2/8$, and this implies
\[R_{\cC,\mathfrak{F}(h,r)}(f)\leq R_{\cC,\mathfrak{G}(h,r/\sqrt{8})}(f)~, \quad \text{for any test $f$.}\] 
When $r^2\geq 1/h$, we simply use a monotonicity argument 
\[R_{\cC,\mathfrak{F}(h,r)}(f)\leq R_{\cC,\mathfrak{F}(h,h^{-1/2})}(f)\leq R_{\cC,\mathfrak{G}(h,1/\sqrt{8h})}(f)~, \quad \text{for any test $f$.}\] 
The result then follows from  Theorem \ref{thm:LS1}. 

\subsection*{Acknowledgements}

This work was partially supported by the US National Science Foundation (DMS-1223137, DMS-1120888) and the French Agence Nationale de la Recherche (ANR 2011 BS01 010 01 projet Calibration).
The third author was supported by the Spanish Ministry of Science and Technology grant MTM2012-37195.

\bibliographystyle{chicago}
\bibliography{ref}

\end{document}